\documentclass[11pt,twoside]{amsart}

\pdfoutput=1
\usepackage[english]{babel}
\usepackage{csquotes}
\usepackage{amsfonts}
\usepackage{hyperref}
\usepackage{etoolbox}
\usepackage{amsmath}
\usepackage{amssymb}
\usepackage{amsthm}
\usepackage{stmaryrd}
\usepackage{dsfont}
\usepackage{pifont}
\usepackage{caption}
\usepackage{subcaption}
\usepackage{mathrsfs}
\usepackage{graphicx}
\usepackage{enumerate}
\usepackage[all]{xy}
\usepackage{enumitem}
\usepackage{geometry}
\usepackage{amsfonts}
\usepackage{fancyhdr}
\usepackage{calc}
\usepackage{cancel}
\usepackage{accents}
\usepackage{abraces}
\usepackage{amsaddr}
\usepackage{orcidlink}
\usepackage{tikz,tikz-3dplot}
\usetikzlibrary{decorations.markings}
\usetikzlibrary{shapes,arrows,plotmarks}
\usepackage[backend=biber,style=alphabetic,sorting=nty]{biblatex}
%\addbibresource{references.bib}

\begin{filecontents*}{bibliography.bib}
@BOOK{bourbaki456,
  AUTHOR = {Bourbaki, N.},
  PUBLISHER = {Springer},
  DATE = {2002},
  SERIES = {Elements of Mathematics},
  TITLE = {Lie groups and Lie algebras, chapters 4, 5, 6},
}

@THESIS{manuscript,
  AUTHOR = {Garnier, A.},
  INSTITUTION = {Université de Picardie Jules Verne},
  LOCATION = {Amiens, France},
  DATE = {2021},
  TITLE = {Equivariant cellular models in Lie theory},
  TYPE = {phdthesis},
}

@BOOK{humlie,
  AUTHOR = {Humphreys, J. E.},
  PUBLISHER = {Springer},
  DATE = {1972},
  SERIES = {Graduate texts in Mathematics},
  TITLE = {Introduction to Lie algebras and representation theory},
}

@BOOK{humphreys-reflectiongroups,
  AUTHOR = {Humphreys, J. E.},
  PUBLISHER = {Cambridge University Press},
  DATE = {1992},
  SERIES = {Cambridge studies in advanced Mathematics},
  TITLE = {Reflection groups and Coxeter groups},
}

@BOOK{kane,
  AUTHOR = {Kane, R.},
  PUBLISHER = {Springer US},
  DATE = {2001},
  SERIES = {CMS books in Mathematics},
  TITLE = {Reflection groups and invariant theory},
}

@MISC{kirillovandjunior,
  AUTHOR = {Kirillov, A. and Jr, A. Kirillov},
  DATE = {2005},
  EPRINT = {math/0506118},
  EPRINTCLASS = {math.RT},
  EPRINTTYPE = {arXiv},
  TITLE = {Compact groups and their representations},
}

@ARTICLE{komrakov-premet,
  AUTHOR = {Komrakov, B. P. and Premet, A. A.},
  DATE = {1984},
  JOURNALTITLE = {Vestsi Akad. Navuk BSSR Ser. Fiz.-Mat. Navuk},
  PAGES = {18--22},
  TITLE = {The fundamental domain of an extended affine Weyl group (in Russian)},
  VOLUME = {3},
}

@BOOK{malle-testerman,
  AUTHOR = {Malle, G. and Testerman, D.},
  PUBLISHER = {Cambridge University Press},
  DATE = {2011},
  SERIES = {Cambridge studies in advanced mathematics},
  TITLE = {Linear algebraic groups and finite groups of Lie type},
}

@BOOK{ziegler-poly,
  AUTHOR = {Ziegler, G. M.},
  PUBLISHER = {Springer},
  DATE = {1995},
  TITLE = {Lectures on polytopes},
}
@article{hm,
	author={J. Hrivn\'{a}k and L. Motlochov\'{a}},
	title={Dual-root lattice discretization of Weyl orbit functions},
	journal={J. Fourier Anal. Appl.},
	volume={25},
	pages={2521--2569},
	year={2019},
}
\end{filecontents*}
\addbibresource{bibliography.bib}

\definecolor{cof}{RGB}{219,144,71}
\definecolor{pur}{RGB}{186,146,162}
\definecolor{greeo}{RGB}{91,173,69}
\definecolor{greet}{RGB}{52,111,72}

\tdplotsetmaincoords{70}{165}

\newcommand{\changefont}{%
    \fontsize{8}{8}\selectfont
}
\pagestyle{fancy}
\fancyhead{}
\fancyfoot{}

\fancyhead[CE]{\changefont \leftmark}
\fancyhead[CO]{\changefont \rightmark}
\fancyfoot[LE,RO]{\changefont \thepage}

\newcommand{\inabove}{\rotatebox[origin=c]{90}{$\in$}}

%\mathchardef\mhyphen="2D 

%\makeatletter
%\def\BState{\State\hskip-\ALG@thistlm}
%\makeatother

\title[Fundamental polytopes for Weyl groups on maximal tori of compact Lie groups]{Fundamental polytope for the Weyl group acting on a maximal torus of a compact Lie group}%\title{Fundamental domains for extended affine Weyl groups}

%\author{Arthur Garnier}
\author{Arthur Garnier \orcidlink{0000-0003-4069-3203}}
\address{LAMFA, Universit\'e de Picardie Jules Verne, CNRS UMR 7352, \\33, rue Saint-Leu, 80000, Amiens, France.}
\email{arthur.garnier@math.cnrs.fr}

\theoremstyle{plain}
\newtheorem{prop}{Proposition}[subsection]
\newtheorem{prop-def}[prop]{Proposition-Definition}

\newtheorem{lem}[prop]{Lemma}
\newtheorem{theo}[prop]{Theorem}
\newtheorem{cor}[prop]{Corollary}
\newtheorem{rem}[prop]{Remark}
\newtheorem{definition}[prop]{Definition}
\newtheorem{exemple}[prop]{Example}

\newtheorem*{prop*}{Proposition}
\newtheorem*{prop-def*}{Proposition-Definition}
\newtheorem*{propri*}{Property}
\newtheorem*{lem*}{Lemma}
\newtheorem*{theo*}{Theorem}
\newtheorem*{cor*}{Corollary}
\newtheorem*{rem*}{Remark}
\newtheorem*{definition*}{Definition}
\newtheorem*{exemple*}{Example}
\newtheorem*{notation*}{Notation}

\newcommand{\lra}{\longrightarrow}

\newcommand{\ra}{\rightarrow}
\newcommand{\sdp}{\times\kern-.2em\vrule height1.1ex depth-.05ex}
\newcommand{\epi}{\lra \kern-.8em\ra}
\newcommand{\C}{{\mathbb C}}

\newcommand{\R}{{\mathbb R}}

\newcommand{\Z}{{\mathbb Z}}

\newcommand{\codim}{\mathrm{codim}\,}

\newcommand{\ima}{\mathrm{im}\,}

\newcommand{\ho}{\mathrm{Hom}\,}
\newcommand{\pr}{\mathrm{pr}\,}

\newcommand{\longto}{\longrightarrow}

\newcommand{\Sph}{\mathbb{S}}

\newcommand{\rk}{\mathrm{rk}\,}

\newcommand{\lie}{\mathrm{Lie}\,}
\newcommand{\Sym}{\mathfrak{S}}

\newcommand{\Lie}[1]{\mathfrak{#1}}

\DeclareMathOperator\conv{conv}
\DeclareMathOperator\vertices{vert}

\setlength{\textwidth}{15.3cm} \setlength{\textheight}{24.2cm}
\setlength{\topmargin}{-1.5cm} \setlength{\oddsidemargin}{-1mm}
\setlength{\evensidemargin}{-1mm}
\setlength{\abovedisplayskip}{3mm}
\setlength{\belowdisplayskip}{3mm}
\setlength{\abovedisplayshortskip}{0mm}
\setlength{\belowdisplayshortskip}{2mm} \normalbaselines
\raggedbottom

\makeatletter
\newlength\@SizeOfCirc%
\newcommand{\CircleArrowRight}[1]{%
    \setlength{\@SizeOfCirc}{\maxof{\widthof{#1}}{\heightof{#1}}}%
    \tikz [x=1.0ex,y=1.0ex,line width=.12ex]%
        \draw [->,anchor=center]%
            node (0,0) {#1}%
            (0,0.8\@SizeOfCirc) arc (85:-240:0.8\@SizeOfCirc);%
}%
\newcommand{\CircleArrowLeft}[1]{%
    \setlength{\@SizeOfCirc}{\maxof{\widthof{#1}}{\heightof{#1}}}%
    \tikz [x=1.0ex,y=1.0ex,line width=.12ex]%
        \draw [<-,anchor=center]%
            node (0,0) {#1}%
            (0,0.8\@SizeOfCirc) arc (85:-240:0.8\@SizeOfCirc);%
}%
\makeatother

\subjclass[2020]{Primary 51F15, 20F55, 20H15 ; Secondary 22E99, 52B70}%

\date{\today}%\date{October 23, 2020.}%{\today}

\begin{document}

\begin{abstract}
We provide a fundamental domain for the action of the finite Weyl group on a maximal torus of a compact Lie group of the corresponding type. The general situation is reduced to the adjoint case and, from the perspective of root data, this problem can be rephrased by asking for a fundamental polytope for the action of the extended affine Weyl group on the (dual) toral subalgebra. We solve the problem in this second form.

Using the theory of minuscule weights, we obtain a description of this fundamental polytope as a convex hull of explicit vertices, and as an intersection of closed half-spaces. The latter description was first obtained by Komrakov and Premet in 1984 but, as the present work is independent of that of Komrakov--Premet, we give a new self-contained proof of it. We also derive some consequences on the structure of automorphism groups of extended Dynkin diagrams.
\end{abstract}

\maketitle

\tableofcontents

\section*{Introduction and motivations}

One of the problems the author studied in his PhD thesis \cite{manuscript} was to exhibit explicit triangulations of maximal tori of a compact Lie group, that are equivariant with respect to the natural action of the corresponding Weyl group.

A first attempt toward this goal was to look for a polytopal fundamental domain for this action\footnote{As explained in \cite[Remark 4.2.5]{manuscript}, the resulting fundamental domain fails to always yield a Weyl-equivariant triangulation in the adjoint case, because some cells are stabilized by elements of the Weyl group that do not restrict to the identity on these cells. The smallest Lie group in which this issue occurs is the projective unitary symplectic group $PSp(3)$ of type $C_3$. The general problem was finally solved by considering abstract barycentric subdivisions, see \cite[Theorem 4.2.3]{manuscript}.}; this is the aim of the present work. Pulling the situation back along the exponential map translates the problem into finding a fundamental polytope for the action of the Weyl group, extended by the kernel of the exponential map (a lattice standing between coroots and coweights). As explained in the final subsection \ref{subsec:general}, this problem can be further reduced to the case of the adjoint Lie group, i.e. the corresponding lattice is the coweight lattice and the extended group is the usual \textit{extended affine Weyl group}. Observe that in the other extremal case (where the Lie group is simply-connected), the extended group is the classical \textit{affine Weyl group} and the fundamental alcove is the desired fundamental polytope.

As is well-known, the extended affine Weyl group is not (naturally) Coxeter, which complicates the combinatorics of its action on the toral Lie subalgebra. However, we shall see that we can uniformly describe a fundamental polytope for this action, as an intersection of half-spaces as well as the convex hull of explicit vertices. As the author realized after writing the proofs down, the first description is originally due to B. P. Komrakov and A. A. Premet in \cite{komrakov-premet}. The main result may be summarized as follows:
\begin{theo*}[{Corollaries \ref{funddomforextendedaffineweylgroup}, \ref{explicit_action}, \ref{funddomforKadjointcase} and Proposition \ref{verticesofFP}}]
Let $\Phi\subset V$ be an irreducible root system, with simple system $\Pi$, highest root $\alpha_0=\sum_{\alpha\in\Pi}n_\alpha\alpha$ and finite Weyl group $W$. We denote by $Q:=\Z\Phi\subset V$ the root lattice and by $P^\vee:=Q^\wedge\subset V^*$ the coweight lattice, which is the dual lattice of $Q$.

The following convex polytope
\[F:=\{\lambda\in V^*~;~\left<\lambda,\alpha_0\right>\le1,~\forall \alpha\in\Pi,~\left<\lambda,\alpha\right>\ge0\text{ and }n_\alpha=1~\Rightarrow~\left<\lambda,\alpha_0+\alpha\right>\le1\}\]
is a (closed) fundamental domain for the extended affine Weyl group $\widehat{W}_{\rm a}=P^\vee\rtimes W$ acting on $V^*$. It is homeomorphic to its image under the canonical projection $V^*\twoheadrightarrow V^*/P^\vee$, yielding a polytopal fundamental domain for the action on $W$ on $V^*/P^\vee$, the latter $W$-space being naturally isomorphic to a maximal torus of the adjoint Lie group of type $\Phi$.

Moreover, letting $\Pi_m:=\{\alpha\in\Pi~;~n_\alpha=1\}$ be the set of simple roots $\alpha$ whose associated fundamental coweight $\varpi_\alpha^\vee$ is minuscule, the set of vertices of the polytope $F$ is given by
\[\vertices(F)=\{0\}\cup\left\{\frac{\varpi_\alpha^\vee}{n_\alpha}\right\}_{\alpha\in\Pi\setminus\Pi_m}\cup\left\{\frac{1}{|\Pi'|+1}\sum_{\beta\in\Pi'}\varpi_\beta^\vee\right\}_{\emptyset\subseteq\Pi'\subseteq\Pi_m}.\]
In other words, the vertices of $F$ are the non-minuscule vertices of the fundamental alcove, together with all isobarycenters of minuscule coweights along with $0$, having a non-zero coefficient with respect to the origin.

Finally, the partial action of $\widehat{W_{\rm a}}$ on $F$ can be described in terms of automorphisms of the extended Dynkin diagram of $\Phi$; see the Table \ref{extendeddynkindiagrams}.
\end{theo*}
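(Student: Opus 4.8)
The plan is to realise $F$ as a fundamental domain for the finite abelian group $\Omega:=\widehat{W}_{\mathrm a}/W_{\mathrm a}\cong P^\vee/Q^\vee$ acting on the fundamental alcove, where $W_{\mathrm a}:=Q^\vee\rtimes W$ (with $Q^\vee:=\Z\Phi^\vee$) is the ordinary affine Weyl group and
\[A:=\{\lambda\in V^*~;~\langle\lambda,\alpha\rangle\ge0~\forall\alpha\in\Pi,~\langle\lambda,\alpha_0\rangle\le1\}\]
is its simplicial fundamental domain. First I would recall the standard facts that $A$ is an $r$-simplex ($r=|\Pi|$) with vertices $0$ and $\varpi_\alpha^\vee/n_\alpha$ ($\alpha\in\Pi$), and that $\widehat{W}_{\mathrm a}=W_{\mathrm a}\rtimes\Omega$ with $\Omega$ realised concretely as the stabiliser of $A$, acting on $A$ by affine symmetries and hence by automorphisms of the extended Dynkin diagram. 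Since $A$ is a fundamental domain for the normal subgroup $W_{\mathrm a}$ and $\Omega$ preserves $A$, a (closed) fundamental domain for $\widehat{W}_{\mathrm a}$ on $V^*$ is obtained as a fundamental domain for the residual action of $\Omega$ on $A$; this reduces everything to a finite simplicial problem.

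The next step, and the point where minuscule theory enters, is to pin down the $\Omega$-action on the vertices of $A$. A vertex $\varpi_\alpha^\vee/n_\alpha$ is special (i.e. lies in the $\widehat{W}_{\mathrm a}$-orbit of $0$) precisely when $n_\alpha=1$, equivalently when $\langle\varpi_\alpha^\vee,\alpha_0\rangle=1$, equivalently when $\varpi_\alpha^\vee$ is minuscule; thus the special vertices are exactly $\{0\}\cup\{\varpi_\alpha^\vee\}_{\alpha\in\Pi_m}$, and $\Omega$ permutes them simply transitively (so $|\Omega|=|\Pi_m|+1$). Introducing the barycentric coordinates on $A$, namely
\[t_0=1-\langle\lambda,\alpha_0\rangle,\qquad t_\alpha=n_\alpha\langle\lambda,\alpha\rangle\quad(\alpha\in\Pi),\]
I would take for $F$ the region of $A$ on which the coordinate $t_0$ of the origin is maximal among the special coordinates. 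Because $\Omega$ acts simply transitively on the special vertices and permutes the corresponding coordinates, this ``closest to $0$'' region is a fundamental domain for $\Omega$ on $A$. Finally, for $\alpha\in\Pi_m$ (so $n_\alpha=1$) the inequality $t_0\ge t_\alpha$ reads $1-\langle\lambda,\alpha_0\rangle\ge\langle\lambda,\alpha\rangle$, i.e. $\langle\lambda,\alpha_0+\alpha\rangle\le1$, which is exactly the extra condition defining $F$; this identifies $F$ with the stated intersection of half-spaces and proves Corollary \ref{funddomforextendedaffineweylgroup}.

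For the vertices (Proposition \ref{verticesofFP}) I would work entirely in barycentric coordinates, where $F=\{t\ge0,~\sum_i t_i=1,~t_0\ge t_\alpha~\forall\alpha\in\Pi_m\}$, and classify vertices by their set of tight constraints. If $t_0>0$ at a vertex, counting the $r$ tight independent constraints forces every positive coordinate other than $t_0$ to be a minuscule one with $t_\alpha=t_0$; writing $\Pi'\subseteq\Pi_m$ for the corresponding index set one gets $t_0=t_\beta=1/(|\Pi'|+1)$, i.e. the isobarycenter $\frac{1}{|\Pi'|+1}\sum_{\beta\in\Pi'}\varpi_\beta^\vee$. If $t_0=0$, the constraints $t_0\ge t_\alpha$ force $t_\alpha=0$ for all $\alpha\in\Pi_m$, so the vertex lies on the simplex face spanned by the non-minuscule vertices, whose vertices are the $\varpi_\alpha^\vee/n_\alpha$ with $\alpha\in\Pi\setminus\Pi_m$. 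Together these two cases yield precisely the asserted list (with $\Pi'=\emptyset$ recovering $0$).

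It remains to descend to the torus and to describe the action. Since $\widehat{W}_{\mathrm a}=P^\vee\rtimes W$ and $V^*/P^\vee$ carries the residual $W$-action, a fundamental domain for $\widehat{W}_{\mathrm a}$ on $V^*$ projects to a fundamental domain for $W$ on $V^*/P^\vee$; injectivity of the canonical projection $\pi\colon V^*\to V^*/P^\vee$ on the interior of $F$ holds because two points of $F$ differing by an element of $P^\vee\subset\widehat{W}_{\mathrm a}$ lie in a common $\widehat{W}_{\mathrm a}$-orbit, and compactness of $F$ upgrades this to a homeomorphism onto its image, giving Corollary \ref{funddomforKadjointcase} (using the identification of $V^*/P^\vee$ with a maximal torus of the adjoint group from \ref{subsec:general}). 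The face-pairings of $\partial F$, and hence the partial action recorded in Table \ref{extendeddynkindiagrams}, are read off from the simply transitive $\Omega$-action on special vertices translated into extended Dynkin diagram automorphisms (Corollary \ref{explicit_action}). I expect the genuine obstacle to be the second step: establishing cleanly, via minuscule weights, that the special vertices are exactly $\{0\}\cup\{\varpi_\alpha^\vee\}_{\alpha\in\Pi_m}$ with $\Omega$ simply transitive on them, since everything else is either standard alcove geometry or a bookkeeping of tight constraints; the explicit diagram-automorphism table will additionally require a type-by-type verification.
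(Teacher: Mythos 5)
Your strategy for the heart of the theorem --- realising $F$ as a fundamental domain for $\Omega$ acting on the fundamental alcove, using $\widehat{W_{\rm a}}=W_{\rm a}\rtimes\Omega$ --- is the same reduction the paper makes, but your mechanism for proving it is genuinely different and, where it is complete, correct. The paper (Proposition \ref{funddomforOmega}) works with Bourbaki's explicit elements $\omega_i=\mathrm{t}_{\varpi_i^\vee}w_i$ and needs the root-level Lemmas \ref{actionwimonroots} and \ref{actionwionroots} ($w_i^{-1}\alpha_i=-\alpha_0$, $w_i^{-1}$ maps the remaining simple roots to simple roots, etc.) to verify covering and empty-interior overlaps by direct pairing computations. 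You bypass all of that: granting that $\Omega$ permutes the special vertices $\{0\}\cup\{\varpi_\alpha^\vee\}_{\alpha\in\Pi_m}$ of the simplex $\mathcal{A}_0$ simply transitively, and that affine automorphisms of a simplex permute barycentric coordinates, both fundamental-domain axioms for your ``$t_0$ maximal'' region become one-line arguments (covering: apply the $\omega$ carrying the special vertex with largest coordinate to $0$; overlap: $F\cap\omega F$ lies in the affine hyperplane $t_0=t_{\omega(0)}$, which has empty interior). Your tight-constraint classification of the vertices is likewise a correct reformulation of Proposition \ref{verticesofFP}, done in barycentric rather than linear coordinates. The input you flag as the main obstacle --- simple transitivity on special vertices --- is precisely the content of Proposition-Definition \ref{descriptionOmega} (Bourbaki VI, \S 2.3, Proposition 6) combined with $\mathcal{A}_0\cap P^\vee=\{0\}\cup\{\varpi_j^\vee\}_{j\in J}$ (Bourbaki VI, \S 2.2, Proposition 5), so it is citable rather than open; what your route does not recover uniformly is the diagram-automorphism description of the action (Corollaries \ref{pi(Omega)=Aut(Dyn0)} and \ref{explicit_action}), for which the paper really does use those root-level lemmas and which you defer to a type-by-type check.

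The genuine gap is in the descent to the torus. You argue that $\pr$ is injective on the interior of $F$ and that ``compactness of $F$ upgrades this to a homeomorphism onto its image''. That inference is false: compactness turns a continuous \emph{injection} into a homeomorphism onto its image, but injectivity on the interior does not propagate to the closed polytope --- the map $[0,1]\to\R/\Z$ is injective on $(0,1)$ and has compact domain, yet identifies $0$ with $1$. The danger is not hypothetical here: in the adjoint case the alcove itself fails boundary injectivity (in type $A_1$, both $0$ and $\varpi^\vee$ lie in $\mathcal{A}_0$ and differ by an element of $P^\vee$), so the statement that no two points of $F$, boundary included, differ by a nonzero element of $P^\vee$ genuinely depends on the extra inequalities $\left<\lambda,\alpha_0+\alpha\right>\le1$ and needs its own proof. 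This is exactly the paper's Lemma \ref{A0inter(A0+P)=0}: one shows that a difference $\mu=\lambda-\lambda'\in P^\vee$ of two points of $F$ must lie in $\mathcal{A}_0$, hence by Bourbaki equals $0$ or a minuscule coweight $\varpi_k^\vee$, and the latter case contradicts $\left<\lambda,\alpha_0+\alpha_k\right>\le1$. Without this lemma (or an equivalent argument, say in your barycentric coordinates) you obtain a fundamental domain for $W$ on $V^*/P^\vee$, but not the claimed homeomorphism $F\simeq\pr(F)$.
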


The layout of the present work is as follows: first, after some reminders and reformulation of the problem in terms of root data, we review in \S \ref{subsec:simp_connected} the classical simply-connected case, where the usual fundamental alcove is the requested polytope.

Next, the \S \ref{subsec::adjoint} is devoted to the proof of the above theorem, following more or less the same lines as in \cite{komrakov-premet} for the half-spaces description. The overall strategy is to use an explicit affine description of the non-trivial elements of the stabilizer $\Omega$ in $\widehat{W}_{\rm a}$ of the fundamental alcove. In fact, we prove that $F$ is a fundamental polytope for $\Omega$ acting on this alcove. As a side consequence, we obtain the Corollary \ref{pi(Omega)=Aut(Dyn0)}, stating that the automorphism group of the extended Dynkin diagram splits as the semi-direct product of $\Omega$ by the automorphism group of the (finite) Dynkin diagram.

Finally, we explain in \S \ref{subsec:general} how to generalize this approach to arbitrary compact Lie groups of type $\Phi$, using the correspondence between subgroups of $\Omega$ and lattices standing between roots and weights.

\addtocontents{toc}{\protect\setcounter{tocdepth}{1}}
\subsection*{Acknowledgments}
I would like to warmly thank Prof. Premet for having kindly sent to me a scanned copy of his 1984 article, for useful discussions that helped me improving the present notes, as well as for allowing me to put them on arXiv.
\addtocontents{toc}{\protect\setcounter{tocdepth}{2}}

\section{Prerequisites and notation}

\subsection{Root data}
\hfill

We start by briefly recalling what a \emph{root datum} is and how one can associate a root datum to any connected reductive complex algebraic group (and more specifically to any semisimple compact Lie group). Standard references for what follows are \cite{malle-testerman} and \cite{kirillovandjunior}.

\begin{definition}\label{rootdatum}\emph{(\cite[Definition 9.10]{malle-testerman})}
A \emph{root datum} is a quadruple $(X,\Phi,Y,\Phi^\vee)$ where
\begin{enumerate}[label=(RD\arabic*)]
\item the elements $X$ and $Y$ are free abelian groups of finite rank, together with a perfect pairing $\left<\cdot,\cdot\right> : Y\times X \longto \Z$,
\item the subsets $\Phi\subset X$ and $\Phi^\vee\subset Y$ are (abstract) reduced root systems in $\Z\Phi\otimes_\Z\R$ and $\Z\Phi^\vee\otimes_\Z\R$, respectively,
\item there is a bijection $\Phi\longto\Phi^\vee$ (denoted by $\alpha\longmapsto \alpha^\vee$) such that $\left<\alpha^\vee,\alpha\right>=2$ for every $\alpha\in\Phi$,
\item the reflections $s_\alpha$ of the root system $\Phi$ and $s_{\alpha^\vee}$ of $\Phi^\vee$ are respectively given by
\[\forall x\in X,~s_\alpha(x):=x-\left<\alpha^\vee,x\right>\alpha\]
and
\[\forall y\in Y,~s_{\alpha^\vee}(y):=y-\left<y,\alpha\right>\alpha^\vee.\]
\end{enumerate}
The Weyl group $W$ of the root system $\Phi$ (which is isomorphic to the Weyl group of $\Phi^\vee$ via the map $s_\alpha\longmapsto s_{\alpha^\vee}$) is called the \emph{Weyl group of the root datum}. Moreover, we say that the root datum $(X,\Phi,Y,\Phi^\vee)$ is \emph{irreducible} if the root system $\Phi$ is.
\end{definition}

From this, one can easily define a morphism of root data and the corresponding category of root data.

For a root datum $(X,\Phi,Y,\Phi^\vee)$, we denote by $V:=\Z\Phi\otimes_\Z\R$ the ambient space and $V^*:=\Z\Phi^\vee\otimes_\Z\R$ (the notation is consistent since $\Z\Phi^\vee\otimes_\Z\R$ may be identified with the dual of $\Z\Phi\otimes_\Z\R$, via the pairing $\left<\cdot,\cdot\right>$). As usual, we denote by $\Phi^+\subset\Phi$ the set of positive roots (with respect to some linear order on $V$) and by $\Pi\subset\Phi^+$ the corresponding set of simple roots. Recall that there are elements $\varpi_\alpha\in V$ indexed by $\alpha\in\Pi$ such that
\[\forall \beta\in\Pi,~(\beta^\vee,\varpi_\alpha)=\delta_{\alpha,\beta}=\left\{\begin{array}{cc} 1 & \text{if}~\alpha=\beta \\ 0 & \text{otherwise}\end{array}\right.\]
These elements form a basis of $V$ and are called the \emph{fundamental weights} of $\Phi$. Dually, we can define the \emph{fundamental coweights} $\varpi_\alpha^\vee\in V^*$ of $\Phi$ by the property
\[\forall\beta\in\Pi,~(\varpi_\alpha^\vee,\beta)=\delta_{\alpha,\beta}.\]
We also consider respectively
\[Q:=\Z\Phi=\bigoplus_{\alpha\in\Pi}\Z\alpha\subset V~~\text{and}~~Q^\vee:=\Z\Phi^\vee=\bigoplus_{\alpha\in\Pi}\Z\alpha^\vee\subset V^*\]
the \emph{root lattice} and the \emph{coroot lattice} of $\Phi$. Further, we have the respective \emph{weight lattice} and \emph{coweight lattice}:
\[P:=(Q^\vee)^\wedge=\{x\in V~;~\forall\alpha\in\Phi,~\left<\alpha^\vee,x\right>\in\Z\}=\bigoplus_{\alpha\in\Pi}\Z \varpi_\alpha\subset V~~\text{and}~~P^\vee:=\bigoplus_{\alpha\in\Pi}\Z\varpi_\alpha^\vee\subset V^*.\]
Thus, the abelian group $X$ is a $W$-lattice between $Q$ and $P$:
\[Q\subseteq X\subseteq P.\]
If we enumerate the simple roots $\Pi=\{\alpha_1,\dotsc,\alpha_n\}$ (with $n=\rk(X)=\dim(V)$) and if $C:=\left(\left<\alpha_i^\vee,\alpha_j\right>\right)_{1\le i,j\le n}$ is the Cartan matrix of $\Phi$, then we have
\[\det(C)=[P:Q]=[P^\vee:Q^\vee].\]

\begin{rem}\label{justlikeaWlattice}
Note that to give a root datum $(X,\Phi,Y,\Phi^\vee)$ is the same as to give a Euclidean root system $\Phi$ together with a $W$-lattice $X$ such that $Q\subseteq X\subseteq P$. Indeed, given a root datum $(X,\Phi,Y,\Phi^\vee)$, the following bilinear form 
\[(x,y):=\sum_{\alpha\in\Phi}\left<\alpha^\vee,x\right>\left<\alpha^\vee,y\right>\]
certainly defines a $W$-invariant inner product on the ambient space $V$, makes $\Phi$ into a Euclidean root system and $X$ is clearly a lattice in $V$. On the other hand, given a Euclidean root system and a $W$-lattice $Q\subseteq \Lambda\subseteq P$, the inner product yields a perfect pairing $\Lambda^\wedge\times\Lambda \longto \Z$ and then $(\Lambda,\Phi,\Lambda^\vee,\Phi^\vee)$ is indeed a root datum.
\end{rem}

The crucial interest of root data relies in the following theorem of Chevalley:
\begin{theo}\label{chevalley}\emph{(Chevalley classification theorem, \cite[\S 9.2]{malle-testerman})}
Let $G$ be a connected reductive algebraic group over an algebraically closed field $k$ and $T$ be a maximal torus of $G$. Let $\Phi$ be the root system associated to the pair $(G,T)$ and denote by $\Phi^\vee:=\{\alpha^\vee,~\alpha\in\Phi\}$ its dual root system. Let moreover $X(T):=\ho(T,\mathbb{G}_m)$ and $Y(T):=\ho(\mathbb{G}_m,T)$ be the \emph{character lattice} and the \emph{cocharacter lattice} of $T$, respectively. Then, $(X(T),\Phi,Y(T),\Phi^\vee)$ is a root datum.

Two semisimple linear algebraic groups are isomorphic if and only if their root data are isomorphic. For each root datum, there is a semisimple algebraic group which realizes it. Finally, the group is simple if and only if the associated root datum is irreducible.
\end{theo}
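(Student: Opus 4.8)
The statement bundles together four assertions of quite different depth, and my plan is to treat them in the order: verification of the root-datum axioms, the isomorphism theorem, the existence theorem, and finally the simplicity criterion. The middle two are the real substance, while the outer two are structural consequences of the general theory of reductive groups.

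First I would verify that $(X(T),\Phi,Y(T),\Phi^\vee)$ meets Definition \ref{rootdatum}. Since $T\cong\mathbb{G}_m^n$, both the character and cocharacter groups are free of rank $n$, and the composite $\mathbb{G}_m\xrightarrow{\psi}T\xrightarrow{\chi}\mathbb{G}_m$ equals $z\mapsto z^m$ for a unique $m\in\Z$; declaring $\left<\psi,\chi\right>:=m$ yields the perfect pairing required by (RD1). For (RD2), I would take $\Phi$ to be the set of nonzero weights of $T$ acting on $\lie(G)$ by the adjoint representation and invoke the structure theory, which gives that each weight space is one-dimensional and that $\Phi$ is a reduced root system in $\Z\Phi\otimes_\Z\R$. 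For each $\alpha\in\Phi$ the coroot arises from the rank-one subgroup $\langle U_\alpha,U_{-\alpha}\rangle$, whose derived group is $SL_2$ or $PGL_2$: taking $\alpha^\vee$ to be the image of the diagonal cocharacter, the $SL_2$-computation delivers $\left<\alpha^\vee,\alpha\right>=2$, which is (RD3), while a representative $n_\alpha\in N_G(T)$ of the associated reflection induces $s_\alpha$ on $X(T)$, and computing its action on characters recovers the reflection formulas of (RD4).

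The isomorphism theorem is the first genuine obstacle. One direction is easy: any isomorphism $G\to G'$ carries a maximal torus to a maximal torus and respects the adjoint action, hence induces an isomorphism of root data. For the converse, given an isomorphism $f$ of root data I would build a group isomorphism $\varphi\colon G\to G'$ by fixing a pinning on each side — a maximal torus, a Borel, and isomorphisms $u_\alpha\colon\mathbb{G}_a\xrightarrow{\sim}U_\alpha$ for the simple roots $\alpha$. The datum $f$ first determines $\varphi$ on $T$; one then extends it across the root subgroups, using that $G$ is generated by $T$ and the $U_\alpha$ subject to the Chevalley commutator relations together with the Steinberg relations, whose structure constants are pinned down up to rescaling by $T$. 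The heart of the argument is the lemma that these constants can be matched on the two sides, so that $\varphi$ respects every defining relation and glues to an isomorphism, unique up to an inner automorphism by $T'$; controlling the signs in the commutator formula is exactly what the pinning is for.

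For existence I would invoke Chevalley's construction: starting from a complex semisimple Lie algebra $\mathfrak{g}$ with the prescribed root system, fix a Chevalley basis, form the Kostant $\Z$-form of its enveloping algebra, and choose an admissible lattice in a faithful representation whose weights span the prescribed $X$; the operators $x_\alpha(t)=\exp(t\,e_\alpha)$ then generate, over any field $k$, a group with exactly the given root datum (alternatively one may cite the Demazure--Grothendieck construction of a split reductive $\Z$-group directly from the combinatorial datum). Producing a group realizing an arbitrary datum is precisely the deep content here and cannot be reduced to formal manipulation, so this is the main obstacle of the whole statement. Finally, for the simplicity criterion I would use that the closed connected normal subgroups of a semisimple $G$ correspond to the sub-root-systems that are unions of irreducible components of $\Phi$: irreducibility of $\Phi$ then forces every proper such subgroup to be central, so $G$ is almost simple, whereas a decomposition $\Phi=\Phi_1\sqcup\Phi_2$ produces a nontrivial isogeny of $G$ onto a product and hence prevents $G$ from being simple.
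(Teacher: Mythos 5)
The paper offers no proof of this statement at all: Theorem \ref{chevalley} is imported verbatim from \cite[\S 9.2]{malle-testerman} as background, and everything downstream (the root datum of a compact group, the reduction to lattices $Q\subseteq X\subseteq P$) uses it as a black box. So the only meaningful comparison is between your outline and the classical proof in the literature, and on that score your reconstruction is faithful: the verification of (RD1)--(RD4) via the pairing $\left<\psi,\chi\right>=m$ for $\chi\circ\psi:z\mapsto z^m$, the roots as nonzero weights of $T$ on $\lie(G)$ with one-dimensional weight spaces, the coroot extracted from the rank-one subgroup $\left<U_\alpha,U_{-\alpha}\right>\simeq SL_2$ or $PGL_2$, the isomorphism theorem via pinnings and matching of structure constants (unique up to an inner automorphism by the torus), existence via Chevalley's $\Z$-form with an admissible lattice whose weights generate the prescribed $X$ (or Demazure--Grothendieck), and the simplicity criterion via the correspondence between connected closed normal subgroups and unions of irreducible components of $\Phi$ --- this is exactly how the standard sources (Chevalley, Steinberg, SGA3, Springer, Malle--Testerman) structure the argument. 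What your sketch buys over the paper's treatment is a map of where the actual depth sits, which you correctly locate in the isomorphism and existence theorems.

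Two caveats, neither fatal but worth flagging since your middle step is the load-bearing one. First, the generators-and-relations argument as you state it is really an argument for the simply connected case: the Steinberg relations present the simply connected Chevalley group, and for a root datum with $X\subsetneq P$ one cannot just match structure constants --- the clean route is Steinberg's isogeny theorem (or the uniqueness theorem of SGA3), lifting the map through the simply connected cover and checking that the kernel is carried correctly by the root-datum isomorphism. Second, for the root datum of $(G,T)$ to be well defined up to isomorphism, and for the easy direction of the classification, you implicitly need conjugacy of maximal tori, which deserves explicit mention. With those two repairs your outline is a correct skeleton of the proof; filling it in fully is book-length material, which is precisely why the paper, like Malle--Testerman themselves at this point, cites rather than proves it.
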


This can also be adapted to the study of Lie groups.   As usual, a small gothic letter denotes the Lie algebra of the algebraic (or Lie) group denoted by the same letter, written in capital standard font.

We let $G$ be a semisimple connected algebraic group over $\C$, $B$ be a Borel subgroup of $G$, $K$ be a compact real form of $G$ (i.e. $K$ is a semisimple compact Lie group such that $\Lie{g}=\Lie{k}\otimes_\R \C$). Then, the subgroup $T:=K\cap B$ is a maximal torus of $K$ and the complexified Lie algebra $\Lie{h}:=\Lie{t}\otimes \C$ is a Cartan subalgebra of $\Lie{g}$. 

If $\Phi$ denotes the root system of $(\Lie{k},\Lie{t})$ (which is just the real form of the root system of $(\lie{g},\Lie{h})$) and $\Phi:=\{\alpha^\vee,~\alpha\in\Phi\}$, then $\Phi\subset i\Lie{t}^*$ and we may take
\[X(T):=\{d\lambda : \Lie{t}\longto i\R,~\lambda\in\ho(T,\Sph^1)\}\subset i\Lie{t}^*\]
the \emph{character lattice} of $T$ and similarly, $Y(T)=X(T)^\wedge:=\{x\in i\Lie{t}~;~\forall \lambda\in X(T),~\lambda(x)\in\Z\}\subset i\Lie{t}$ is the \emph{cocharacter lattice} of $T$ and the pairing is of course given by 
\[X(T)\times Y(T) \ni (\lambda,x)\longmapsto \lambda(x)\in\Z.\]
Then $(X(T),\Phi,Y(T),\Phi^\vee)$ is a root datum. Note that we have isomorphisms $W\simeq N_K(T)/T\simeq N_G(T^\C)/T^\C$ and the Killing form $(\cdot,\cdot)$ on $\Lie{g}$ restricts to the Killing form on $\Lie{k}$ and gives a $W$-invariant inner product on $V=i\Lie{t}^*$.

Notice finally that $W$ acts naturally on $T$ by conjugation by a representative in the normalizer $N_K(T)$. This is well-defined since $T$ is abelian. On the other hand, $W$ acts on $V$, $V^*$ and on the lattices $X(T)$ and $Y(T)$.

We have the following important result:
\begin{lem}\label{ker(exp)}\emph{(\cite[Lemma 1]{kirillovandjunior})}
If $x\longmapsto e^x$ denotes the usual (Lie theoretic) exponential map $\Lie{t}\longto T$, then the \emph{normalized exponential map}
\[\begin{array}{ccccc}
\exp & : & i\Lie{t} & \longto & T \\ & & x & \mapsto & e^{2i\pi x}\end{array}\]
is surjective and descends to an isomorphism of Lie groups
\[V^*/Y(T) \stackrel{\tiny{\sim}}\longto T.\]
Furthermore, this isomorphism is $W$-equivariant.
\end{lem}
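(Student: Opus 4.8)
The plan is to use the fact that $T$, being a compact connected abelian Lie group of rank $n=\dim(V)$, is a torus, so that $\Lie{t}$ is a universal cover of $T$ via the usual exponential. Concretely, the Lie-theoretic exponential $\exp_{\Lie{t}}:\Lie{t}\longto T$ is a surjective homomorphism of Lie groups whose kernel is a full-rank lattice $\Gamma\subset\Lie{t}$ (the \emph{integral lattice} of $T$), and it identifies $T$ with $\Lie{t}/\Gamma$. Since multiplication by $2i\pi$ is a linear isomorphism $i\Lie{t}\stackrel{\sim}\longto\Lie{t}$, the normalized exponential $\exp(x)=e^{2i\pi x}$ factors as $\exp_{\Lie{t}}\circ(2i\pi\cdot)$ and is therefore again a surjective homomorphism of Lie groups, this time with kernel $\frac{1}{2i\pi}\Gamma\subset i\Lie{t}=V^*$. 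It thus descends to an injective continuous homomorphism $V^*/\ker(\exp)\longto T$ which, being a continuous bijective homomorphism between tori of equal dimension, is automatically an isomorphism of Lie groups. Everything then reduces to identifying $\ker(\exp)$ with the cocharacter lattice $Y(T)$, and to checking $W$-equivariance.

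To identify the kernel, I would use the duality between $T$ and its characters. For a character $\Lambda\in\ho(T,\Sph^1)$ with differential $\lambda=d\Lambda\in X(T)$ and any $x\in i\Lie{t}$, naturality of the exponential gives $\Lambda(e^{2i\pi x})=e^{2i\pi\left<\lambda,x\right>}$, where $\left<\cdot,\cdot\right>$ denotes the evaluation pairing induced by the inclusion $X(T)\subset i\Lie{t}^*$ and the normalization constant $2i\pi$ is precisely what produces an integer exponent. Hence $\Lambda(\exp(x))=1$ for every character $\Lambda$ exactly when $\left<\lambda,x\right>\in\Z$ for every $\lambda\in X(T)$. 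Since the characters of the torus $T$ separate its points (Pontryagin duality, or the Peter--Weyl theorem), the former condition says precisely that $\exp(x)=1$; therefore
\[\ker(\exp)=\{x\in i\Lie{t}~;~\forall\lambda\in X(T),~\left<\lambda,x\right>\in\Z\}=X(T)^\wedge=Y(T),\]
which is the desired identification and yields the isomorphism $V^*/Y(T)\stackrel{\sim}\longto T$.

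Finally, for $W$-equivariance, recall that $W\simeq N_K(T)/T$ acts on $T$ by conjugation and on $V^*=i\Lie{t}$ through the adjoint action (restricted to $\Lie{t}$ and extended $\R$-linearly), the two being intertwined by naturality of $\exp_{\Lie{t}}$: for $n\in N_K(T)$ one has $n\,e^{Y}\,n^{-1}=e^{\mathrm{Ad}(n)Y}$ for all $Y\in\Lie{t}$. As the scalar $2i\pi$ commutes with the linear action of $W$, the same identity holds for the normalized exponential, so $\exp(w\cdot x)=w\cdot\exp(x)$ for every $w\in W$; and since $Y(T)$ is $W$-stable, the induced isomorphism $V^*/Y(T)\stackrel{\sim}\longto T$ is $W$-equivariant.

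The step I expect to require the most care is the kernel computation: one must keep careful track of the identification $V^*=i\Lie{t}$ and verify that the normalization constant $2i\pi$ is exactly what turns the integral lattice $\Gamma$ into the cocharacter lattice $Y(T)=X(T)^\wedge$ under the evaluation pairing, rather than into some rescaling of it. The surjectivity and the equivariance are then formal consequences of the torus structure and of the naturality of the exponential map.
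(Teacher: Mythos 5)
Your proof is correct, but there is nothing in the paper to compare it against: the paper states this lemma as an imported result, citing Lemma 1 of Kirillov--Kirillov \cite{kirillovandjunior}, and gives no proof of its own. Judged on its own merits, your argument is complete and is the standard one. The structural part is fine: for the compact connected abelian group $T$ the Lie-theoretic exponential is a surjective homomorphism with kernel a full lattice, precomposition with the $\R$-linear isomorphism $x\mapsto 2i\pi x$ from $i\Lie{t}$ onto $\Lie{t}$ preserves all of this, and a continuous bijective homomorphism of compact Lie groups is automatically an isomorphism. The kernel computation is exactly right and is the only place where the normalization matters: for $\Lambda\in\ho(T,\Sph^1)$ with $d\Lambda=\lambda\in X(T)$ one has $\Lambda(e^{2i\pi x})=e^{2i\pi\langle\lambda,x\rangle}$ with $\langle\lambda,x\rangle$ real for $x\in i\Lie{t}$, and since characters of a torus separate points, $\exp(x)=1$ if and only if $\langle\lambda,x\rangle\in\Z$ for all $\lambda\in X(T)$; this gives $\ker(\exp)=X(T)^\wedge=Y(T)$, which matches the paper's definition of the cocharacter lattice as the dual lattice of $X(T)$ inside $i\Lie{t}$, without ever needing an explicit description of the integral lattice $\Gamma$. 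The equivariance step, via $n\,e^{Y}\,n^{-1}=e^{\mathrm{Ad}(n)Y}$ and the $W$-stability of $Y(T)$, is likewise correct. The delicate point you flagged --- that the factor $2i\pi$ produces precisely $X(T)^\wedge$ and not a rescaling of it --- is indeed the crux, and your character-theoretic derivation settles it cleanly.
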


We have the following important isomorphisms
\begin{equation}\label{pi1andZ}
P/X(T)\simeq \pi_1(K)~~\text{and}~~X(T)/Q\simeq Z(K).
\end{equation}
In particular, the product $|\pi_1(K)|\times|Z(K)|=[P:Q]=\det(C)$ is constant on the isogeny class of $K$.

\begin{theo}\label{X(T)charactK}\emph{(\cite[Theorem 7]{kirillovandjunior})}
The group $K$ is determined (up to isomorphism) by its Lie algebra $\Lie{k}$, the maximal toral subalgebra $\Lie{t}$ of $\Lie{k}$ and by the lattice $X(T)\subset i\Lie{t}^*$, which can be any lattice $L$ such that $Q\subseteq L\subseteq P$.

The group corresponding to $L=P$ is the \emph{simply-connected group} and the one associated to $L=Q$ is the \emph{adjoint group}.
\end{theo}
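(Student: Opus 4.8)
The plan is to derive the statement from the Chevalley classification (Theorem~\ref{chevalley}) together with the dictionary relating a compact semisimple group to its complexification. First I would note that the root system $\Phi$ is intrinsic to the pair $(\Lie{k},\Lie{t})$: the complexification $\Lie{g}=\Lie{k}\otimes_\R\C$ decomposes as $\Lie{h}\oplus\bigoplus_{\alpha\in\Phi}\Lie{g}_\alpha$ with $\Lie{h}=\Lie{t}\otimes\C$, so $\Phi\subset i\Lie{t}^*$ is read off from the bracket, and with it the lattices $Q\subseteq P$ and the Weyl group $W$. Hence, inside the root datum $(X(T),\Phi,Y(T),\Phi^\vee)$ attached to $K$, the only piece of data not already fixed by $(\Lie{k},\Lie{t})$ is the character lattice $X(T)$. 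By Remark~\ref{justlikeaWlattice}, prescribing such a root datum is exactly the same as prescribing the Euclidean root system $\Phi$ together with a $W$-stable lattice squeezed between $Q$ and $P$, its partner $Y(T)=X(T)^\wedge$ being forced by the pairing.

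Next I would transfer this to groups. The complexification $G=K^\C$ is the semisimple complex algebraic group with root datum $(X(T),\Phi,Y(T),\Phi^\vee)$, and Theorem~\ref{chevalley} says that two such groups are isomorphic precisely when their root data are, and that every root datum is realized. Recovering $K$ from $G$ as its compact real form (unique up to conjugacy) then shows that the isomorphism type of $K$ is determined by the triple $(\Lie{k},\Lie{t},X(T))$, and conversely that every lattice $L$ with $Q\subseteq L\subseteq P$ occurs: one forms the root datum $(L,\Phi,L^\wedge,\Phi^\vee)$ as in Remark~\ref{justlikeaWlattice}, takes the associated complex group, and passes to its compact real form. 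For the two distinguished lattices I would simply read off the topology from~\eqref{pi1andZ}: the choice $L=P$ gives $\pi_1(K)=P/X(T)=0$, so $K$ is simply connected, while $L=Q$ gives $Z(K)=X(T)/Q=0$, so $K$ is centreless, that is, adjoint.

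The step I expect to be the main obstacle is the passage between the algebraic and the compact settings, namely making rigorous that $K\mapsto K^\C$, together with the choice of a compact real form, induces a bijection on isomorphism classes, so that Chevalley's classification of semisimple algebraic groups transfers verbatim to semisimple compact Lie groups. An alternative route avoiding complexification would build every $K$ as a quotient $\widetilde{K}/D$ of the (compact, by Weyl's theorem) simply-connected cover by a central subgroup $D\subseteq Z(\widetilde{K})\cong P/Q$, and match subgroups $D$ with intermediate lattices via $X(T)=\{\lambda\in P~;~\lambda|_D=1\}$; the extreme cases $L=P$ and $L=Q$ then correspond to $D=\{1\}$ and $D=Z(\widetilde{K})$. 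Either way, once the relevant equivalence is in place the remaining work is the bookkeeping of lattices and the two end-point computations already supplied by the fundamental-group and centre formulas.
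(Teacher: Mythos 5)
The paper does not actually prove this statement: it is imported wholesale from \cite[Theorem 7]{kirillovandjunior} (the citation is part of the theorem header), so there is no internal argument to compare yours against. Judged on its own, your outline is correct, and your ``alternative route'' via covering theory is essentially the proof of the cited reference: Weyl's theorem makes the universal cover $\widetilde{K}$ compact, the formulas \eqref{pi1andZ} applied to $\widetilde{K}$ give $X(\widetilde{T})=P$ and hence $Z(\widetilde{K})\simeq P/Q$, and central subgroups $D\le Z(\widetilde{K})$ then correspond, inclusion-reversingly, to the intermediate lattices via $L=\{\lambda\in P~;~\lambda|_D=1\}$, with $D=\{1\}$ and $D=Z(\widetilde{K})$ giving the two extreme cases. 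Your first route (complexify, apply Theorem~\ref{chevalley} together with Remark~\ref{justlikeaWlattice}, then descend by uniqueness of the compact real form) is also sound and has the merit of reusing the Chevalley classification already stated in the paper; but, as you yourself flag, it buries the real content in the claim that $K\mapsto K^\C$ induces a bijection on isomorphism classes, which needs the existence of an algebraic complexification and Cartan's conjugacy theorem for compact real forms --- results of comparable depth to the statement being proved, and not available elsewhere in this paper. So neither route is a gap in the sense of a wrong step; the honest summary is that your second sketch, fleshed out with the two classical inputs it names, is the standard proof (and the one in \cite{kirillovandjunior}), while your first sketch trades those inputs for different ones of equal weight. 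One small point worth making explicit in either version: using \eqref{pi1andZ} here is legitimate and non-circular within the paper's ordering, since the direction you need ($\pi_1(K)=1\Rightarrow X(T)=P$, and $X(T)=Q\Rightarrow Z(K)=1$) follows from those isomorphisms as stated, before the theorem is invoked.
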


The previous two results show that the initial problem of finding a fundamental domain for $W$ on the torus $T$ may be reformulated as follows: given a root datum $(X,\Phi,Y,\Phi^\vee)$, with Weyl group $W$ and ambient space $V:=\Z\Phi\otimes_\Z\R$, find a fundamental domain for $W$ acting on the torus $V^*/Y$.%The previous two results show that the initial problem of finding a $W$-equivariant cellular structure on the torus $T$ may be reformulated as follows: given a root datum $(X,\Phi,Y,\Phi^\vee)$, with Weyl group $W$ and ambient space $V:=\Z\Phi\otimes_\Z\R$, find a $W$-equivariant cellular structure on the torus $V^*/Y$.

In this context and, in view of the isomorphisms (\ref{pi1andZ}), the group $P/X$ is called the \emph{fundamental group} of the root datum. Notice that this yields a combinatorial way of defining the fundamental group of any connected reductive algebraic group.

Therefore, we shall fix the following notations:
\begin{notation*}
Throughout the paper we fix, once and for all, an irreducible root datum $(X,\Phi,Y,\Phi^\vee)$, with ambient space $V=\Z\Phi\otimes \R$, simple roots $\Pi\subset\Phi^+$, Weyl group $W=\left<s_\alpha,~\alpha\in\Pi\right>$, fundamental (co)weights $(\varpi_\alpha)_{\alpha\in\Pi}$ and $(\varpi^\vee_\alpha)_{\alpha\in\Pi}$, (co)root lattices $Q$ and $Q^\vee$ and (co)weight lattices $P$ and $P^\vee$, just as in the beginning of this section. Moreover, we consider the natural projection (which is $W$-equivariant):
\[\pr : V^*\twoheadrightarrow V^*/Y\]
\end{notation*}

\subsection{Reduction to fundamental domains of (extended) affine Weyl groups}
\hfill

For a vector $v$ is some vector space, we denote by $\mathrm{t}_v$ the affine endomorphism of the vector space defined by the translation by the vector $v$. Since $X$ is a $W$-lattice that stands between $Q$ and $P$, we may consider the \emph{intermediate affine Weyl group}
\[W_X:=\mathrm{t}(X)\rtimes W.\]
It is a subgroup of the group $\mathrm{Aff}(V)$ of affine transformations of the space $V$. In the same way, we may consider the group $W_Y:=\mathrm{t}(Y)\rtimes W$ of $\mathrm{Aff}(V^*)$ and we have a canonical isomorphism $W_X\simeq W_Y$, induced by the pairing $\left<\cdot,\cdot\right>$.

In the sequel, by a \emph{fundamental domain} for the action of a	 (discrete) group $G$ on a topological space $Z$, we mean a closed connected subspace $\mathcal{F}\subseteq X$ such that:
\begin{enumerate}[label=$\ast$]
\item For $1\ne g\in G$ the subset $\mathcal{F}\cap g\mathcal{F}$ has empty interior.
\item The translates of $\mathcal{F}$ cover the whole space: $Z=\bigcup_{g\in G}g\mathcal{F}$.
\end{enumerate}

\begin{lem}\label{fromlatticetotorus}
If $F$ is a fundamental domain for the action of $W_Y$ on $V^*$, then the subset $\mathcal{F}:=\pr(F)$ of $V^*/Y$ is a fundamental domain for the induced action of the Weyl group $W$.

Moreover, if $F\cap(Y\setminus\{0\})+F)=\emptyset$, then the restricted map
\[\pr : F\longto\mathcal{F}\]
is a homeomorphism.
\end{lem}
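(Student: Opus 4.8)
The plan is to prove the two assertions of Lemma \ref{fromlatticetotorus} separately, working directly from the definition of a fundamental domain and exploiting the $W$-equivariance of the projection $\pr : V^*\twoheadrightarrow V^*/Y$ recorded in the Notation. First I would verify that $\mathcal{F}:=\pr(F)$ is closed and connected: connectedness is immediate since $\mathcal{F}$ is the continuous image of the connected set $F$, and closedness follows because $\pr$ is the quotient by a cocompact lattice, hence a proper (closed) map. For the covering property, given any class $\bar{v}\in V^*/Y$, lift it to some $v\in V^*$; since $F$ is a fundamental domain for $W_Y=\mathrm{t}(Y)\rtimes W$ there is some $(y,w)\in W_Y$ with $v\in (y,w)F$, i.e. $v=y+w\cdot f$ for some $f\in F$. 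Applying $\pr$ and using both the $W$-equivariance of $\pr$ and the fact that $\pr$ kills $Y$, I get $\bar{v}=w\cdot\pr(f)\in W\cdot\mathcal{F}$, which gives $V^*/Y=\bigcup_{w\in W}w\mathcal{F}$.

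Next I would establish the empty-interior condition for the $W$-action on $\mathcal{F}$. Fix $1\ne w\in W$ and suppose $\mathcal{F}\cap w\mathcal{F}$ had nonempty interior in $V^*/Y$; since $\pr$ is an open map (it is a quotient by a group of translations acting by homeomorphisms), I would pull this open set back along $\pr$ to an open subset $U$ of $V^*$ contained in $\bigcup_{y\in Y}(y+F)\cap w\bigl(\bigcup_{y'\in Y}(y'+F)\bigr)$. Because $Y$ is a discrete lattice, the sets $y+F$ are locally finite, so $U$ meets some single translate, yielding an open subset contained in $(y+F)\cap(wy'+wF)=(y+F)\cap\mathrm{t}_{wy'}wF$. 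Both $\mathrm{t}_y$ and $\mathrm{t}_{wy'}w$ lie in $W_Y$, and they are distinct elements of $W_Y$ (their $W$-parts are $1$ and $w\ne 1$). This contradicts the empty-interior condition for the fundamental domain $F$ of $W_Y$, so no such $w$ exists.

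For the homeomorphism statement, I assume $F\cap((Y\setminus\{0\})+F)=\emptyset$ and show $\pr|_F$ is injective; since $\pr|_F$ is already a continuous surjection onto $\mathcal{F}$ from a space on which $\pr$ is a closed map, injectivity upgrades it to a homeomorphism. If $f,f'\in F$ satisfy $\pr(f)=\pr(f')$, then $f-f'=y\in Y$, so $f\in F\cap(y+F)$; the hypothesis forces $y=0$, hence $f=f'$. To conclude that a continuous closed bijection is a homeomorphism is then automatic.

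I expect the main obstacle to be the empty-interior step, specifically the passage from an interior point in the quotient $V^*/Y$ back to a genuine violation in $V^*$. The subtlety is that the translates $y+F$ can overlap on their boundaries, so I must use the local finiteness of the lattice tiling to isolate a single pair of $W_Y$-elements whose fixed sets overlap on an open set, and then carefully check that these two elements are genuinely distinct in $W_Y$ rather than accidentally coinciding (which is where the hypothesis on the $W$-parts being $1$ and $w\ne 1$ is used). The openness and properness of $\pr$ are the technical facts that make both directions go through cleanly, and I would want to state these explicitly at the outset.
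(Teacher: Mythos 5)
Your overall strategy is the same as the paper's: reduce the empty-interior condition in $V^*/Y$ to one in $V^*$ by writing the preimage of $\mathcal{F}\cap w\mathcal{F}$ as a union of sets of the form $\mathrm{t}_x\bigl(F\cap\mathrm{t}_{y-x}(wF)\bigr)$, each of which has empty interior because $\mathrm{t}_{y-x}w\in W_Y\setminus\{1\}$. The divergence — and the genuine gap — is in how you pass from ``each piece has empty interior'' to ``the union has empty interior.'' You assert that, because $Y$ is a discrete lattice, the family $\{y+F\}_{y\in Y}$ is locally finite. That is true when $F$ is bounded (e.g.\ a polytope), but it does not follow from discreteness of $Y$ alone, and the lemma is stated for an \emph{arbitrary} fundamental domain: the paper's definition (closed, connected, translates cover, overlaps have empty interior) does not force boundedness, and for an unbounded closed $F$ the lattice translates can accumulate — pick $f_n\in F$ with $|f_n|\to\infty$ and $y_n\in Y$ with $f_n+y_n$ in a fixed bounded region; a subsequence of $f_n+y_n$ converges, and every neighbourhood of the limit then meets infinitely many distinct translates $F+y_n$. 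The paper resolves exactly this point differently: the preimage is a \emph{countable} union of closed sets with empty interior, so Baire's theorem in the complete metric space $V^*$ gives that the union has empty interior, with no local finiteness needed. Your argument as written would need either this Baire step or an added hypothesis that $F$ is compact.

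A second, independent error: you claim that $\pr$ is ``the quotient by a cocompact lattice, hence a proper (closed) map.'' It is neither. The fibre over a point is a coset $v+Y$, an infinite discrete set, so $\pr$ is not proper; and the image of a closed set need not be closed, since a closed set plus a lattice need not be closed (e.g.\ $\{n+1/n\;;\;n\ge2\}+\Z$ in $\R$ contains $1/n$ for all $n$ but not $0$). $\pr$ is open — which you correctly use — but not closed. This flaw undermines both your proof that $\mathcal{F}$ is closed and your upgrade of the continuous bijection $\pr|_F$ to a homeomorphism. Your injectivity argument itself is correct and is precisely the content of the paper's ``straightforward verification''; to finish cleanly you should either restrict to compact $F$ (the only case the paper ever uses, where a continuous bijection onto a Hausdorff image is automatically a homeomorphism), or give an argument for continuity of the inverse that does not rely on $\pr$ being a closed map.
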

\begin{proof}
The fact that the $W$-translates of $\mathcal{F}$ do cover is obvious. Now let $1\ne w\in W$ and let us prove that $\mathcal{F}\cap w\mathcal{F}$ has no interior. Since the map $\pr$ is onto, it suffices to show that $\pr^{-1}(\mathcal{F}\cap w\mathcal{F})$ has empty interior in $V^*$. We compute
\[\pr^{-1}(\mathcal{F}\cap w\mathcal{F})=\pr^{-1}(\mathcal{F})\cap\pr^{-1}(w\mathcal{F})=(F+Y)\cap(wF+Y)\]
\[=\left(\bigcup_{x\in Y}\mathrm{t}_x(F)\right)\cap\left(\bigcup_{y\in Y}\mathrm{t}_y(wF)\right)=\bigcup_{x,y\in Y}\mathrm{t}_x(F\cap\mathrm{t}_{y-x}(wF)).\]
But since $W$ is finite, we have $W\cap\mathrm{t}(Y)=1$ and because $w\ne 1$, we have $w\notin \mathrm{t}(Y)$. As $F$ is a fundamental domain for $W_Y$, given each $y\in Y$, the closed set $F\cap \mathrm{t}_ywF$ has empty interior in $V^*$ and well as $\mathrm{t}_y(F\cap\mathrm{t}_{y-x}(wF))$. Since $Y$ is countable, the conclusion follows from Baire's theorem. The second statement is a straightforward verification.
\end{proof}

\begin{rem}
Of course, the previous result is \emph{auto-dual}, i.e. it still holds if we replace $V$ by $V^*$ and $Y$ by $X$.
\end{rem}

\subsection{The affine Weyl group}
\hfill

Standard references for what follows are \cite{bourbaki456}, \cite{humphreys-reflectiongroups}.

The affine Weyl group is an infinite extension of the Weyl group $W$ that contains more information about the root system $\Phi$ than $W$ but still has some of the good properties of $W$; namely it is still a Coxeter group. One can use the affine Weyl group to determine the subsystems of $\Phi$ and to compute the order of $W$. What follows is quite classical and may be found for example in \cite{kane}, \cite{humphreys-reflectiongroups} and \cite{bourbaki456}.

First, recall that the root system $\Phi$ admits a \emph{highest root}. That is, there exists a unique root $\alpha_0\in\Phi$ such that $\alpha\le\alpha_0$ for every $\alpha\in\Phi$. For this, see \cite[\S 10.4, Lemma A]{humlie} or \cite[\S 11.2]{kane}. Notice that some authors use the notation $\widetilde{\alpha}$ for the highest root, but the we rather choose $\alpha_0$ for notational purposes.

Given $\alpha\in\Phi$ and $k\in\Z$, define the following hyperplane of $V^*$:
\[H_{\alpha,k}:=\{\lambda\in V^*~;~\left<\lambda,\alpha\right>=k\}.\]
We also consider the orthogonal reflection $s_{\alpha,k}$ with respect to this hyperplane:
\[s_{\alpha,k}:=s_{\alpha}+k\mathrm{t}_{\alpha^\vee} : \lambda\mapsto \lambda-(\left<\lambda,\alpha\right>-k)\alpha^\vee,\]
with $\mathrm{t}_{\alpha^\vee}(\lambda):=\lambda+\alpha^\vee$.

\begin{definition}\label{defAWG}\emph{(\cite[\S 11.1]{kane})}
Define the \emph{affine root system} by $\Phi_{\mathrm{a}}:=\Phi\times\Z$, the \emph{positive affine roots} by $\Phi_{\mathrm{a}}^+:=\{(\alpha,n)\in\Phi_{\mathrm{a}}~;~n>0~\text{or}~n=0,~\alpha\in\Phi^+\}$ and the \emph{negative affine roots} by $\Phi_{\mathrm{a}}^-:=\Phi_{\mathrm{a}}\setminus\Phi_{\mathrm{a}}^+$.

If $\Pi=\{\alpha_1,\dotsc,\alpha_n\}$ and if $\alpha_0\in\Phi$ is the highest root of $\Phi$, then we consider the (affine) reflections
\[\forall 1\le i \le n,~s_i:=s_{\alpha_i,0}~~\text{and}~~s_0:=s_{\alpha_0,1}.\]
Then, the \emph{affine Weyl group} is the subgroup of affine transformations $\mathrm{Aff}(V^*)$ generated by these reflections:
\[W_{\mathrm{a}}:=\left<s_0,s_1\dotsc,s_n\right>.\]
\end{definition}

\begin{theo}\label{affineweylissemidirectandcoxeter}\emph{(\cite[\S 11.3]{kane})}
We have the following isomorphism
\[W_{\mathrm{a}}\simeq Q^\vee\rtimes W.\]
Furthermore, if $m_{i,j}$ denotes the order of $s_is_j$ in $W_{\mathrm{a}}$ (for $0\le i,j \le n$), then we have a presentation
\[W_{\mathrm{a}}=\left<s_0,s_1,\dotsc,s_n~|~(s_is_j)^{m_{i,j}}=1,~\forall i,j\right>.\]
In particular, $W_{\mathrm{a}}$ is a Coxeter group.
\end{theo}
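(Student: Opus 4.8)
The plan is to prove the two assertions separately: first the semidirect product decomposition $W_{\mathrm{a}}\simeq Q^\vee\rtimes W$, and then the Coxeter presentation. For the decomposition I would exploit the canonical splitting $\mathrm{Aff}(V^*)=\mathrm{t}(V^*)\rtimes \mathrm{GL}(V^*)$ and the associated ``linear part'' homomorphism $\ell:\mathrm{Aff}(V^*)\to\mathrm{GL}(V^*)$. Each generator $s_i=s_{\alpha_i,0}$ ($1\le i\le n$) is already linear, and $\ell(s_0)=\ell(s_{\alpha_0,1})=s_{\alpha_0}$; since $s_{\alpha_0}\in W=\langle s_{\alpha_1},\dots,s_{\alpha_n}\rangle$, we get $\ell(W_{\mathrm{a}})=\langle s_{\alpha_0},s_{\alpha_1},\dots,s_{\alpha_n}\rangle=W$. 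As $s_1,\dots,s_n\in W_{\mathrm{a}}$ generate a copy of $W$ inside $W_{\mathrm{a}}$ on which $\ell$ restricts to the identity, this provides a splitting, so that $W_{\mathrm{a}}=\mathrm{t}(L)\rtimes W$, where $\mathrm{t}(L)=W_{\mathrm{a}}\cap\mathrm{t}(V^*)=\ker(\ell|_{W_{\mathrm{a}}})$ and $L\subseteq V^*$ is a subgroup (discrete, since $W_{\mathrm{a}}$ acts properly discontinuously). It then remains to identify $L$ with $Q^\vee$.

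For $L=Q^\vee$ I would argue the two inclusions. The direct computation $s_0s_{\alpha_0}=\mathrm{t}_{\alpha_0^\vee}$ (apply the formula for $s_{\alpha,k}$ with $\alpha=\alpha_0$, $k=0,1$) shows $\mathrm{t}_{\alpha_0^\vee}\in W_{\mathrm{a}}$, hence $\alpha_0^\vee\in L$; and since $w\,\mathrm{t}_v\,w^{-1}=\mathrm{t}_{wv}$, the lattice $L$ contains the entire orbit $W\alpha_0^\vee$ and the sublattice it spans. Because the highest root $\alpha_0$ is long, its coroot $\alpha_0^\vee$ is a short coroot, and the orbit $W\alpha_0^\vee$ is exactly the set of short coroots (the coroots of the long roots form a single $W$-orbit). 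The standard fact that the short roots of an irreducible root system generate its root lattice, applied to $\Phi^\vee$, then gives $\langle W\alpha_0^\vee\rangle_{\Z}=Q^\vee$, whence $L\supseteq Q^\vee$. Conversely, every generator has linear part in $W$ and translation part in $Q^\vee$ (namely $0$ for $s_1,\dots,s_n$ and $\alpha_0^\vee$ for $s_0$); since $W$ preserves $Q^\vee$, the translation part of any product of generators again lies in $Q^\vee$, so $L\subseteq Q^\vee$.

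For the Coxeter presentation I would invoke the general theory of discrete affine reflection groups. The group $W_{\mathrm{a}}=Q^\vee\rtimes W$ acts properly discontinuously on $V^*$ by orthogonal affine transformations and is generated by the reflections $s_{\alpha,k}$ in the locally finite family of mirrors $H_{\alpha,k}$, which it permutes. The connected components of the complement of these hyperplanes (the alcoves) are therefore permuted by $W_{\mathrm{a}}$, and the closure of the fundamental alcove, namely $\{\lambda\in V^*~;~\langle\lambda,\alpha_i\rangle>0\text{ for }1\le i\le n\text{ and }\langle\lambda,\alpha_0\rangle<1\}$, is a simplex whose $n+1$ walls are precisely the mirrors of $s_0,\dots,s_n$. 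The fundamental theorem on reflection groups then yields simultaneously that $W_{\mathrm{a}}$ acts simply transitively on alcoves, that $s_0,\dots,s_n$ generate it, and that $(W_{\mathrm{a}},\{s_i\})$ is a Coxeter system whose only defining relations are the $(s_is_j)^{m_{i,j}}=1$, with $m_{i,j}$ read off from the dihedral angle between the $i$-th and $j$-th walls.

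I expect the main obstacle to be the Coxeter presentation, and precisely the claim that there are \emph{no} relations beyond the dihedral ones $(s_is_j)^{m_{i,j}}=1$. This is not formal: it rests on the geometric input that $W_{\mathrm{a}}$ acts \emph{simply} transitively on the set of alcoves, for which discreteness and the simplicial shape of the fundamental alcove are both essential, and which constitutes the technical core of the theory of affine reflection groups. By comparison, the identification $L=Q^\vee$ is routine, its only non-formal ingredient being the lattice-generation lemma for short coroots used in the inclusion $L\supseteq Q^\vee$.
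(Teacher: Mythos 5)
The paper does not actually prove this statement: it is quoted verbatim from \cite[\S 11.3]{kane}, so there is no internal proof to compare against, and your proposal should be judged against the standard textbook argument. Judged that way, it is correct, and it is essentially that argument with one genuine (and necessary) adaptation. Since the paper's Definition \ref{defAWG} generates $W_{\mathrm{a}}$ by the $n+1$ reflections $s_0,\dotsc,s_n$ only --- rather than by \emph{all} reflections $s_{\alpha,k}$, as Kane and Humphreys do --- the usual shortcut $\mathrm{t}_{\alpha^\vee}=s_{\alpha,1}s_{\alpha,0}$ for every $\alpha\in\Phi$ is not immediately available, and your patch is the right one: $s_0s_{\alpha_0}=\mathrm{t}_{\alpha_0^\vee}$, conjugation gives $\mathrm{t}_{w\alpha_0^\vee}$ for all $w\in W$, and then you need the non-formal input that the $W$-orbit of $\alpha_0^\vee$ (the coroots of long roots, i.e.\ the short coroots, since the highest root is long and roots of equal length are $W$-conjugate in an irreducible system) spans $Q^\vee$ over $\Z$; that lattice-generation fact is true and standard, and your converse inclusion $L\subseteq Q^\vee$ by tracking translation parts of words in the generators is fine. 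For the presentation, invoking the general theorem on discrete groups generated by affine reflections in a locally finite stable arrangement (Bourbaki V, \S 3) is exactly what the cited sources do, and it is not circular: that theorem makes no reference to root systems, and once Part 1 shows every $s_{\alpha,k}=\mathrm{t}_{k\alpha^\vee}s_\alpha$ lies in $W_{\mathrm{a}}$, the full arrangement $\{H_{\alpha,k}\}$ consists of mirrors of elements of $W_{\mathrm{a}}$, so the theorem applies and identifies $(W_{\mathrm{a}},\{s_0,\dotsc,s_n\})$ as a Coxeter system with only the dihedral relations. Two cosmetic slips, neither a gap: the set you call the closure of the fundamental alcove is written with strict inequalities (that is the open alcove; its closure is the paper's $\mathcal{A}_0$), and the parenthetical justification of discreteness of $L$ via proper discontinuity is both premature and unnecessary, since discreteness follows from the inclusion $L\subseteq Q^\vee$ that you prove immediately afterwards.
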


Similarly to the finite case, one defines the \emph{length} of an element of $W_{\mathrm{a}}$ as follows:
\[\forall w\in W_{\mathrm{a}},~\ell(w):=\left|\Phi_{\mathrm{a}}^-\cap w\Phi_{\mathrm{a}}^+\right|.\]

In this context, we have the important notion of an \emph{alcove}.
\begin{definition}\label{defalcove}\emph{(\cite[\S 11.5]{kane})}
An \emph{alcove} is the closure of a connected component of the complementary of the reflecting hyperplanes, i.e. a connected component of the set 
\[V^*\setminus\bigcup_{\substack{\alpha\in\Phi^+ \\ k\in\Z}}H_{\alpha,k}\]%=\bigcap_{\substack{\alpha\in\Phi^+ \\ k\in\Z}}V^*\setminus H_{\alpha,k}.\]
Among these, we consider the \emph{(closed) fundamental alcove}:
\begin{align*}
\mathcal{A}_0:&=\{\lambda\in V^*~;~\forall \alpha\in\Phi^+,~0\le\left<\lambda,\alpha\right>\le1\} \\
&=\{\lambda\in V^*~;~\forall \alpha\in\Pi,~\left<\lambda,\alpha\right>\ge0~\text{and}~\left<\lambda,\alpha_0\right>\le1\}
\end{align*}
A subset of the boundary of $\mathcal{A}_0$ of the form $\partial{\mathcal{A}_0}\cap H_{\alpha,0}$ or $\partial{\mathcal{A}_0}\cap H_{\alpha,1}$, for some $\alpha\in\Phi^+$, is called a \emph{wall} of $\mathcal{A}_0$.
\end{definition}

\begin{rem}\label{A0ispolytope}
If we decompose the highest root $\alpha_0\in\Phi^+$ in the basis $\Pi$ as
\[\alpha_0=\sum_{\alpha\in\Pi} n_\alpha \alpha,\]
then, it is shown in \cite[V, \S 2.2, Corollaire]{bourbaki456} that ${\mathcal{A}_0}$ is the convex polytope given by
\[{\mathcal{A}_0}=\conv\left(\{0\}\cup\left\{\frac{\varpi_\alpha^\vee}{n_\alpha}\right\}_{\alpha\in\Pi}\right).\]
%Moreover, one can determine the order of $W$ as follows (\cite[Theorem,\S 11.6]{kane}):
%\[|W|=n! \det(C) \prod_{\alpha\in\Pi}n_\alpha.\]
\end{rem}

The importance of the fundamental alcove relies on the following result:
\begin{theo}\label{fundalcovefunddom}\emph{(\cite[\S 4.5 and \S 4.8]{humphreys-reflectiongroups})}
The closed fundamental alcove ${\mathcal{A}_0}$ is a fundamental domain for $W_{\mathrm{a}}$ on $V^*$. Moreover, the group $W_\mathrm{a}$ acts simply transitively on open alcoves.
\end{theo}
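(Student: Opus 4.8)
The plan is to prove the two assertions together, using the semidirect product decomposition $W_{\mathrm{a}}\simeq Q^\vee\rtimes W$ of Theorem \ref{affineweylissemidirectandcoxeter} and the explicit simplex description of $\mathcal{A}_0$ from Remark \ref{A0ispolytope}, rather than invoking the general Coxeter-theoretic machinery. I will break the argument into a covering step, a triviality-of-stabilizer step, and an assembly step. Throughout I use that every reflection $s_{\alpha,k}=\mathrm{t}_{k\alpha^\vee}s_\alpha$ (for $\alpha\in\Phi$, $k\in\Z$) lies in $W_{\mathrm{a}}$, since $k\alpha^\vee\in Q^\vee$; consequently $W_{\mathrm{a}}$ permutes the reflecting hyperplanes $H_{\alpha,k}$ and hence permutes the open alcoves, i.e. the connected components of $V^*\setminus\bigcup_{\alpha,k}H_{\alpha,k}$.

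\emph{Covering step.} Fix a point $z_0$ in the open alcove $\interior{\mathcal{A}_0}$, so that $\left<z_0,\alpha\right>>0$ for $\alpha\in\Pi$ and $\left<z_0,\alpha_0\right><1$. Given any $\lambda\in V^*$, the orbit $W_{\mathrm{a}}\lambda$ is a discrete, closed subset of $V^*$ (as $W_{\mathrm{a}}$ is a discrete group of isometries for the $W$-invariant inner product, being an extension of the finite group $W$ by the lattice $Q^\vee$), so the distance to $z_0$ attains a minimum on it, say at $\mu=w\lambda$. I claim $\mu\in\mathcal{A}_0$: otherwise $\mu$ violates one of the defining inequalities of $\mathcal{A}_0$, that is, $\mu$ and $z_0$ lie strictly on opposite sides of one of the walls $H_{\alpha_i,0}$ or $H_{\alpha_0,1}$; reflecting $\mu$ in that wall produces a point of the orbit strictly closer to $z_0$, contradicting minimality. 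Hence $\lambda\in w^{-1}\mathcal{A}_0$, which proves $V^*=\bigcup_{w\in W_{\mathrm{a}}}w\mathcal{A}_0$, and also (applying the same argument to a point not lying on any hyperplane) that $W_{\mathrm{a}}$ acts transitively on open alcoves.

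\emph{Stabilizer step (the crux).} I must show that $w\mathcal{A}_0=\mathcal{A}_0$ implies $w=1$. Writing $w=\mathrm{t}_\mu\sigma$ with $\mu\in Q^\vee$ and $\sigma\in W$, and using that $\sigma$ fixes the origin, we get $w\cdot 0=\mu$; since $0\in\mathcal{A}_0$, this forces $\mu\in Q^\vee\cap\mathcal{A}_0$. The key computation is that $Q^\vee\cap\mathcal{A}_0=\{0\}$: for $\lambda\in Q^\vee$ the integers $\left<\lambda,\alpha\right>$ are nonnegative on $\Pi$ and $\left<\lambda,\alpha_0\right>=\sum_{\alpha\in\Pi}n_\alpha\left<\lambda,\alpha\right>\in\{0,1\}$; the value $0$ forces $\lambda=0$, while the value $1$ would force $\lambda=\varpi_\beta^\vee$ for some $\beta$ with $n_\beta=1$, contradicting the standard fact that such a minuscule fundamental coweight is nonzero in $P^\vee/Q^\vee$. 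Thus $\mu=0$ and $w=\sigma\in W$ stabilizes $\mathcal{A}_0$, hence stabilizes its interior, which lies in the open fundamental Weyl chamber; as $W$ acts simply transitively on its chambers, $\sigma=1$.

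\emph{Assembly.} Combining the two previous steps gives simple transitivity on open alcoves. For the fundamental-domain property: $\mathcal{A}_0$ is closed and convex, hence connected, and the covering step supplies the covering axiom. For the remaining axiom, let $1\ne w\in W_{\mathrm{a}}$; since $w$ permutes the open alcoves, $w\,\interior{\mathcal{A}_0}$ is an open alcove, which by the stabilizer step is distinct from, and therefore disjoint from, $\interior{\mathcal{A}_0}$; consequently $\mathcal{A}_0\cap w\mathcal{A}_0$ is contained in the common boundary and has empty interior. The main obstacle is the stabilizer step, and within it the identity $Q^\vee\cap\mathcal{A}_0=\{0\}$, which is exactly where the arithmetic of the coweight lattice (that minuscule coweights are nontrivial modulo $Q^\vee$) enters; everything else reduces to the properness of the action and the simple transitivity of the finite group $W$ on its chambers.
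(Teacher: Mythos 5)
Your covering and assembly steps are sound: the orbit $W_{\mathrm{a}}\lambda=\bigcup_{\sigma\in W}(\sigma\lambda+Q^\vee)$ is indeed a finite union of lattice translates, hence closed and discrete, and reflecting a closest orbit point across a violated wall (the reflections $s_i$, $s_0$ all lie in $W_{\mathrm{a}}$) strictly decreases the distance to an interior point $z_0$; this is the classical argument, and it is also how the covering half is obtained in \cite[\S 4.5 and \S 4.8]{humphreys-reflectiongroups}, which is all the paper itself offers for Theorem \ref{fundalcovefunddom} (the paper cites this result rather than proving it). Where you genuinely depart from the cited source is the stabilizer step: instead of the Coxeter-theoretic argument (length of $w$ equals the number of hyperplanes separating the fundamental alcove from its image, so a stabilizing element has length $0$), you use the decomposition $W_{\mathrm{a}}\simeq Q^\vee\rtimes W$ of Theorem \ref{affineweylissemidirectandcoxeter} to reduce everything to the identity $Q^\vee\cap\mathcal{A}_0=\{0\}$ together with simple transitivity of the finite group $W$ on its chambers. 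That reduction itself is correct.

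The gap is in how you justify $Q^\vee\cap\mathcal{A}_0=\{0\}$. Your computation correctly reduces it to the assertion that a minuscule fundamental coweight $\varpi_\beta^\vee$ (i.e.\ $n_\beta=1$) does not lie in $Q^\vee$, which you then invoke as a ``standard fact''. But the standard uniform proofs of that fact --- including the one in the reference this paper uses for minuscule theory, \cite[VI, \S 2.2, Proposition 5 and Chapter VI, Exercise 24]{bourbaki456} --- are themselves deduced from the theorem you are proving: one shows that every element of $P^\vee$ is $W_{\mathrm{a}}$-equivalent to a point of $P^\vee\cap\mathcal{A}_0$, and then uses the fact that each $W_{\mathrm{a}}$-orbit meets the closed alcove \emph{exactly once} (this is precisely Theorem \ref{fundalcovefunddom}, both the covering and the injectivity halves) to conclude that $\{0\}\cup\{\varpi_j^\vee~;~n_j=1\}$ represents the distinct classes of $P^\vee/Q^\vee$. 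The present paper runs the deduction in the same direction: in the proof of Proposition \ref{WfunddomforTsimplyconnectedcase}, the identity $\mathcal{A}_0\cap Q^\vee=\{0\}$ is \emph{derived from} the simple transitivity of $W_{\mathrm{a}}$ on alcoves, not used to prove it. So, as written, your argument is circular unless you supply an independent proof that $n_\beta=1$ implies $\varpi_\beta^\vee\notin Q^\vee$. Such a proof exists --- for instance, the $\beta$-th row of the inverse Cartan matrix is non-integral, which can be verified type by type from the tables --- but that verification is exactly the nontrivial arithmetic content your proof outsources, and I know of no uniform, case-free elementary argument for it that does not pass back through affine Weyl group theory. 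To repair the proof, either include that case-by-case check, or replace the stabilizer step by the separating-hyperplane/length argument of \cite[\S 4.5]{humphreys-reflectiongroups}.
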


Note the analogy with the notion of a chamber. Recall that a \emph{chamber} is a connected component of $V^*\setminus\bigcup_{\alpha\in\Phi^+}H_{\alpha,0}$. A fundamental domain for the action of $W$ on $V^*$ is given by the \emph{(closed) fundamental Weyl chamber} ${\mathcal{C}_0}$ defined by
\[\mathcal{C}_0=\{\lambda\in V^*~;~\forall\alpha\in\Pi,~\left<\lambda,\alpha\right>\ge0\}=\left\{\sum_{\alpha\in\Pi}\lambda_\alpha\varpi_\alpha~;~\lambda_\alpha\ge0\right\}.\]
This can be found in \cite[\S 5.2]{kane}.

\section{Fundamental domain for the action of $W$ on $V^*/Y$}

\subsection{The simply-connected case $Y=Q^\vee$}\label{subsec:simp_connected}
\hfill

The main result of this section is the following one:
\begin{prop}\label{WfunddomforTsimplyconnectedcase}
Assume that the irreducible root datum $(X,\Phi,Y,\Phi^\vee)$ is simply-connected (that is, $X=P$ is the weight lattice). Then the set $\pr(\mathcal{A}_0)$ is a fundamental domain for the action of $W$ on $V^*/Y$. Moreover the map restricted projection yields a homeomorphism $\pr(\mathcal{A}_0)\simeq\mathcal{A}_0$.
%\newline In particular, the decomposition \[V^*/Y=\coprod_{w\in W}w\cdot\pr(\overline{\mathcal{A}_0})\] is a $W$-equivariant triangulation of the torus group $V^*/Y$.
\end{prop}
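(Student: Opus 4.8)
The plan is to recognize $\pr(\mathcal{A}_0)$ as the projection of a fundamental domain for the \emph{affine} Weyl group, and then to invoke Lemma \ref{fromlatticetotorus}. First I would pin down the lattice $Y$. Since the datum is simply-connected we have $X=P$, and as the pairing is perfect, $Y=X^\wedge=P^\wedge$. Because the family of simple coroots $(\alpha^\vee)_{\alpha\in\Pi}$ is exactly the dual basis of the fundamental weights $(\varpi_\alpha)_{\alpha\in\Pi}$, one reads off $P^\wedge=\bigoplus_{\alpha\in\Pi}\Z\alpha^\vee=Q^\vee$. Hence $Y=Q^\vee$, and the intermediate group $W_Y=\mathrm{t}(Q^\vee)\rtimes W$ coincides, as a subgroup of $\mathrm{Aff}(V^*)$, with the affine Weyl group $W_{\mathrm{a}}$ via Theorem \ref{affineweylissemidirectandcoxeter}.

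With this identification in hand, Theorem \ref{fundalcovefunddom} says that $\mathcal{A}_0$ is a fundamental domain for $W_{\mathrm{a}}=W_Y$ acting on $V^*$. Applying the first part of Lemma \ref{fromlatticetotorus} with $F=\mathcal{A}_0$ then yields at once that $\pr(\mathcal{A}_0)$ is a fundamental domain for the induced $W$-action on $V^*/Y$, which settles the first assertion. For the homeomorphism I would use the second part of the same lemma: it suffices to check the disjointness condition $\mathcal{A}_0\cap\bigl((Q^\vee\setminus\{0\})+\mathcal{A}_0\bigr)=\emptyset$. This condition is precisely the injectivity of $\pr$ on $\mathcal{A}_0$, since two points of $\mathcal{A}_0$ with the same image differ by an element of $Q^\vee$, and it is the crux of the argument.

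To establish this injectivity, suppose $\lambda,\mu\in\mathcal{A}_0$ satisfy $\lambda-\mu\in Q^\vee$. Then $\lambda=\mathrm{t}_{\lambda-\mu}(\mu)$ with $\mathrm{t}_{\lambda-\mu}\in W_{\mathrm{a}}$, so $\lambda$ and $\mu$ lie in a common $W_{\mathrm{a}}$-orbit; since $\mathcal{A}_0$ meets each $W_{\mathrm{a}}$-orbit in exactly one point, we conclude $\lambda=\mu$ and $\lambda-\mu=0$. The disjointness hypothesis of Lemma \ref{fromlatticetotorus} is thus met, giving the homeomorphism (concretely, $\pr$ is a continuous bijection from the compact $\mathcal{A}_0$ onto $\pr(\mathcal{A}_0)$ inside the Hausdorff quotient $V^*/Y$). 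The main obstacle is precisely this last step: the paper's working definition of \emph{fundamental domain} (overlaps have empty interior) is strictly weaker than what is needed, so I must appeal to the \emph{strong} form of the fundamental-domain property---each orbit meets $\mathcal{A}_0$ exactly once, which is part of the results cited for Theorem \ref{fundalcovefunddom}. I would flag this explicitly; a direct verification, forcing $\lambda=\mu$ from the defining inequalities $\langle\cdot,\alpha\rangle\ge0$ and $\langle\cdot,\alpha_0\rangle\le1$ together with the integrality of $\langle Q^\vee,\Pi\rangle$, is possible but essentially reproves that strong form and is less transparent.
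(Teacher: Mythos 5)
Your proof is correct, but it handles the crucial injectivity step by a genuinely different route than the paper's. Where you invoke the \emph{strict} fundamental-domain property of the closed alcove (each $W_{\mathrm{a}}$-orbit meets $\mathcal{A}_0$ in exactly one point) --- a statement stronger than what the paper's Theorem \ref{fundalcovefunddom} records, though indeed proved in the cited source \cite[\S 4.8]{humphreys-reflectiongroups} --- the paper stays within its own stated results: it first proves the auxiliary Lemma \ref{diffovA0elementsarelinkedbyW}, showing that for $\lambda,\lambda'\in\mathcal{A}_0$ some $w\in W$ sends $\lambda-\lambda'$ into $\mathcal{A}_0$, which (since $W$ preserves $Q^\vee$) reduces everything to the identity $\mathcal{A}_0\cap Q^\vee=\{0\}$; this in turn is proved by observing that for $q\in\mathcal{A}_0\cap Q^\vee$ the alcove $\mathrm{t}_{-q}\mathcal{A}_0$ contains $0$, hence equals $w\mathcal{A}_0$ for some $w\in W$, and the simple transitivity of $W_{\mathrm{a}}$ on alcoves then forces $\mathrm{t}_{-q}=w\in \mathrm{t}(Q^\vee)\cap W=\{1\}$. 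What your route buys is brevity and transparency: you correctly identified, and honestly flagged, that the orbit-meets-once property is exactly what is needed and that the paper's working definition of fundamental domain (overlaps of empty interior) is too weak to yield it. What the paper's route buys is self-containment relative to its stated theorems (only simple transitivity on alcoves is used, which is explicitly part of Theorem \ref{fundalcovefunddom}) plus a reusable tool: Lemma \ref{diffovA0elementsarelinkedbyW} is invoked again later, e.g.\ in the proof of Proposition \ref{characterizationwi}. Your preliminary identification $Y=X^\wedge=Q^\vee$ and the appeal to Lemma \ref{fromlatticetotorus} match the paper exactly.
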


Only the second part of the last statement deserves a proof. First, we need a small technical lemma:
\begin{lem}\label{diffovA0elementsarelinkedbyW}
The following holds
\[\forall \lambda,\lambda'\in{\mathcal{A}_0},~\exists w\in W~;~w(\lambda-\lambda')\in {\mathcal{A}_0}.\]
\end{lem}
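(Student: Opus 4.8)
The plan is to set $\mu:=\lambda-\lambda'$ and to show directly that some $W$-translate of $\mu$ lands in $\mathcal{A}_0$. The key observation is that membership in $\mathcal{A}_0$ is governed entirely by the pairings $\left<\cdot,\alpha\right>$ against positive roots, and that subtracting two elements of $\mathcal{A}_0$ keeps all these pairings within a controlled band. First I would note that, since both $\lambda$ and $\lambda'$ lie in $\mathcal{A}_0$, for every $\alpha\in\Phi^+$ we have $0\le\left<\lambda,\alpha\right>\le1$ and $0\le\left<\lambda',\alpha\right>\le1$, whence $\left<\mu,\alpha\right>=\left<\lambda,\alpha\right>-\left<\lambda',\alpha\right>\in[-1,1]$. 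As $\Phi=\Phi^+\sqcup(-\Phi^+)$, this amounts to $|\left<\mu,\alpha\right>|\le1$ for every root $\alpha\in\Phi$.

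Next I would exploit that this band condition is $W$-invariant: for any $w\in W$ and any $\alpha\in\Phi$ we have $\left<w\mu,\alpha\right>=\left<\mu,w^{-1}\alpha\right>$, and $w^{-1}\alpha$ is again a root, so $|\left<w\mu,\alpha\right>|\le1$ holds for all $\alpha\in\Phi$ and all $w\in W$ simultaneously. It then remains only to fix the \emph{signs} of these pairings, which is exactly the role of the fundamental Weyl chamber. Using that $\mathcal{C}_0$ is a fundamental domain for $W$ acting on $V^*$, I would choose $w\in W$ with $w\mu\in\mathcal{C}_0$, so that $\left<w\mu,\alpha\right>\ge0$ for every simple root $\alpha\in\Pi$. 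Since each positive root is a non-negative combination of simple roots, writing $\beta=\sum_{\alpha\in\Pi}c_\alpha\alpha$ with $c_\alpha\ge0$ gives $\left<w\mu,\beta\right>=\sum_{\alpha\in\Pi}c_\alpha\left<w\mu,\alpha\right>\ge0$ for every $\beta\in\Phi^+$.

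Combining the two facts, for this choice of $w$ we obtain $0\le\left<w\mu,\alpha\right>\le1$ for all $\alpha\in\Phi^+$, which is precisely the defining inequality system of $\mathcal{A}_0$; hence $w(\lambda-\lambda')=w\mu\in\mathcal{A}_0$, as desired. I do not expect a genuine obstacle here: the entire argument rests on the single numerical remark that differences of $\mathcal{A}_0$-elements have all root-pairings in $[-1,1]$, after which the Weyl chamber supplies the correct representative. The only point deserving care is keeping the two ingredients conceptually separate, namely the $W$-invariance of the band $|\left<\cdot,\alpha\right>|\le1$ on one hand and the sign information coming from $\mathcal{C}_0$ on the other, rather than attempting to move $\mu$ into $\mathcal{A}_0$ directly via the affine Weyl group (which would also succeed but is heavier than needed).
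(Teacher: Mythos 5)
Your proposal is correct and follows essentially the same argument as the paper: set $\mu=\lambda-\lambda'$, observe that all root pairings of $\mu$ lie in $[-1,1]$ and that this band is $W$-invariant, then use the fundamental chamber $\mathcal{C}_0$ to pick $w$ with $w\mu\in\mathcal{C}_0$ and conclude $w\mu\in\mathcal{A}_0$. The only (cosmetic) difference is that you verify the defining inequalities of $\mathcal{A}_0$ over all positive roots, whereas the paper invokes the equivalent characterization via simple roots together with the single bound $\left<w\mu,\alpha_0\right>\le1$.
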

\begin{proof}
Let $\mu:=\lambda-\lambda'$. Since, $\lambda,\lambda'\in{\mathcal{A}_0}$, we have $-1\le \left<\mu,\alpha\right>\le 1$ for $\alpha\in\Phi^+$ and this still holds for any root $\alpha\in\Phi$ because $\Phi^-=-\Phi^+$. Thus, by $W$-invariance of the pairing $\left<\cdot,\cdot\right>$, we get 
\[\forall \alpha\in\Phi,~\forall w\in W,~-1\le\left<w(\mu),\alpha\right>\le1\]
and in particular, we have also $\left<w(\mu),\alpha_0\right>\in[-1,1]$.

Now, as ${\mathcal{C}_0}$ is a fundamental domain for the action of $W$ on $V^*$, there is some $w\in W$ such that $w(\mu)\in{\mathcal{C}_0}$. But we have seen that $-1\le\left<w(\mu),\alpha_0\right>\le 1$ and thus $w(\mu)\in\mathcal{A}_0$.
\end{proof}

\begin{rem}
The proof above shows in particular that an element $w\in W$ such that $w(\lambda-\lambda')\in \mathcal{A}_0$ has length $\ell(w)=|\{\alpha\in\Phi^+~;~\left<\lambda,\alpha\right><\left<\lambda',\alpha\right>\}|$. Moreover, since the \emph{longest element} $w_0\in W$ is characterized by $w_0(\Phi^+)=\Phi^-$, we have $-\mathcal{A}_0=w_0\mathcal{A}_0$.
\end{rem}

\emph{Proof of proposition \ref{WfunddomforTsimplyconnectedcase}.} In view of the second part of the Lemma \ref{fromlatticetotorus}, we have to prove that for $\lambda,\lambda'\in\mathcal{A}_0$, we have $\lambda=\lambda'$ as soon as $\lambda-\lambda'\in Q^\vee$. Using the Lemma \ref{diffovA0elementsarelinkedbyW}, we find $w\in W$ such that $w(\lambda-\lambda')\in{\mathcal{A}_0}$. Since $W$ acts on $Q^\vee$, we are left to prove that ${\mathcal{A}_0}\cap Q^\vee=\{0\}$.

Next, recall that the Weyl chamber $\mathcal{C}_0$ is a union of alcoves (see \cite[\S 4.3]{humphreys-reflectiongroups}) and since $W$ acts simply transitively on the set of chambers, for every alcove $\mathcal{A}$ there is a unique $w\in W$ such that $w\mathcal{A}\subset\mathcal{C}_0$. Hence, if $0\in{\mathcal{A}}$, then $0\in{w\mathcal{A}}$ and therefore $w\mathcal{A}=\mathcal{A}_0$ (see \cite[VI, \S 2.2, Proposition 4]{bourbaki456}). We have shown that for any alcove $\mathcal{A}$ containing $0$, there exists $w\in W$ such that $\mathcal{A}=w(\mathcal{A}_0)$.

Finally, let $q\in{\mathcal{A}_0}\cap Q^\vee$. Then $\mathrm{t}_{-q}(\mathcal{A}_0)$ is an alcove containing $0$. The above discussion ensures the existence of $w\in W$ such that $\mathrm{t}_{-q}\mathcal{A}_0=w\mathcal{A}_0$ and since $W_{\mathrm{a}}$ acts simply transitively on the set of alcoves (cf \cite[\S 11.5, Proposition B]{kane}), this implies that $\mathrm{t}_qw=1$ and since $\mathrm{t}(Q^\vee)\cap W=1$, we get $\mathrm{t}_q=1$ so $q=0$.
\qed

We arrive to the main result of this section, which summarizes the discussion above:
\begin{theo}\label{WtriangulationTsimpconncase}
Let $K$ be a simply-connected simple compact Lie group, $T\le K$ a maximal torus, $W:=N_K(T)/T$ its Weyl group, $\mathcal{A}_0\subset i\Lie{t}$ the (dual) fundamental alcove and $\exp : i\Lie{t}\to T$, $x\mapsto e^{2i\pi x}$ the normalized exponential map. Then ${\mathcal{A}_0}$ is an $r$-simplex (with $r=\dim(T)$) and its image $\exp({\mathcal{A}_0})\subset T$ is a fundamental domain for $W$, homeomorphic to ${\mathcal{A}_0}$.%In particular, the decomposition \[T=\coprod_{w\in W}w\cdot\exp(\overline{\mathcal{A}_0})\] is a $W$-equivariant triangulation of the torus $T$.
\end{theo}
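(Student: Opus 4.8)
The plan is to treat this theorem as the synthesis of the material developed in this section, so that the only genuine verifications are the simplicial nature of $\mathcal{A}_0$ and the compatibility of the various identifications. I would split the argument into the three assertions: that $\mathcal{A}_0$ is an $r$-simplex, that $\exp(\mathcal{A}_0)$ is a fundamental domain for $W$ on $T$, and that $\exp$ restricts to a homeomorphism on $\mathcal{A}_0$.

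For the simplex claim, I would invoke Remark \ref{A0ispolytope}, which exhibits $\mathcal{A}_0 = \conv(\{0\} \cup \{\varpi_\alpha^\vee/n_\alpha\}_{\alpha \in \Pi})$ as the convex hull of exactly $|\Pi| + 1 = \dim(V^*) + 1$ points. Since the fundamental coweights $(\varpi_\alpha^\vee)_{\alpha \in \Pi}$ form a basis of $V^* = i\Lie{t}$ and each $n_\alpha$ is a nonzero integer, the rescaled vectors $(\varpi_\alpha^\vee / n_\alpha)_{\alpha \in \Pi}$ again form a basis; adjoining the origin therefore yields $r + 1$ affinely independent points, where $r = \dim(V^*) = \dim(T)$. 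Hence $\mathcal{A}_0$ is an $r$-simplex.

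For the remaining two claims, I would first translate the topological hypothesis into lattice-theoretic terms: since $K$ is simply-connected, the isomorphism $P/X(T) \simeq \pi_1(K)$ of (\ref{pi1andZ}) gives $X(T) = P$, whence $Y = X(T)^\wedge = P^\wedge = Q^\vee$, in accordance with the section title. The normalized exponential map of Lemma \ref{ker(exp)} then descends to a $W$-equivariant isomorphism $V^*/Q^\vee \xrightarrow{\sim} T$, through which $\exp$ factors as the projection $\pr : V^* \twoheadrightarrow V^*/Q^\vee$ followed by this isomorphism; in particular $\exp(\mathcal{A}_0)$ is the image of $\pr(\mathcal{A}_0)$ under a $W$-equivariant homeomorphism. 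Now Proposition \ref{WfunddomforTsimplyconnectedcase} asserts precisely that $\pr(\mathcal{A}_0)$ is a fundamental domain for $W$ on $V^*/Y$ and that $\pr$ restricts to a homeomorphism $\mathcal{A}_0 \xrightarrow{\sim} \pr(\mathcal{A}_0)$. Transporting both statements along the $W$-equivariant isomorphism, which carries fundamental domains to fundamental domains (both defining conditions being preserved by an equivariant homeomorphism), yields that $\exp(\mathcal{A}_0)$ is a fundamental domain for $W$ on $T$ and that $\exp : \mathcal{A}_0 \to \exp(\mathcal{A}_0)$ is a homeomorphism, being the composite of two homeomorphisms.

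The substance of the argument was already carried out in Proposition \ref{WfunddomforTsimplyconnectedcase}, and through it in Lemmas \ref{fromlatticetotorus} and \ref{diffovA0elementsarelinkedbyW}, so I do not expect a serious obstacle here. The only points demanding care are bookkeeping ones: confirming the identifications $V^* = i\Lie{t}$ and $Y = Q^\vee$ under which the abstract torus $V^*/Y$ is matched with the geometric maximal torus $T$, and the affine independence needed for the simplex claim. The latter is the closest thing to a genuine step, but it reduces immediately to the fact that the fundamental coweights form a basis of $V^*$.
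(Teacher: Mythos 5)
Your proposal is correct and takes essentially the same route as the paper, which states this theorem without a separate proof, presenting it as a summary of the discussion preceding it. Your synthesis --- the simplex claim via Remark \ref{A0ispolytope} together with the affine independence of the rescaled fundamental coweights, the identification $X(T)=P$ and hence $Y=Q^\vee$ coming from simple connectedness via (\ref{pi1andZ}), and the transport of Proposition \ref{WfunddomforTsimplyconnectedcase} along the $W$-equivariant homeomorphism of Lemma \ref{ker(exp)} --- is exactly the intended argument.
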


\subsection{The adjoint case $Y=P^\vee$: the fundamental polytope of Komrakov--Premet}\label{subsec::adjoint}
\hfill

We are looking for a fundamental domain for the action of the \emph{extended affine Weyl group} $\widehat{W_\mathrm{a}}:=\mathrm{t}(P^\vee)\rtimes W$ on $V^*$ in the case where $Y=P^\vee$ is the coweight lattice. Things get trickier in this case, because $\widehat{W_\mathrm{a}}$ is no longer a reflection group in general.%(except in type $B_n$ as we shall see).

After the study of this question and thanks to \cite{hm}, the author realized that the main result of this section (Corollary \ref{funddomforextendedaffineweylgroup} below) was known since 1984 and is due to Komrakov and Premet \cite{komrakov-premet}. For the sake of self-containment and as the present work is independent of the one of Komrakov--Premet, we give a new full proof. Our method is quite close to the one used in \cite{komrakov-premet} but in addition, we describe the fundamental domain as a convex hull, which is a new result, as far as the author knows.

The group $\widehat{W_\mathrm{a}}$ acts on alcoves (transitively since $W_\mathrm{a}\unlhd\widehat{W_\mathrm{a}}$ does) but not simply-transitively. We introduce the stabilizer
\[\Omega:=\{\phi\in\widehat{W_\mathrm{a}}~;~\phi\cdot\mathcal{A}_0=\mathcal{A}_0\}\]
and we see that we have a decomposition $\widehat{W_\mathrm{a}}\simeq W_\mathrm{a}\rtimes\Omega$. In particular, one has
\[\Omega\simeq \widehat{W_\mathrm{a}}/W_\mathrm{a}\simeq P^\vee/Q^\vee\simeq P/Q.\]
Thus, $\Omega$ is a finite abelian group. The Table \ref{fund_groups} details the fundamental groups of the irreducible root systems.

\begin{center}\label{fundgroups}
\begin{tabular}{|c|c|}
\hline
Type & $\Omega\simeq P/Q$ \\
\hline
\hline
$A_n~(n\ge1)$ & $\Z/(n+1)\Z$ \\
\hline
$B_n~(n\ge2)$ & $\Z/2\Z$ \\
\hline
$C_n~(n\ge3)$ & $\Z/2\Z$ \\
\hline
$D_{2n}~(n\ge2)$ & $\Z/2\Z \oplus \Z/2\Z$ \\
\hline
$D_{2n+1}~(n\ge2)$ & $\Z/4\Z$ \\
\hline
$E_6$ & $\Z/3\Z$ \\
\hline
$E_7$ & $\Z/2\Z$ \\
\hline
$E_8$ & $1$ \\
\hline
$F_4$ & $1$ \\
\hline
$G_2$ & $1$ \\
\hline
\end{tabular}
\captionof{table}{Fundamental groups of irreducible root systems}\label{fund_groups}
\end{center}

The description of $\Omega$ given in \cite[VI, \S 2.3]{bourbaki456} will be useful. First, to simplify the notations, we denote by $r:=\mathrm{rk}(\Phi)=\dim(V)$ the rank of $\Phi$ and we index the simple roots by $I:=\{1,\dotsc,r\}$, that is 
\[\Pi=\{\alpha_1,\dotsc,\alpha_r\}.\]
Given the highest root $\alpha_0=\sum_{i=1}^r n_i\alpha_i$ of $\Phi$, recall that a weight $\varpi_i$ is called \emph{minuscule} if $n_i=1$ and that minuscule weights form a set of representatives of the classes in $P/Q$ (see \cite[Chapter VI, Exercise 24]{bourbaki456}). Dually, we have the same notion and result for \emph{minuscule coweights}. Let 
\[J:=\{i \in I~;~n_i=1\}.\]

\begin{prop-def}\emph{(\cite[VI, \S 2.3, Proposition 6]{bourbaki456})}\label{descriptionOmega}
Let $\alpha_0=\sum_{i\in I} n_i\alpha_i$ be the highest root of $\Phi$ and $w_0\in W$ be the longest element. For $i\in I$, denote by $W_i\le W$ the Weyl group of the subsystem of $\Phi$ generated by $\{\alpha_j~;~j\ne i\}\subset\Pi$. For $i\in J$, let $w^i_0\in W_i$ be the longest element of $W_i$ and $w_i:=w^i_0w_0$. 

Then the element $\mathrm{t}_{\varpi_i^\vee}w_i\in\widehat{W_\mathrm{a}}$ is in $\Omega$ and the map
\[\begin{array}{ccc}
J & \to & \Omega\setminus\{1\} \\ i & \mapsto & \omega_i:=\mathrm{t}_{\varpi_i^\vee}w_i \end{array}\]
is a bijection.
\end{prop-def}

\begin{rem}
Note that, with the preceding notations, $w^i_0$ and $w_0$ have order $2$ and $w_i^{-1}=w_0w^i_0$ sends $\varpi_i^\vee$ to $-{\mathcal{A}_0}$. Indeed, denoting by $\Phi_i:=\Phi\cap\Z\left<\Pi\setminus\{\alpha_i\}\right>$ the subsystem of $\Phi$ generated by the simple roots other than $\alpha_i$, for $j\ne i$ we have $w^i_0\in W(\Phi_i)$ and
\[\left<w_0^i\varpi_i^\vee,\alpha_j\right>=\left<\varpi_i^\vee,w_0^i\alpha_j\right>=0,\]
because $w^i_0\alpha_j\in\Phi_i$. Since $w_0^i\varpi_i^\vee\in P^\vee$ and $W$ stabilizes the set 
\[\{\lambda\in V^*~;~\forall \alpha\in\Phi,~-1\le\left<\lambda,\alpha\right>\le 1\},\]
we have that $\left<w_0^i\varpi_i^\vee,\alpha_i\right>$ is an integer between $-1$ and $1$. Of course, it is not $0$ and if it is $-1$, then we have $w_0^i\alpha_i=-\alpha_i$, so $w_0^i(\Pi)\subset\Phi^-$ and thus $w_0^i=w_0$, a contradiction. Hence, we have $w_0^i\varpi_i^\vee=\varpi_i^\vee$ and thus $w_i^{-1}\varpi_i^\vee=w_0w_0^i\varpi_i^\vee=w_0\varpi_i^\vee\in -{\mathcal{A}_0}$ since $w_0\alpha_i\in\Phi^-$.
\end{rem}

We shall prove the following result:
\begin{prop}[{\cite[Theorem, (1)-(3)]{komrakov-premet}}]\label{funddomforOmega}
The subset of $\mathcal{A}_0$ defined by
\[F_{P^\vee}:=\{\lambda\in\mathcal{A}_0~;~\forall i\in J,~\left<\lambda,\alpha_i+\alpha_0\right>\le1\}\]
is a fundamental domain for the action of $\Omega$ on $\mathcal{A}_0$.

Moreover, for $\lambda\in F_{P^\vee}$, the elements $\omega_i^{-1}\in\Omega$ that send $\lambda$ into $F_{P^\vee}$ are precisely those with $\left<\lambda,\alpha_i+\alpha_0\right>=1$. In particular, if $\lambda$ is an interior point and $1\ne \widehat{w}\in \widehat{W_{\rm a}}$, then $\widehat{w}(\lambda)\notin F_{P^\vee}$.
\end{prop}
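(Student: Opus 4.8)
The plan is to introduce barycentric coordinates on the simplex $\mathcal{A}_0$ and translate both the defining inequalities of $F_{P^\vee}$ and the action of $\Omega$ into purely combinatorial statements about these coordinates. Writing the vertices of $\mathcal{A}_0$ as $v_0:=0$ and $v_i:=\varpi_i^\vee/n_i$ for $i\in I$ (Remark \ref{A0ispolytope}), every $\lambda\in\mathcal{A}_0$ has unique barycentric coordinates $(t_v)_v$, and a direct computation using $\left<\varpi_i^\vee,\alpha_j\right>=\delta_{ij}$ gives $t_i=n_i\left<\lambda,\alpha_i\right>$ for $i\in I$ together with $t_0=1-\left<\lambda,\alpha_0\right>$. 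For $i\in J$ one has $n_i=1$, whence $\left<\lambda,\alpha_i+\alpha_0\right>=t_i+1-t_0$, so that
\[F_{P^\vee}=\{\lambda\in\mathcal{A}_0~;~\forall i\in J,~t_i\le t_0\}.\]
Thus $F_{P^\vee}$ is exactly the locus where the coordinate of the base vertex $v_0=0$ dominates those of the minuscule vertices $v_i$, $i\in J$.

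Second, I would record the action of $\Omega$ on vertices. Each $\omega\in\Omega$ is an affine automorphism of $\mathcal{A}_0$, hence permutes its vertices and transforms barycentric coordinates by the rule \enquote{the coordinate of $v$ in $\omega\lambda$ equals the coordinate of $\omega^{-1}(v)$ in $\lambda$}. By Proposition-Definition \ref{descriptionOmega} one has $\omega_i(0)=\varpi_i^\vee=v_i$, so the $|\Omega|=|J|+1$ elements of $\Omega$ send $0$ to the distinct vertices of $S:=\{0\}\cup\{v_i\}_{i\in J}$; consequently $S$ is a single $\Omega$-orbit and the stabiliser of $0$ is trivial, i.e. $\Omega$ acts simply transitively on $S$. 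The key structural fact is therefore that $\Omega$ fixes $S$ setwise and permutes the coordinates $(t_s)_{s\in S}$ simply transitively.

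Third, the fundamental-domain property follows formally. For the covering, given $\lambda\in\mathcal{A}_0$ I choose $s^\ast\in S$ with $t_{s^\ast}$ maximal and take the unique $\omega\in\Omega$ with $\omega(s^\ast)=0$; then the $0$-coordinate of $\omega\lambda$ equals $t_{s^\ast}$, which dominates every $S$-coordinate of $\omega\lambda$ (as $\omega^{-1}$ preserves $S$), so $\omega\lambda\in F_{P^\vee}$. For the empty-interior condition, if $\lambda,\omega\lambda\in F_{P^\vee}$ with $\omega\ne1$, then both $t_0$ and $t_{\omega^{-1}(0)}$ are maximal among $(t_s)_{s\in S}$, forcing the single linear relation $t_{\omega^{-1}(0)}=t_0$ with $\omega^{-1}(0)\ne 0$, so $F_{P^\vee}\cap\omega F_{P^\vee}$ lies in a hyperplane and has empty interior. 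The same bookkeeping yields the \enquote{Moreover}: the $0$-coordinate of $\omega_i^{-1}\lambda$ is the coordinate of $\omega_i(0)=v_i$ in $\lambda$, namely $t_i$, so $\omega_i^{-1}\lambda\in F_{P^\vee}$ iff $t_i$ is maximal among $(t_s)_{s\in S}$, i.e. (since $\lambda\in F_{P^\vee}$ already gives $t_i\le t_0$) iff $t_i=t_0$, which is precisely $\left<\lambda,\alpha_i+\alpha_0\right>=1$.

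Finally, for the \enquote{in particular}, suppose $\widehat{w}(\lambda)\in F_{P^\vee}$ and decompose $\widehat{w}=w_{\mathrm{a}}\omega$ via $\widehat{W_\mathrm{a}}\simeq W_\mathrm{a}\rtimes\Omega$. An interior point $\lambda$ of $F_{P^\vee}$ lies in the open alcove $\interior{\mathcal{A}_0}$ and satisfies $t_i<t_0$ strictly for all $i\in J$; since $\omega$ permutes vertices, $\omega\lambda$ is again interior to $\mathcal{A}_0$, and as $w_\mathrm{a}(\omega\lambda)=\widehat{w}(\lambda)\in\mathcal{A}_0$, simple transitivity of $W_\mathrm{a}$ on open alcoves (Theorem \ref{fundalcovefunddom}) forces $w_\mathrm{a}=1$, hence $\widehat{w}=\omega$. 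If $\omega\ne1$ then $\omega^{-1}(0)=v_i$ for some $i\in J$, and $\omega\lambda\in F_{P^\vee}$ would impose $t_i=t_0$, contradicting strictness; thus $\widehat{w}=1$, against the hypothesis. The only genuinely delicate step is the second one—pinning down the $\Omega$-action on the vertices and its simple transitivity on $S$—but this is exactly what Proposition-Definition \ref{descriptionOmega} and the preceding Remark provide; once it is in hand, everything reduces to the elementary combinatorics of a finite group permuting barycentric coordinates.
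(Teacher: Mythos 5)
Your proof is correct, and it takes a genuinely different, more elementary route than the paper's. The paper's argument rests on the root-level Lemmas \ref{actionwimonroots} and \ref{actionwionroots} (how the Weyl parts $w_i$ permute $\Pi\cup\{-\alpha_0\}$), and then checks covering and empty interior by explicit pairing computations such as $\left<\omega_i(\lambda),\alpha_0+\alpha_i\right>=2-\left<\lambda,\alpha_0+\alpha_k\right>$. You bypass those lemmas entirely: your two ingredients are the barycentric dictionary $t_i=n_i\left<\lambda,\alpha_i\right>$, $t_0=1-\left<\lambda,\alpha_0\right>$ (so $F_{P^\vee}$ is exactly the locus where $t_0$ dominates the coordinates indexed by $S=\{0\}\cup\{\varpi_j^\vee\}_{j\in J}$), and the observation that $S$ is the $\Omega$-orbit of the origin, on which $\Omega$ acts simply transitively --- immediate from $\omega_i(0)=\varpi_i^\vee$ and $|\Omega|=|J|+1$ (Proposition-Definition \ref{descriptionOmega}) by orbit--stabilizer counting; this also replaces the paper's Lemma \ref{Omegaactsonvertices}. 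Since affine automorphisms of the simplex permute barycentric coordinates, all three assertions then reduce to the combinatorics of a finite group permuting the coordinates $(t_s)_{s\in S}$, plus the standard open-alcove argument (Theorem \ref{fundalcovefunddom}) to kill the $W_{\rm a}$-part in the \enquote{in particular}. Your covering choice --- move a maximal $S$-coordinate into the $0$ slot --- is the paper's selection of $i$ maximizing $\left<\lambda,\alpha_i\right>$ over $J$ in disguise, with the origin included so that the two cases of the paper's proof merge. The trade-off: the paper's heavier lemmas are reused downstream to identify $\Omega$ with a group of automorphisms of the extended Dynkin diagram and to compute the explicit permutations $\sigma_i$ (Corollaries \ref{pi(Omega)=Aut(Dyn0)} and \ref{explicit_action}, Proposition \ref{intersectionsalongfacets}, Table \ref{extendeddynkindiagrams}), which your argument does not provide: you get simple transitivity on $S$ but not which vertex goes where. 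As a proof of the present proposition alone, yours is cleaner; the only cosmetic omission is the one-line remark that $F_{P^\vee}$ is closed and connected, being an intersection of closed half-spaces and hence a convex polytope.
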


This will be done step-by-step. Once and for all, we fix the notation
\[\alpha_0=\sum_{i=1}^rn_i\alpha_i\]
for the highest root of $\Phi$. First, we have to obtain more information about how $\Omega$ acts on $\mathcal{A}_0$ and how its Weyl part acts on roots. The fact that $\Omega$ is formed of affine isomorphisms will considerably constrain these actions. 

\begin{lem}\label{Omegaactsonvertices}
The subgroup $\Omega\le\widehat{W_\mathrm{a}}$ acts on the set $\{0\}\cup\left\{\frac{\varpi_i^\vee}{n_i}\right\}_{i\in I}$ and on the subset $\{0\}\cup\{\varpi_j^\vee\}_{j\in J}$.
\end{lem}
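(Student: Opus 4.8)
The plan is to exploit the rigidity of affine automorphisms of a simplex: since $\mathcal{A}_0$ is a non-degenerate $r$-simplex and each element of $\Omega$ acts on it as an affine isomorphism, $\Omega$ must permute the vertices of $\mathcal{A}_0$. The two invariant sets will then be distinguished intrinsically, the first as the full vertex set and the second as the vertices lying in the lattice $P^\vee$.

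First I would record that, by Remark \ref{A0ispolytope}, $\mathcal{A}_0=\conv(\{0\}\cup\{\varpi_i^\vee/n_i\}_{i\in I})$, and that these $r+1$ points are affinely independent: the nonzero ones are positive scalar multiples of the $\varpi_i^\vee$, which form a basis of $V^*$. Hence $\mathcal{A}_0$ is an $r$-simplex whose vertex set is exactly $\{0\}\cup\{\varpi_i^\vee/n_i\}_{i\in I}$. Each $\phi\in\Omega$ is by definition an affine isomorphism of $V^*$ with $\phi\cdot\mathcal{A}_0=\mathcal{A}_0$; since affine isomorphisms carry extreme points to extreme points bijectively, $\phi$ permutes the extreme points of $\mathcal{A}_0$, i.e. its vertices. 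This establishes the first assertion.

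For the second assertion, the key observation is a lattice-theoretic description of the distinguished subset: a vertex $\varpi_i^\vee/n_i$ lies in $P^\vee=\bigoplus_{\alpha\in\Pi}\Z\varpi_\alpha^\vee$ if and only if $n_i=1$, because the $\varpi_\alpha^\vee$ form a $\Z$-basis of $P^\vee$ and $n_i\ge 1$. Since $n_j=1$ gives $\varpi_j^\vee/n_j=\varpi_j^\vee$, it follows that $\{0\}\cup\{\varpi_j^\vee\}_{j\in J}$ is precisely the set of vertices of $\mathcal{A}_0$ contained in $P^\vee$. Now every element of $\widehat{W_\mathrm{a}}=\mathrm{t}(P^\vee)\rtimes W$ maps $P^\vee$ into itself, as $W$ stabilizes the lattice $P^\vee$ and translation by an element of $P^\vee$ preserves $P^\vee$; in particular each $\phi\in\Omega$ does. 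Combining this with the first part, $\phi$ permutes the vertices of $\mathcal{A}_0$ while preserving membership in $P^\vee$, hence it permutes $\{0\}\cup\{\varpi_j^\vee\}_{j\in J}$.

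I do not anticipate a serious obstacle here, since the argument rests only on the rigidity of affine automorphisms of a simplex together with the $\widehat{W_\mathrm{a}}$-invariance of $P^\vee$. The single point requiring slight care is the intrinsic characterization of the minuscule vertices, namely that among all $\varpi_i^\vee/n_i$ exactly those with $n_i=1$ lie in $P^\vee$; this is where the definition $J=\{i\in I~;~n_i=1\}$ enters, and it is immediate once one recalls that the fundamental coweights are a basis of $P^\vee$.
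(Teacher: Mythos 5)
Your proof is correct, and for the second assertion it follows a genuinely different and cleaner route than the paper. The first part (affine automorphisms of the simplex $\mathcal{A}_0$ permute its extreme points) is the same as the paper's, with the added care of checking affine independence of $\{0\}\cup\{\varpi_i^\vee/n_i\}_{i\in I}$ — a point the paper instead delegates to Bourbaki's description of the vertices of $\mathcal{A}_0$. For the second part, the paper invokes the explicit description of $\Omega$ from Proposition \ref{descriptionOmega}, writes $\omega=\omega_k=\mathrm{t}_{\varpi_k^\vee}w_k$, expands $w_k(\varpi_j^\vee)=\varpi_j^\vee-\sum_{i'}\lambda^k_{i'}\alpha_{i'}^\vee$ with integer coefficients, and pairs against $\alpha_i$ to deduce $1/n_i=-\sum_{i'}\lambda^k_{i'}C_{i',i}\in\Z$, hence $n_i=1$. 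You bypass all of this by the intrinsic characterization: the vertices of $\mathcal{A}_0$ lying in $P^\vee$ are exactly $\{0\}\cup\{\varpi_j^\vee\}_{j\in J}$ (since the $\varpi_i^\vee$ form a $\Z$-basis of $P^\vee$ and $n_i\ge 1$), and every element of $\widehat{W_\mathrm{a}}=\mathrm{t}(P^\vee)\rtimes W$ preserves the lattice $P^\vee$, because $W$ stabilizes $P^\vee=Q^\wedge$ and translations by $P^\vee$ obviously do. Your argument is shorter, needs only the semidirect product structure of $\widehat{W_\mathrm{a}}$ rather than the precise form of the elements $\omega_k$, and makes transparent \emph{why} the minuscule vertices form an invariant subset (they are the lattice points among the vertices). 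What the paper's computational route buys is that it rehearses the bookkeeping with $\omega_k=\mathrm{t}_{\varpi_k^\vee}w_k$ — tracking how the Weyl part moves coweights modulo $Q^\vee$ — which is the engine of the subsequent Lemmas \ref{actionwimonroots} and \ref{actionwionroots}; but as a proof of this particular lemma, your version is self-contained and preferable.
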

\begin{proof}
Let $\omega\in\Omega$. Since ${\mathcal{A}_0}$ is a convex polytope with vertices $\{0\}\cup\{{\varpi_i^\vee}/{n_i}\}_{i\in I}$ (see \cite[VI, \S 2.2]{bourbaki456}) and since $\omega$ is affine, it sends any vertex of ${\mathcal{A}_0}$ to another one. Hence, $\Omega$ indeed acts on the first set.

Now, we have to see why a minuscule coweight $\varpi_j^\vee$ is sent by $\omega\in\Omega$ to $0$ or a minuscule coweight. We just have seen that, if it is non-zero, then there is some $i$ such that $\omega(\varpi_j^\vee)={\varpi_i^\vee}/{n_i}$. Using the description given in the Proposition \ref{descriptionOmega} for $\Omega$, we can write $\omega=\omega_k=\mathrm{t}_{\varpi_k ^\vee}w_k$ for some $k\in J$. Note that if $i\in\{j,k\}$, then $n_i=1$ and thus $\omega(\varpi_j^\vee)$ is indeed a minuscule coweight, so we may assume that $i\ne j,k$. Let $i'\in I$ and consider the simple reflection $s_{i'} : \lambda \mapsto \lambda-\left<\lambda,\alpha_{i'}\right>\alpha_{i'}^\vee$. We immediately see that $s_{i'}(\varpi_j^\vee)=\varpi_j^\vee-\delta_{i',j}\alpha_j^\vee$ and by induction, we conclude that there are integers $\lambda^k_{i'}\in\Z$ such that
\[w_k(\varpi_j^\vee)=\varpi_j^\vee-\sum_{i'=1}^r\lambda^k_{i'}\alpha_{i'}^\vee.\]
Hence, we compute
\[\frac{\varpi_i^\vee}{n_i}=\omega_k(\varpi_j^\vee)\stackrel{\tiny{\text{def}}}=w_k(\varpi_j^\vee)+\varpi_k^\vee=\varpi_j^\vee+\varpi_k^\vee-\sum_{i'=1}^r\lambda^k_{i'}\alpha_{i'}^\vee.\]
Taking the pairing of this equation against $\alpha_i$ yields
\[\frac{1}{n_i}=-\sum_{i'}\lambda^k_{i'}\left<\alpha_{i'}^\vee,\alpha_i\right>=-\sum_{i'}\lambda^k_{i'}C_{i',i},\]
where $C=(C_{i',i})\in\mathcal{M}_r(\Z)$ is the Cartan matrix. Thus, the sum above is an integer, which implies that $n_i=1$.
\end{proof}

Now, we outline the way an element $w_i\in W$ acts on roots.
\begin{lem}[{\cite[Lemma 1]{komrakov-premet}}]\label{actionwimonroots}
Let $i\in J$ and consider the corresponding element $\omega_i=\mathrm{t}_{\varpi_i^\vee}w_i\in\Omega$. Then the element $w_i^{-1}\in W$ takes $\Pi\setminus\{\alpha_i\}$ into $\Pi$ and $\alpha_i$ to the \emph{lowest root} $-\alpha_0$, that is
\[\left\{\begin{array}{c}
w_i^{-1}(\Pi\setminus\{\alpha_i\})\subset\Pi \\ w_i^{-1}(\alpha_i)=-\alpha_0\end{array}\right.\]
Moreover, if $j\ne i$, then $n_{w_i^{-1}\alpha_j}=n_j$; in other words, the numbers of occurrences of $\alpha_j$ and of $w_i^{-1}(\alpha_j)$ in $\alpha_0$ are the same. In fact, we have
\[\alpha_j=w_i(\alpha_k)~\Leftrightarrow~\frac{\varpi_j^\vee}{n_j}=\omega_i\left(\frac{\varpi_k^\vee}{n_k}\right).\]

Finally, if $j\in I$ is such that $\omega_i(\varpi_j^\vee)=0$, then $w_i^{-1}(\alpha_0)=-\alpha_j$.
\end{lem}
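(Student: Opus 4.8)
The plan is to treat $\omega_i=\mathrm{t}_{\varpi_i^\vee}w_i$ as an affine automorphism of the simplex $\mathcal{A}_0$ and to read off the action of $w_i$ on roots from the way $\omega_i$ permutes the vertices and walls. First I would record a dictionary between the geometry of $\mathcal{A}_0$ and the affine simple reflections: by Remark \ref{A0ispolytope}, $\mathcal{A}_0$ is the $r$-simplex with vertices $v_0:=0$ and $v_k:=\varpi_k^\vee/n_k$ ($k\in I$), and a one-line pairing computation shows that $v_0$ is the unique vertex off the wall $H_{\alpha_0,1}$ of $s_0$, while $v_k$ is the unique vertex off the wall $H_{\alpha_k,0}$ of $s_k$. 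Since $\omega_i\in\Omega$ permutes the vertices (Lemma \ref{Omegaactsonvertices}), it permutes the walls and therefore conjugates the $s_t$ among themselves: writing $\omega_i(v_k)=v_{\pi(k)}$, one has $\omega_i s_k\omega_i^{-1}=s_{\pi(k)}$. The identity $\omega_i(0)=w_i(0)+\varpi_i^\vee=\varpi_i^\vee=v_i$ then gives $\pi(0)=i$, i.e. $\omega_i s_0\omega_i^{-1}=s_i$.

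Next I would convert these conjugation relations into statements about roots by tracking both linear parts and affine levels, via the rule $g\,s_{\beta,k}\,g^{-1}=s_{\bar g\beta,\,k+\langle v,\bar g\beta\rangle}$ valid for $g=\mathrm{t}_v\bar g$. Applied to $\omega_i s_0\omega_i^{-1}=s_i=s_{\alpha_i,0}$, equality of reflecting hyperplanes forces $w_i\alpha_0=\pm\alpha_i$, and since the hyperplane of $s_i$ passes through the origin the level must vanish, giving $\langle\varpi_i^\vee,w_i\alpha_0\rangle=-1$ and hence $w_i\alpha_0=-\alpha_i$, that is $w_i^{-1}(\alpha_i)=-\alpha_0$. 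For the assertion $w_i^{-1}(\Pi\setminus\{\alpha_i\})\subset\Pi$ I would not push the sign through the level computation — a reflection $s_{\alpha,0}$ is blind to the sign of $\alpha$ — but instead use the factorization $w_i^{-1}=w_0w_0^i$ of Proposition-Definition \ref{descriptionOmega}: for $j\ne i$ the longest element $w_0^i$ of $W_i$ sends $\alpha_j$ to $-\alpha_{j'}$ with $j'\ne i$, and then $w_0$ carries $-\alpha_{j'}$ back into $\Pi$, so $w_i^{-1}(\alpha_j)\in\Pi$.

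Finally I would handle the equivalence and the marks together. The same dictionary shows, for $j\ne i$, that $w_i\alpha_k=\alpha_j$ is equivalent to $\omega_i s_k\omega_i^{-1}=s_j$ — here the level is $\langle\varpi_i^\vee,\alpha_j\rangle=0$, so no sign ambiguity arises — which in turn means $\omega_i(v_k)=v_j$, i.e. $\varpi_j^\vee/n_j=\omega_i(\varpi_k^\vee/n_k)$; this is the stated equivalence. Equality of marks then follows from a primitivity argument: applying the linear part to $\omega_i(\varpi_k^\vee/n_k)=\varpi_j^\vee/n_j$ yields $\tfrac{n_k}{n_j}\varpi_j^\vee=w_i\varpi_k^\vee+n_k\varpi_i^\vee\in P^\vee$, and as $\varpi_j^\vee$ is a basis vector of $P^\vee$ this forces $n_j\mid n_k$; the symmetric computation with $\omega_i^{-1}$ gives $n_k\mid n_j$, so $n_{w_i^{-1}\alpha_j}=n_j$. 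For the last claim, since $w_0^i$ is an involution, $w_i^{-1}(\alpha_i)=-\alpha_0$ is equivalent to $w_0^i(\alpha_0)=\alpha_i$, whence $w_i^{-1}(\alpha_0)=w_0(\alpha_i)=-\alpha_{i^*}$ where $\alpha_{i^*}:=-w_0\alpha_i\in\Pi$; and $\omega_i(\varpi_j^\vee)=0$ reads $\varpi_j^\vee=-w_i^{-1}\varpi_i^\vee=-w_0\varpi_i^\vee=\varpi_{i^*}^\vee$ (using the preceding remark), so $j=i^*$ and $w_i^{-1}(\alpha_0)=-\alpha_j$. The hard part will be the sign bookkeeping in the affine-level computation of the second step; this is exactly the point where the purely geometric permutation argument is insufficient and must be supplemented by the factorization $w_i=w_0^iw_0$.
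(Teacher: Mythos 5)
Your proposal is correct in outline and takes a genuinely different route from the paper's. Where you transport the problem to the affine side --- reading the action of $w_i$ on roots off the conjugation $\omega_i s_t \omega_i^{-1}=s_{\pi(t)}$ of the affine simple reflections, which is determined by how $\omega_i$ permutes the vertices and walls of the simplex $\mathcal{A}_0$ --- the paper stays entirely linear: for $j\ne i$ it takes $k$ with $\omega_i(\varpi_k^\vee/n_k)=\varpi_j^\vee/n_j$ (Lemma \ref{Omegaactsonvertices}) and computes the pairings $\langle\varpi_\ell^\vee,w_i^{-1}\alpha_j\rangle$ for all $\ell$, finding $n_k/n_j$ for $\ell=k$ and $0$ otherwise, whence $w_i^{-1}\alpha_j=(n_k/n_j)\alpha_k$; integrality of root coordinates then gives the inclusion $w_i^{-1}(\Pi\setminus\{\alpha_i\})\subset\Pi$, the equivalence, and the equality of marks in one stroke, and similar pairing computations settle $w_i\alpha_0=-\alpha_i$ (sign fixed by $w_i\ne 1$) and the last claim. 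Several of your alternatives are cleaner: the factorization $w_i^{-1}=w_0w_0^i$ together with $w_0^i(\Pi\setminus\{\alpha_i\})=-(\Pi\setminus\{\alpha_i\})$ and $-w_0(\Pi)=\Pi$ gives the first inclusion essentially for free; the affine-level bookkeeping pins down $w_i\alpha_0=-\alpha_i$ without case analysis; your primitivity argument in $P^\vee$ for $n_{w_i^{-1}\alpha_j}=n_j$ is a legitimate substitute for the paper's root-integrality argument; and the treatment of the last claim via $\alpha_{i^*}=-w_0\alpha_i$ is tidy (it correctly leans on $w_0\alpha_0=-\alpha_0$ and on the identity $w_0^i\varpi_i^\vee=\varpi_i^\vee$ from the Remark following Proposition-Definition \ref{descriptionOmega}).

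One justification, however, is stated backwards. In the equivalence step you claim that since the level $\langle\varpi_i^\vee,\alpha_j\rangle=0$, ``no sign ambiguity arises.'' It is exactly the opposite: a \emph{nonzero} level pins down the sign (as in your $s_0$ computation, where $1+\langle\varpi_i^\vee,w_i\alpha_0\rangle=0$ forces $w_i\alpha_0=-\alpha_i$), whereas a zero level means the reflecting hyperplane passes through the origin and $H_{\alpha_j,0}=H_{-\alpha_j,0}$, so the equality $\omega_i s_k\omega_i^{-1}=s_j$ only yields $w_i\alpha_k=\pm\alpha_j$ --- this is precisely the blindness of $s_{\alpha,0}$ to the sign of $\alpha$ that you yourself invoked to justify routing the first statement through $w_0w_0^i$. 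The gap closes immediately with what you already have: for $j\ne i$ your first statement gives $w_i^{-1}\alpha_j\in\Pi$, so $w_i\alpha_k=-\alpha_j$ would put $\alpha_k=-w_i^{-1}\alpha_j$ in $-\Pi$, which is absurd; hence the sign is $+$, the chain $w_i\alpha_k=\alpha_j\Leftrightarrow\omega_i s_k\omega_i^{-1}=s_j\Leftrightarrow\omega_i(\varpi_k^\vee/n_k)=\varpi_j^\vee/n_j$ holds as claimed, and with it the marks equality. With that one repair, the proof is complete.
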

\begin{proof}
First, we prove that $w_i^{-1}(\Pi\setminus\{\alpha_i\})\subset \Pi$. Consider $\alpha_j\in\Pi$ with $j\ne i$. If we have $\varpi_i^\vee=\omega_i(0)=\frac{\varpi_j^\vee}{n_j}$, then $n_j=1$ and $i=j$, which is excluded. Thus, by the Lemma \ref{Omegaactsonvertices}, there is some $k\in I$ such that $\omega_i\left(\frac{\varpi_k^\vee}{n_k}\right)=\frac{\varpi_j^\vee}{n_j}$. We compute
\begin{equation}\label{this=nk/nj}\tag{$\ast$}
\left<\varpi_k^\vee,w_i^{-1}\alpha_j\right>=n_k\left<w_i\left(\frac{\varpi_k^\vee}{n_k}\right),\alpha_j\right>=n_k\left<\frac{\varpi_j^\vee}{n_j}-\varpi_i^\vee,\alpha_j\right>=\frac{n_k}{n_j}.
\end{equation}

Now, let $\ell\ne k$. If $\omega_i\left(\frac{\varpi_\ell^\vee}{n_\ell}\right)=0$, then we have 
\[\left<\varpi_\ell^\vee,w_i^{-1}\alpha_j\right>=-n_\ell\left<\varpi_i^\vee,\alpha_j\right>=0,\]
otherwise, there is some $\ell'\ne j$ such that $\omega_i\left(\frac{\varpi_\ell^\vee}{n_\ell}\right)=\frac{\varpi_{\ell'}^\vee}{n_{\ell'}}$ and we compute similarly
\[\left<\varpi_\ell^\vee,w_i^{-1}\alpha_j\right>=n_\ell\left<w_i\left(\frac{\varpi_\ell^\vee}{n_\ell}\right),\alpha_j\right>=0.\]
In any case, we have
\begin{equation}\label{other=0}\tag{$\ast\ast$}
\forall \ell\ne k,~\left<\varpi_\ell^\vee,w_i^{-1}\alpha_j\right>=0
\end{equation}

Hence, equations (\ref{this=nk/nj}) ans (\ref{other=0}) imply that $\Phi\ni w_i^{-1}\alpha_j=\frac{n_k}{n_j}\alpha_k$, thus $n_j=n_k$ and $w_i^{-1}\alpha_j=\alpha_k\in\Pi$. This proves the first and third statements at once.

We have to see why $w_i\alpha_0=-\alpha_i$. First, since $n_i=1$, the vector $\varpi_i^\vee$ is a vertex of ${\mathcal{A}_0}$, hence the vector $w_i^{-1}\varpi_i^\vee$ is a vertex of $-{\mathcal{A}_0}$, that is, $w_i^{-1}\varpi_i^\vee$ belongs to the set $\{0\}\cup\{-\varpi_j^\vee/n_j\}$. Since $w_i^{-1}\varpi_i^\vee\ne0$, there is some $j$ such that $w_i^{-1}\varpi_i^\vee=-\varpi_j^\vee/n_j$, so we have
\begin{equation}\label{scal=-1}\tag{$\dagger$}
\left<\varpi_i^\vee,w_i\alpha_0\right>=\left<w_i^{-1}\varpi_i^\vee,\alpha_0\right>=-1.
\end{equation}

Now, let $j\ne i$. Here again, we have $\omega_i(0)=\varpi_i^\vee\ne\frac{\varpi_j^\vee}{n_j}$, so there exists $k$ such that $\omega_i\left(\frac{\varpi_k^\vee}{n_k}\right)=\frac{\varpi_j^\vee}{n_j}$. We compute, using the equation (\ref{scal=-1}),
\begin{equation}\label{otherr=0}\tag{$\ddagger$}
\left<\frac{\varpi_j^\vee}{n_j},w_i\alpha_0\right>=\left<w_i\left(\frac{\varpi_k^\vee}{n_k}\right)+\varpi_i^\vee,w_i\alpha_0\right>=\left<\frac{\varpi_k^\vee}{n_k},\alpha_0\right>-1=0.
\end{equation}

Now, we have $w_i\alpha_0\in\Phi$ and the equation (\ref{otherr=0}) ensures that for $j\ne i$, we have $\left<\varpi_j^\vee,w_i\alpha_0\right>=0$, so there is $\mu\in\{\pm1\}$ such that $w_i\alpha_0=\mu\alpha_i$. But if $\mu=1$, then $w_i^{-1}\alpha_i=\alpha_0\in\Phi^+$ and using the first statement, we obtain $w_i^{-1}(\Pi)\subset\Phi^+$, thus $w_i=1$, a contradiction.

To prove the last statement, we compute 
\[\left<\varpi_j^\vee,w_i^{-1}\alpha_0\right>=\left<w_i\varpi_j^\vee,\alpha_0\right>=-\left<\varpi_i^\vee,\alpha_0\right>=-1.\]
If $k\ne j$, then there exists $\ell$ such that $\omega_i\left(\frac{\varpi_k^\vee}{n_k}\right)=\frac{\varpi_\ell^\vee}{n_\ell}$ and we have
\[\left<\frac{\varpi_k^\vee}{n_k},w_i^{-1}\alpha_0\right>=\left<\frac{\varpi_\ell^\vee}{n_\ell}-\varpi_i^\vee,\alpha_0\right>=0.\]
These two equations together show that $w_i^{-1}\alpha_0=-\alpha_j$, as required.
\end{proof}

\begin{lem}\label{actionwionroots}
For $\omega_i=\mathrm{t}_{\varpi_i^\vee}w_i\in\Omega$ and a simple root $\alpha_k\in\Pi$, the root $w_i\alpha_k$ is simple except in the case where $\omega_i(\varpi_k^\vee)=0$, in which case it is the lowest root.
\end{lem}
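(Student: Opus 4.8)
The plan is to transport the control of $w_i^{-1}$ on simple roots obtained in Lemma~\ref{actionwimonroots} through the vertex set of $\mathcal{A}_0$, on which $\Omega$ acts by Lemma~\ref{Omegaactsonvertices}. I fix $i\in J$ and a simple root $\alpha_k$, and look at the vertex $\varpi_k^\vee/n_k$ of the fundamental alcove. Since $\omega_i$ is affine and preserves the vertex set, its image $\omega_i(\varpi_k^\vee/n_k)$ is again a vertex, hence is either $0$ or of the form $\varpi_j^\vee/n_j$ for some $j\in I$; the whole argument splits along this alternative.

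In the first case $\omega_i(\varpi_k^\vee/n_k)=\varpi_j^\vee/n_j\neq 0$, I would simply invoke the equivalence
\[\alpha_j=w_i(\alpha_k)\iff \frac{\varpi_j^\vee}{n_j}=\omega_i\!\left(\frac{\varpi_k^\vee}{n_k}\right)\]
proved in Lemma~\ref{actionwimonroots}, which immediately gives $w_i(\alpha_k)=\alpha_j\in\Pi$, a simple root. It then remains to treat the case $\omega_i(\varpi_k^\vee/n_k)=0$, where I expect the lowest root. Here I would first reconcile this vertex condition with the condition $\omega_i(\varpi_k^\vee)=0$ appearing in the statement: as $\omega_i$ is a bijection of the finite vertex set with $\omega_i(0)=\varpi_i^\vee\neq 0$, exactly one nonzero vertex is collapsed to $0$, namely $\omega_i^{-1}(0)$; writing the nontrivial element $\omega_i^{-1}\in\Omega$ as $\omega_m$ with $m\in J$ through Proposition-Definition~\ref{descriptionOmega} gives $\omega_i^{-1}(0)=\omega_m(0)=\varpi_m^\vee$, which is \emph{minuscule}. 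Consequently $\omega_i(\varpi_k^\vee/n_k)=0$ forces $\varpi_k^\vee/n_k=\varpi_m^\vee$, hence $k=m\in J$ and $n_k=1$, so the two conditions genuinely coincide. Granting this, the last assertion of Lemma~\ref{actionwimonroots} applied with $j=k$ yields $w_i^{-1}(\alpha_0)=-\alpha_k$, and applying $w_i$ gives $w_i(\alpha_k)=-\alpha_0$, the lowest root.

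Every individual computation is immediate once this dictionary between vertices and (co)roots is in place, so I anticipate no serious analytic difficulty. The only genuinely delicate step is the bookkeeping in the degenerate case: matching the condition $\omega_i(\varpi_k^\vee)=0$ of the statement with the linear condition $\omega_i(\varpi_k^\vee/n_k)=0$ that the vertex argument naturally produces, and in particular verifying that the unique vertex sent to the origin is automatically minuscule. This is precisely where the group structure of $\Omega$ (through Proposition-Definition~\ref{descriptionOmega}) and the identity $\omega_i(0)=\varpi_i^\vee$ become indispensable.
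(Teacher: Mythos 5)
Your proposal is correct and is essentially the paper's own argument: both proofs hinge on identifying $\omega_i^{-1}$ as an element $\omega_m$ of $\Omega$ via Proposition-Definition \ref{descriptionOmega}, so that the unique point sent to $0$ is the minuscule coweight $\varpi_m^\vee=\omega_i^{-1}(0)$, and then transporting Lemma \ref{actionwimonroots} through the vertex action of Lemma \ref{Omegaactsonvertices}. The only cosmetic difference is that the paper applies the first two statements of Lemma \ref{actionwimonroots} to $\omega_i^{-1}=\omega_{k_0}$ (whose Weyl part is $w_i^{-1}$), whereas you apply the displayed equivalence and the final assertion of that lemma to $\omega_i$ itself; your careful matching of the conditions $\omega_i(\varpi_k^\vee)=0$ and $\omega_i\bigl(\varpi_k^\vee/n_k\bigr)=0$ is precisely what the paper's terse ``the conclusion follows'' compresses.
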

\begin{proof}
We compute
\[\omega_i^{-1}=w_i^{-1}\mathrm{t}_{-\varpi_i^\vee}=w_i^{-1}\mathrm{t}_{-\varpi_i^\vee}w_iw_i^{-1}=\mathrm{t}_{-w_i^{-1}(\varpi_i^\vee)}w_i^{-1}=\mathrm{t}_{\omega_i^{-1}(0)}w_i^{-1}.\]
Since $\Omega$ acts on $\{0\}\cup\{\varpi_\ell^\vee/n_\ell\}$, there exists $k_0\in J$ such that $\varpi_{k_0}^\vee=\omega_i^{-1}(0)$ and the conclusion follows from the first two statements of the Lemma \ref{actionwimonroots}.
\end{proof}

Recall that the \emph{extended Dynkin diagram} of the root system $\Phi$ is its Dynkin diagram to which we add a new vertex corresponding to $-\alpha_0$, which is linked to the other vertices in the same way as for the classical Dynkin diagram. 

The Lemmae \ref{actionwimonroots} and \ref{actionwionroots} show that the Weyl part of any element of $\Omega$ permutes the vertices of the extended diagram; the corresponding permutations are made explicit in the Table \ref{extendeddynkindiagrams}. Moreover, since the pairing is $W$-invariant, it also preserves the edges of the diagram. Hence we obtain the following result:

\begin{cor}\label{pi(Omega)=Aut(Dyn0)}
The group $\Omega$ acts on the extended Dynkin diagram $\widehat{\mathcal{D}}$, giving rise to a normal inclusion $\Omega\unlhd{\rm Aut}(\widehat{\mathcal{D}})$. Moreover, the factor group ${\rm Aut}(\widehat{\mathcal{D}})/\Omega$ is isomorphic to the automorphism group ${\rm Aut}(\mathcal{D})\le{\rm Aut}(\widehat{\mathcal{D}})$ of the finite Dynkin diagram $\mathcal{D}$ of $\Phi$ (see Table \ref{extendeddynkindiagrams}).

In other words, there is a semi-direct product decomposition
\[{\rm Aut}(\widehat{\mathcal{D}})\simeq\Omega\rtimes{\rm Aut}(\mathcal{D}).\]
\end{cor}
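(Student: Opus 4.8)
The plan is to realise both $\Omega$ and $\mathrm{Aut}(\mathcal{D})$ as explicit subgroups of $\mathrm{Aut}(\widehat{\mathcal{D}})$ and to pin down the latter via a stabilizer inside $\mathrm{Aff}(V^*)$. Write the vertices of $\widehat{\mathcal{D}}$ as $\{0,1,\dots,r\}$, where the \emph{affine node} $0$ corresponds to $-\alpha_0$ and $i$ to $\alpha_i$. The Lemmas \ref{actionwimonroots} and \ref{actionwionroots} already furnish a homomorphism $\pi:\Omega\to\mathrm{Aut}(\widehat{\mathcal{D}})$, and since $w_i(-\alpha_0)=-w_i(\alpha_0)=\alpha_i$, the automorphism $\pi(\omega_i)$ sends the affine node $0$ to $i\ne 0$; hence $\pi$ is injective. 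On the other hand, deleting the affine node from $\widehat{\mathcal{D}}$ recovers $\mathcal{D}$, so the stabilizer of $0$ in $\mathrm{Aut}(\widehat{\mathcal{D}})$ is exactly $\mathrm{Aut}(\mathcal{D})$; and since the only element of $\Omega$ fixing $0$ is the identity (again because $\pi(\omega_i)(0)=i$), we get $\Omega\cap\mathrm{Aut}(\mathcal{D})=\{1\}$.

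Next I would compute the $\mathrm{Aut}(\widehat{\mathcal{D}})$-orbit of the affine node. On one hand, $\{\pi(\omega_i)(0)\}_{i\in J}=J$ shows that $\Omega$ carries $0$ onto every vertex of $\{0\}\cup J$, i.e. onto the vertices of mark $1$; moreover $\Omega$ acts there simply transitively, since this orbit has $|J|+1=|\Omega|$ elements and the stabilizer of $0$ is trivial. On the other hand, any automorphism of $\widehat{\mathcal{D}}$ preserves the marks $n_i$ — they form the unique positive generator of the radical of the affine Cartan matrix attached to $\widehat{\mathcal{D}}$ — and therefore maps the mark-$1$ node $0$ to another mark-$1$ node, i.e. into $\{0\}\cup J$. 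Hence the full orbit of $0$ equals $\{0\}\cup J$ and $\Omega$ is already transitive on it. Consequently, given any $\sigma\in\mathrm{Aut}(\widehat{\mathcal{D}})$, choosing $\omega\in\Omega$ with $\pi(\omega)(0)=\sigma(0)$ makes $\pi(\omega)^{-1}\sigma$ fix $0$, so $\pi(\omega)^{-1}\sigma\in\mathrm{Aut}(\mathcal{D})$ and $\mathrm{Aut}(\widehat{\mathcal{D}})=\pi(\Omega)\cdot\mathrm{Aut}(\mathcal{D})$.

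It then remains to prove that $\pi(\Omega)$ is normal, which upgrades this factorisation to a semidirect product. As $\pi(\Omega)$ is abelian it normalises itself, so it suffices to see that $\mathrm{Aut}(\mathcal{D})$ normalises it, and for this I would realise $\mathrm{Aut}(\mathcal{D})$ inside $\mathrm{Aff}(V^*)$. A diagram automorphism $\delta$ acts linearly on $V^*$ by permuting fundamental coweights ($\varpi_k^\vee\mapsto\varpi_{\delta(k)}^\vee$), hence lies in $\mathrm{Aut}(\Phi)$ and preserves both $P^\vee$ and $W$; since it fixes $\alpha_0$ it preserves the marks, permutes the vertices $\varpi_k^\vee/n_k$ of $\mathcal{A}_0$ while fixing $0$, and so stabilises $\mathcal{A}_0$. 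In particular $\delta$ normalises $\widehat{W_\mathrm{a}}=\mathrm{t}(P^\vee)\rtimes W$. Because $\Omega=\widehat{W_\mathrm{a}}\cap\mathrm{Stab}_{\mathrm{Aff}(V^*)}(\mathcal{A}_0)$ and $\delta\mathcal{A}_0=\mathcal{A}_0$, conjugation by $\delta$ sends $\Omega$ onto $\widehat{W_\mathrm{a}}\cap\mathrm{Stab}_{\mathrm{Aff}(V^*)}(\mathcal{A}_0)=\Omega$; applying the homomorphic wall-permutation map $\pi$ then yields $\pi(\delta)\pi(\Omega)\pi(\delta)^{-1}=\pi(\Omega)$.

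Putting these together, $\pi(\Omega)\unlhd\mathrm{Aut}(\widehat{\mathcal{D}})$, together with $\mathrm{Aut}(\widehat{\mathcal{D}})=\pi(\Omega)\,\mathrm{Aut}(\mathcal{D})$ and $\pi(\Omega)\cap\mathrm{Aut}(\mathcal{D})=\{1\}$, gives $\mathrm{Aut}(\widehat{\mathcal{D}})\simeq\Omega\rtimes\mathrm{Aut}(\mathcal{D})$ and $\mathrm{Aut}(\widehat{\mathcal{D}})/\Omega\simeq\mathrm{Aut}(\mathcal{D})$, as claimed. I expect the genuine obstacle to be the orbit computation of the second paragraph: everything else is formal once one knows that the node $0$ cannot leave the set of mark-$1$ vertices, and this is exactly the invariance of the marks under diagram automorphisms that has to be invoked (or, failing a uniform argument, read off from Table \ref{extendeddynkindiagrams} type by type). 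The normality step is the other delicate point, since it relies essentially on the affine realisation of $\mathrm{Aut}(\mathcal{D})$ rather than on any purely combinatorial manipulation of $\widehat{\mathcal{D}}$.
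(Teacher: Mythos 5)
Your proof is correct, and it takes a genuinely different --- and substantially more complete --- route than the paper. The paper offers no argument beyond the paragraph preceding the corollary: Lemmas \ref{actionwimonroots} and \ref{actionwionroots}, together with the $W$-invariance of the pairing, show that each element of $\Omega$ induces an automorphism of $\widehat{\mathcal{D}}$, and the normality, the identification of the quotient with ${\rm Aut}(\mathcal{D})$, and the splitting are then left to be read off, type by type, from the explicit permutations in Table \ref{extendeddynkindiagrams}. You supply uniform, case-free arguments for exactly the two points the paper delegates to inspection. First, the factorization ${\rm Aut}(\widehat{\mathcal{D}})=\pi(\Omega)\cdot\mathrm{Stab}(0)$: your observation that any automorphism of $\widehat{\mathcal{D}}$ preserves the marks --- since $(n_0,n_1,\dotsc,n_r)=(1,n_1,\dotsc,n_r)$ spans the radical of the affine Cartan matrix --- pins the orbit of the affine node down to the mark-one nodes, on which $\Omega$ acts simply transitively by Proposition \ref{descriptionOmega} and Lemma \ref{actionwimonroots}; this is the key idea that makes the argument independent of the table. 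Second, normality, obtained from the linear realization of ${\rm Aut}(\mathcal{D})$ on $V^*$, which normalizes $\widehat{W_\mathrm{a}}$ and stabilizes $\mathcal{A}_0$, so that conjugation preserves $\Omega=\widehat{W_\mathrm{a}}\cap\mathrm{Stab}_{\mathrm{Aff}(V^*)}(\mathcal{A}_0)$. Two small points deserve tightening, though neither is a gap in substance. The symbol $\pi(\delta)$ is an abuse, since $\delta\notin\widehat{W_\mathrm{a}}$: what your argument actually needs is that the facet-permutation map $\mathrm{Stab}_{\mathrm{Aff}(V^*)}(\mathcal{A}_0)\to\mathfrak{S}(\{0,1,\dotsc,r\})$ is a group homomorphism whose restriction to $\Omega$ agrees with the root-theoretic description of the paper; this does hold, because $\omega_i$ maps the wall $\mathcal{A}_0\cap H_{\alpha_j,0}$ into $H_{w_i\alpha_j,\left<\varpi_i^\vee,w_i\alpha_j\right>}$, which by Lemmas \ref{actionwimonroots} and \ref{actionwionroots} is again a wall hyperplane of $\mathcal{A}_0$, but it should be said. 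Likewise, the fact about the radical of the affine Cartan matrix is external to the paper; it is standard and follows in one line from $-\alpha_0+\sum_i n_i\alpha_i=0$ together with the invertibility of the finite Cartan matrix, but it should be proved or cited rather than merely asserted. With those two remarks spelled out, your argument is a self-contained, uniform proof of a statement the paper essentially verifies by consulting its table.
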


\emph{Proof of Proposition \ref{funddomforOmega}.} By definition of a fundamental domain, we have to prove that the set
\[F:=\{\lambda\in\mathcal{A}_0~;~\forall i\in J,~\left<\lambda,\alpha_0+\alpha_i\right>\le1\}\]
satisfies the following conditions:
\begin{enumerate}[label=\alph*)]
\item the set $F$ is closed in $\mathcal{A}_0$ and connected,
\item for $1\ne \omega\in\Omega$, the set $F\cap \omega F$ has empty interior in $\mathcal{A}_0$,
\item the union of the $\Omega$-translates of $F$ cover $\mathcal{A}_0$, that is $\mathcal{A}_0=\bigcup_{\omega\in\Omega}\omega F.$
\end{enumerate}

To prove $\mathrm{a})$, we just have to notice that $F$ is an intersection of closed half-spaces, so it is a convex polytope.

Let $\omega\in\Omega\setminus\{1\}$. For $\lambda\in F$, writing $\omega=\omega_i=\mathrm{t}_{\varpi_i^\vee}w_i$, we calculate
\[\left<\omega_i(\lambda),\alpha_0+\alpha_i\right>=\left<w_i\lambda+\varpi_i^\vee,\alpha_0+\alpha_i\right>=2+\left<\lambda,w_i^{-1}\alpha_0+w_i^{-1}\alpha_i\right>,\]
but, by the Lemma \ref{actionwimonroots}, we have $w_i^{-1}\alpha_i=-\alpha_0$ and there is some $k\in J$ such that $w_i^{-1}\alpha_0=-\alpha_k\in\Pi$. Since $\lambda\in F$, we have
\[\left<\omega_i(\lambda),\alpha_0+\alpha_i\right>=2+\left<\lambda,w_i^{-1}\alpha_0+w_i^{-1}\alpha_i\right>=2-\underbrace{\left<\lambda,\alpha_0+\alpha_k\right>}_{\le 1}\ge1.\]
This proves that, if $\lambda \in F$, then $\left<\omega_i(\lambda),\alpha_0+\alpha_i\right>\ge1$. Thus, if $\lambda\in F$ and $\omega_i(\lambda)\in F$, then $\left<\omega_i(\lambda),\alpha_0+\alpha_i\right>=1$. Therefore, we have
\[F\cap\omega_i^{-1}F\subseteq \mathcal{A}_0\cap \{\lambda\in V^*~;~\left<\omega_i(\lambda),\alpha_0+\alpha_i\right>=1\},\]
and the later subset has empty interior in $\mathcal{A}_0$, concluding the proof of $\mathrm{b})$.

To establish $\mathrm{c})$, let $\lambda\in\mathcal{A}_0$. We have to find some $\omega\in\Omega$ such that $\lambda\in\omega F$. Choose $i\in J$ such that
\[\left<\lambda,\alpha_i\right>=\max_{j\in J}\left<\lambda,\alpha_j\right>.\]
If $\left<\lambda,\alpha_0+\alpha_i\right>\le1$, then $\lambda\in F$ and $\omega=1$ is the desired element. Otherwise, pick $j$ such that $\omega_i(\varpi_j^\vee)\ne 0$. We have
\[\left<\omega_i^{-1}(\lambda),\alpha_0+\alpha_j\right>=\left<w_i^{-1}(\lambda-\varpi_i^\vee),\alpha_0+\alpha_j\right>=\left<\lambda-\varpi_i^\vee,w_i\alpha_0+w_i\alpha_j\right>=\left<\lambda-\varpi_i^\vee,\alpha_k-\alpha_i\right>,\]
where $w_i\alpha_j=\alpha_k\in\Pi$ for some $k$; which is ensured to exist by the Lemma \ref{actionwionroots}. If $i=k$, then this is zero. Otherwise, we get
\[\left<\omega_i^{-1}(\lambda),\alpha_0+\alpha_j\right>=\left<\lambda-\varpi_i^\vee,\alpha_k-\alpha_i\right>=1+\underbrace{\left<\lambda,\alpha_k-\alpha_i\right>}_{\le0}\le1.\]
Hence, for every $j$ such that $\omega_i(\varpi_j^\vee)\ne0$, we have $\left<\omega_i^{-1}(\lambda),\alpha_0+\alpha_j\right>\le1$. Now, let $j$ such that $\omega_i(\varpi_j^\vee)=0$. Then, by the Lemma \ref{actionwionroots}, we have $w_i\alpha_j=-\alpha_0$ and thus
\[\left<\omega_i^{-1}(\lambda),\alpha_0+\alpha_j\right>=\left<\lambda-\varpi_i^\vee,w_i\alpha_0+w_i\alpha_j\right>=\left<\varpi_i^\vee-\lambda,\alpha_0+\alpha_i\right>=2-\underbrace{\left<\lambda,\alpha_0+\alpha_i\right>}_{>1}<1.\]
Therefore, for every $j\in J$ we have $\left<\omega_i^{-1}(\lambda),\alpha_0+\alpha_j\right>\le1$ and thus $\omega_i^{-1}(\lambda)\in F$.

Finally, take $\lambda\in F_{P^\vee}$. We have already seen that if $\omega_i\in\Omega$ is such that $\omega_i^{-1}(\lambda)\in F_{P^\vee}$, then $\left<\lambda,\alpha_i+\alpha_0\right>=1$. Conversely, assume that the latter equality holds; we aim to prove that $\omega_i^{-1}(\lambda)\in F_{P^\vee}$. For this, take $i\ne j\in J$ and compute
\begin{align*}
\left<\omega_i^{-1}(\lambda),\alpha_j+\alpha_0\right>&=\left<\lambda-\varpi_i^\vee,w_i\alpha_j+w_i\alpha_0\right>=\left<\lambda-\varpi_i^\vee,w_i\alpha_j-\alpha_i\right> \\
&=1-\left<\lambda,\alpha_i\right>+\left<\lambda-\varpi_i^\vee,w_i\alpha_j\right> \\
&=\left<\lambda,\alpha_0\right>+\left<\lambda-\varpi_i^\vee,w_i\alpha_j\right>.
\end{align*}
Now, two possibilities arise: whether $\omega_i(\varpi_j^\vee)=0$, in which case $w_i\alpha_j=-\alpha_0$, or $w_i\alpha_j=:\alpha_k\in\Pi\setminus\{\alpha_i\}$ is simple (and has $n_k=1$), after Lemma \ref{actionwionroots}. In both cases, the above quantity is at most $1$, finishing the proof.
\qed

\begin{rem}
The proof of $\mathrm{c})$ above gives a concrete way to find, given $\lambda\in\mathcal{A}_0\setminus F_{P^\vee}$, an element $\omega_i\in\Omega$ such that $\lambda\in \omega_i F_{P^\vee}$. Namely, it is the element corresponding to the minuscule coweight $\varpi_i^\vee$ with index $i$ such that $\left<\lambda,\alpha_i\right>=\max_{j\in J}\left<\lambda,\alpha_j\right>$.
\end{rem}

\begin{cor}\label{funddomforextendedaffineweylgroup}
The convex polytope of $V^*$ defined by
\begin{align*}
F_{P^\vee}:&=\{\lambda\in{\mathcal{A}_0}~;~\left<\lambda,\alpha_0+\alpha\right>\le1,~\forall\alpha\in\Pi~;~n_\alpha=1\} \\ 
&=\{\lambda\in V^*~;~\left<\lambda,\alpha_0\right>\le1,~\forall \alpha\in\Pi,~\left<\lambda,\alpha\right>\ge0~~\text{and}~~n_\alpha=1~\Rightarrow~\left<\lambda,\alpha_0+\alpha\right>\le1\}
\end{align*}
is a fundamental domain for the extended affine Weyl group $\widehat{W_\mathrm{a}}$. Moreover, if $\lambda\in F_{P^\vee}$ is an interior point and $1\ne\widehat{w}\in\widehat{W_{\rm a}}$, then $\widehat{w}(\lambda)\notin F_{P^\vee}$.
\end{cor}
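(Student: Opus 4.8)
The plan is to bootstrap the result from the two ingredients already established: that the fundamental alcove $\mathcal{A}_0$ is a fundamental domain for $W_{\mathrm a}$ on $V^*$ (Theorem \ref{fundalcovefunddom}) and that $F_{P^\vee}$ is a fundamental domain for $\Omega$ acting on $\mathcal{A}_0$ (Proposition \ref{funddomforOmega}), combined through the semidirect decomposition $\widehat{W_{\mathrm a}}\simeq W_{\mathrm a}\rtimes\Omega$. Since $F_{P^\vee}$ is by construction an intersection of finitely many closed half-spaces, it is a closed connected convex polytope, so the topological requirements are immediate and only the covering and empty-interior conditions need work. Throughout I would write an arbitrary element of $\widehat{W_{\mathrm a}}$ uniquely as $\widehat{w}=w\omega$ with $w\in W_{\mathrm a}$ and $\omega\in\Omega$, and use that $\omega$ stabilizes $\mathcal{A}_0$, so that $\widehat{w}F_{P^\vee}=w(\omega F_{P^\vee})\subseteq w\mathcal{A}_0$.

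For the covering, I would take $\lambda\in V^*$, apply Theorem \ref{fundalcovefunddom} to find $w\in W_{\mathrm a}$ with $w^{-1}\lambda\in\mathcal{A}_0$, and then invoke the covering property from Proposition \ref{funddomforOmega} to produce $\omega\in\Omega$ with $w^{-1}\lambda\in\omega F_{P^\vee}$; hence $\lambda\in w\omega F_{P^\vee}$ with $w\omega\in\widehat{W_{\mathrm a}}$. For the empty-interior condition, I would fix $1\ne\widehat{w}=w\omega$ and note $F_{P^\vee}\cap\widehat{w}F_{P^\vee}\subseteq\mathcal{A}_0\cap w\mathcal{A}_0$. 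If $w\ne1$, then $\mathcal{A}_0\cap w\mathcal{A}_0$ already has empty interior in $V^*$ because $\mathcal{A}_0$ is a fundamental domain for $W_{\mathrm a}$, so the intersection does too. If $w=1$, then $\widehat{w}=\omega\ne1$ lies in $\Omega$ and $F_{P^\vee}\cap\omega F_{P^\vee}$ has empty interior in $\mathcal{A}_0$ by Proposition \ref{funddomforOmega}; as this set is contained in the full-dimensional polytope $\mathcal{A}_0$, empty relative interior in $\mathcal{A}_0$ is equivalent to empty interior in $V^*$, closing the case.

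For the final statement about interior points, I would let $\lambda$ be an interior point of $F_{P^\vee}$ and suppose $\widehat{w}(\lambda)\in F_{P^\vee}$ for some $\widehat{w}=w\omega\ne1$. Since $F_{P^\vee}\subseteq\mathcal{A}_0$, the point $\lambda$ lies in the open alcove $\interior{\mathcal{A}_0}$, and as $\omega$ is a homeomorphism stabilizing $\mathcal{A}_0$ it maps interior to interior, so $\omega(\lambda)\in\interior{\mathcal{A}_0}$ as well. Now $\widehat{w}(\lambda)=w(\omega(\lambda))$ lies in $F_{P^\vee}\subseteq\mathcal{A}_0$, and because $w\in W_{\mathrm a}$ carries $\interior{\mathcal{A}_0}$ onto an open alcove, the point $w(\omega(\lambda))$ lies on no reflecting hyperplane; being also in $\mathcal{A}_0$, it must therefore lie in $\interior{\mathcal{A}_0}$. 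Hence $\interior{\mathcal{A}_0}$ and $w\cdot\interior{\mathcal{A}_0}$ share the point $w(\omega(\lambda))$, and since $W_{\mathrm a}$ acts simply transitively on open alcoves (Theorem \ref{fundalcovefunddom}) this forces $w=1$. We are reduced to $1\ne\omega\in\Omega$ with $\omega(\lambda)\in F_{P^\vee}$; writing $\omega=\omega_i^{-1}$, the ``moreover'' clause of Proposition \ref{funddomforOmega} gives $\left<\lambda,\alpha_i+\alpha_0\right>=1$, contradicting that an interior point satisfies all defining inequalities of $F_{P^\vee}$ strictly, in particular $\left<\lambda,\alpha_i+\alpha_0\right><1$.

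I expect the main obstacle to be the empty-interior bookkeeping: cleanly separating the $w\ne1$ and $w=1$ cases and, in the latter, passing between ``empty interior in $\mathcal{A}_0$'' (the conclusion supplied by Proposition \ref{funddomforOmega}) and ``empty interior in $V^*$'' (what the definition of a fundamental domain for $\widehat{W_{\mathrm a}}$ demands), which hinges on $\mathcal{A}_0$ being full-dimensional. The interior-point claim is subtler than the bare fundamental-domain property, and its crux is the simple-transitivity step that eliminates the $W_{\mathrm a}$-part $w$ — in particular the observation that the image of an interior point of $\mathcal{A}_0$ avoids all reflecting hyperplanes — before invoking the sharp form of Proposition \ref{funddomforOmega}.
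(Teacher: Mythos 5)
Your proof is correct and follows exactly the route the paper takes: the paper's own proof of this corollary is a one-line remark that it is ``obvious'' from Proposition \ref{funddomforOmega}, the fact that $\mathcal{A}_0$ is a fundamental domain for $W_{\mathrm a}$ (Theorem \ref{fundalcovefunddom}), and the decomposition $\widehat{W_{\mathrm a}}=W_{\mathrm a}\rtimes\Omega$ --- precisely the three ingredients you combine. Your write-up simply supplies the details the paper leaves implicit (the case split $w\ne1$ versus $w=1$, the passage from relative to ambient empty interior, and the simple-transitivity argument eliminating the $W_{\mathrm a}$-part in the interior-point claim), all of which are sound.
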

\begin{proof}
This is obvious using the Proposition \ref{funddomforOmega}, the fact that ${\mathcal{A}_0}$ is a fundamental domain for $W_\mathrm{a}$ and since $\widehat{W_\mathrm{a}}=W_\mathrm{a}\rtimes\Omega$.
\end{proof}

Combining the Proposition \ref{funddomforOmega} and the Lemmae \ref{Omegaactsonvertices}, \ref{actionwimonroots} and \ref{actionwionroots}, we obtain an explicit expression for the partial action of $\Omega$ on $F_{P^\vee}$, in terms of permutations of the nodes of the extended Dynkin diagram, as detailed in the Table \ref{extendeddynkindiagrams}. More precisely, we have the following result:
\begin{cor}[{\cite[Theorem, (4)]{komrakov-premet}}]\label{explicit_action}
Let $\lambda=\sum_{i=1}^r\lambda_i\varpi_i^\vee\in F_{P^\vee}$ and $j\in J$ such that $\left<\lambda,\alpha_j+\alpha_0\right>=1$. Then, we have
\begin{equation}\label{explicit_action_eq}
\omega_j^{-1}(\lambda)=\lambda_{j}\varpi^\vee_{\sigma_j^{-1}(0)}+\sum_{\sigma_j(i)\ne0}\lambda_{\sigma_j(i)}\varpi_i^\vee,
\end{equation}
where $\sigma_j$ is the permutation of the nodes $\{0,1,\dotsc,r\}$ of the extended Dynkin diagram of $\Phi$ (see Table \ref{extendeddynkindiagrams}) corresponding to the action of $\omega_j$ on the vertices of $\mathcal{A}_0$.
\end{cor}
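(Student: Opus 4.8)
The plan is to compute the fundamental-coweight coordinates of $\omega_j^{-1}(\lambda)$ one at a time, by pairing against each simple root, and to read off the resulting indices through the combinatorics of $\sigma_j$. First I would fix the dictionary between the nodes $\{0,1,\dotsc,r\}$ of the extended Dynkin diagram and the vertices $\{0\}\cup\{\varpi_i^\vee/n_i\}_{i\in I}$ of $\mathcal{A}_0$: the affine node $0$ matches the vertex $0$, while a node $i\in I$ matches $\varpi_i^\vee/n_i$ (the unique vertex off the wall $H_{\alpha_i,0}$, since $\langle\varpi_i^\vee/n_i,\alpha_i\rangle=1/n_i$ whereas $\langle\varpi_i^\vee/n_i,\alpha_\ell\rangle=0$ for $\ell\ne i$). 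Under this identification $\sigma_j$ is characterized by $\omega_j(\text{vertex }i)=\text{vertex }\sigma_j(i)$; in particular $\omega_j^{-1}(0)=\varpi_{\sigma_j^{-1}(0)}^\vee$, the denominator being $1$ because $\sigma_j^{-1}(0)\in J$ by Lemma \ref{actionwionroots}.

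Writing $\omega_j^{-1}=\mathrm{t}_{\omega_j^{-1}(0)}w_j^{-1}$ gives $\omega_j^{-1}(\lambda)=w_j^{-1}(\lambda)+\varpi_{\sigma_j^{-1}(0)}^\vee$. Since the $\varpi_k^\vee$-coordinate of a vector $v\in V^*$ equals $\langle v,\alpha_k\rangle$, and using the $W$-invariance of the pairing, the $k$-th coordinate of $\omega_j^{-1}(\lambda)$ is $\langle\lambda,w_j\alpha_k\rangle+\delta_{\sigma_j^{-1}(0),k}$. It then remains to evaluate $\langle\lambda,w_j\alpha_k\rangle$ using the explicit action of $w_j$ on simple roots.

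Here Lemma \ref{actionwionroots} furnishes the dichotomy. If $\omega_j(\varpi_k^\vee)\ne0$, then $w_j\alpha_k$ is simple and the biconditional of Lemma \ref{actionwimonroots} (applied with the fixed index $j$) identifies it as $\alpha_{\sigma_j(k)}$, so the coordinate is $\lambda_{\sigma_j(k)}$; moreover $k\ne\sigma_j^{-1}(0)$, so the Kronecker term drops out. If instead $\omega_j(\varpi_k^\vee)=0$, then $k=\sigma_j^{-1}(0)$ (and $k\in J$), the Kronecker term equals $1$, and $w_j\alpha_k=-\alpha_0$; the hypothesis $\langle\lambda,\alpha_j+\alpha_0\rangle=1$ rewrites as $\langle\lambda,\alpha_0\rangle=1-\lambda_j$, so the coordinate becomes $-\langle\lambda,\alpha_0\rangle+1=\lambda_j$. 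Collecting these coordinates, and using that $k\ne\sigma_j^{-1}(0)$ is equivalent to $\sigma_j(k)\ne0$, produces precisely the formula \eqref{explicit_action_eq}.

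The one point requiring care is keeping the permutation $\sigma_j$ oriented correctly: it records the action of $\omega_j$ (not $\omega_j^{-1}$) on the vertices, so that $w_j\alpha_k=\alpha_{\sigma_j(k)}$ whereas $\omega_j^{-1}(0)=\varpi_{\sigma_j^{-1}(0)}^\vee$, and one must check that the single exceptional node is exactly the one sent to $0$, which is where the constraint on $\lambda$ enters. Everything else is immediate once Lemmae \ref{Omegaactsonvertices}, \ref{actionwimonroots} and \ref{actionwionroots} are invoked.
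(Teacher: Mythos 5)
Your proof is correct and follows essentially the same route as the paper's: both compute the fundamental-coweight coordinates of $\omega_j^{-1}(\lambda)$ by pairing against simple roots, invoke Lemmae \ref{actionwimonroots} and \ref{actionwionroots} to evaluate $w_j$ on $\Pi$ (simple root vs.\ lowest root dichotomy), and use the hypothesis $\left<\lambda,\alpha_j+\alpha_0\right>=1$ exactly at the node sent to $0$. The only difference is cosmetic: you write $\omega_j^{-1}=\mathrm{t}_{\omega_j^{-1}(0)}w_j^{-1}$ so the translation appears as a Kronecker term $\delta_{\sigma_j^{-1}(0),k}$, whereas the paper writes $\omega_j^{-1}(\lambda)=w_j^{-1}(\lambda-\varpi_j^\vee)$ and keeps the translation inside the pairing.
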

\begin{proof}
Let us write $\omega_j^{-1}(\lambda)=\sum_{i=1}^r\mu_i\varpi_i^\vee$. For any $1\le i \le r$, we have
\begin{align*}
\mu_i&=\left<\omega_j^{-1}(\lambda),\alpha_i\right>=\left<\lambda-\varpi_j^\vee,w_j\alpha_i\right>=\left\{\begin{array}{ll}1-\left<\lambda,\alpha_0\right> & \text{if }\omega_j(\varpi_i^\vee)=0,\\[.5em] \left<\lambda,\alpha_k\right> & \text{if }\omega_j\left(\tfrac{\varpi_i^\vee}{n_i}\right)=\tfrac{\varpi_k^\vee}{n_k}.\end{array}\right. \\
&=\left\{\begin{array}{ll}\lambda_j & \text{if }\sigma_j(i)=0, \\[.5em] \lambda_{\sigma_j(i)} & \text{otherwise.}\end{array}\right.
\end{align*}
\end{proof}

\begin{rem}
In the table of \cite{komrakov-premet}, the elements $\sigma_i$ are expressed as permutations of the subset $\{1,\dotsc,r\}$ of simple roots, forgetting the action on $0$ and its preimage. This convention has the advantage of giving a more compact expression for the partial action of $\Omega$ on $F_{P^\vee}$. Indeed, keeping the notation of the Corollary \ref{explicit_action}, the convention of Komrakov--Premet for $\sigma_i$ permits to rewrite \eqref{explicit_action_eq} as $\omega_j^{-1}(\lambda)=\sum_{i=1}^r\lambda_{\sigma_j(i)}\varpi_i^\vee$. However, our approach allows to identify $\Omega$ with a subgroup of $\Sym(\{0,\dotsc,r\})$, formed by automorphisms of the extended diagram, as explained in the Corollary \ref{pi(Omega)=Aut(Dyn0)}.
\end{rem}

We can now investigate the injectivity condition. We have the following result:
\begin{lem}\label{A0inter(A0+P)=0}
The hypothesis of the Lemma \ref{fromlatticetotorus} is fulfilled in the adjoint case. In other words, one has
\[{F_{P^\vee}}\cap(P^\vee\setminus\{0\}+{F_{P^\vee}})=\emptyset.\]
\end{lem}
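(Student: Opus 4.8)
The statement is exactly the hypothesis of Lemma~\ref{fromlatticetotorus} with $Y=P^\vee$, and it is equivalent to the implication: if $\lambda,\lambda'\in F_{P^\vee}$ satisfy $v:=\lambda-\lambda'\in P^\vee$, then $v=0$. The plan is to prove this implication directly, combining the half-space description of $F_{P^\vee}$ recalled in Corollary~\ref{funddomforextendedaffineweylgroup} with the crude bounds coming from the inclusion $F_{P^\vee}\subseteq\mathcal{A}_0$. No use of the decomposition $\widehat{W_{\rm a}}=W_{\rm a}\rtimes\Omega$ should be needed.

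First I would record, for any $\mu\in\mathcal{A}_0$ and any index $i$, the inequalities $0\le\langle\mu,\alpha_i\rangle\le 1/n_i$. The lower bound is the definition of $\mathcal{A}_0$, and since $\langle\mu,\alpha_0\rangle=\sum_k n_k\langle\mu,\alpha_k\rangle$ is a sum of nonnegative terms which is at most $1$, each individual term $n_i\langle\mu,\alpha_i\rangle$ is at most $1$. Applying this to both $\lambda$ and $\lambda'$ shows that the coordinates $c_i:=\langle v,\alpha_i\rangle=\langle\lambda,\alpha_i\rangle-\langle\lambda',\alpha_i\rangle$ all lie in $[-1,1]$; since $v\in P^\vee$ they are integers, so $c_i\in\{-1,0,1\}$ and $v=\sum_i c_i\varpi_i^\vee$.

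The crux is then to rule out any nonzero $c_i$. Suppose some $c_a=1$. From $1=\langle\lambda,\alpha_a\rangle-\langle\lambda',\alpha_a\rangle\le 1/n_a$ I would first deduce $n_a=1$, i.e. $a\in J$, and then squeeze $\langle\lambda,\alpha_a\rangle=1$ and $\langle\lambda',\alpha_a\rangle=0$. Feeding $\langle\lambda,\alpha_a\rangle=1$ back into $\langle\lambda,\alpha_0\rangle=\sum_k n_k\langle\lambda,\alpha_k\rangle\le 1$ (all terms nonnegative, the $k=a$ term already being $1$) pins down $\langle\lambda,\alpha_0\rangle=1$, whence $\langle\lambda,\alpha_0+\alpha_a\rangle=2$. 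As $a\in J$, this contradicts the defining minuscule inequality $\langle\lambda,\alpha_0+\alpha_a\rangle\le 1$ of $F_{P^\vee}$. A symmetric argument, exchanging the roles of $\lambda$ and $\lambda'$ and using $\langle\lambda',\alpha_0+\alpha_b\rangle\le 1$, disposes of any $c_b=-1$. Hence all $c_i=0$, that is $v=0$, as required.

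I expect the genuine content — and the step most worth double-checking — to be this last interaction between the alcove bound $\langle\lambda,\alpha_i\rangle\le 1/n_i$ and the minuscule walls of $F_{P^\vee}$. Indeed the inclusion $F_{P^\vee}\subseteq\mathcal{A}_0$ \emph{alone} does not suffice: the two vertices $\lambda=\varpi_a^\vee$ and $\lambda'=0$ of $\mathcal{A}_0$ differ by the nonzero coweight $\varpi_a^\vee\in P^\vee$, so it is precisely the extra half-spaces indexed by $J$ that forbid the extremal configuration a nonzero minuscule translation would force. Everything else is routine bookkeeping with the pairing.
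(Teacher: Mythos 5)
Your proof is correct, and it reaches the same key contradiction as the paper --- squeezing the inequalities until $\left<\cdot,\alpha_0+\alpha_a\right>=2$ violates a minuscule wall of $F_{P^\vee}$ --- but it is organized differently and is more self-contained. The paper splits into two cases according to whether $\mu:=\lambda-\lambda'$ lies in $\mathcal{A}_0$. When $\mu\notin\mathcal{A}_0$ it runs exactly your $c_b=-1$ argument: an index with $\left<\mu,\alpha_i\right>=-1$ forces $\left<\lambda',\alpha_i\right>=1$, then $\left<\lambda',\alpha_0\right>=n_i=1$, contradicting $\left<\lambda',\alpha_0+\alpha_i\right>\le1$. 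When $\mu\in\mathcal{A}_0$, however, it invokes the classification $\mathcal{A}_0\cap P^\vee=\{0\}\cup\{\varpi_k^\vee\}_{k\in J}$ from \cite[VI, \S 2.2, Proposition 5]{bourbaki456} to write $\mu=\varpi_k^\vee$ with $k\in J$, and then contradicts the wall inequality for $\lambda$. Your coordinate-wise formulation ($c_i\in\{-1,0,1\}$, with $+1$ and $-1$ ruled out by the symmetric squeeze) handles both cases uniformly and eliminates the appeal to Bourbaki: the fact that a positive coordinate can only sit at a minuscule index falls out of your bound $\left<\lambda,\alpha_a\right>\le 1/n_a$ rather than being quoted. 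What the paper's route buys is the conceptual intermediate statement that a nonzero difference would have to \emph{be} a minuscule coweight, a fact it reuses in the general-lattice setting (e.g.\ in the proof of Proposition \ref{injectivityofexponfunddomintermediatecase}); what yours buys is elementarity and full symmetry in $\lambda\leftrightarrow\lambda'$. Your closing sanity check --- that the inclusion $F_{P^\vee}\subseteq\mathcal{A}_0$ alone cannot suffice, since $\varpi_a^\vee$ and $0$ both lie in $\mathcal{A}_0$ and differ by a nonzero coweight --- correctly pinpoints why the half-spaces indexed by $J$ are exactly what makes the lemma true.
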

\begin{proof}
Let $\lambda,\lambda'\in F_{P^\vee}\subset{\mathcal{A}_0}$ such that $\mu:=\lambda-\lambda'\in P^\vee$. We have to show that $\mu=0$. First observe that $-1\le\left<\mu,\alpha\right>\le1$ for any root $\alpha\in\Phi$.

Suppose that $\mu\notin{\mathcal{A}_0}$. Since $\left<\mu,\alpha_0\right>\le1$, there must be some $i$ such that $-1\le\left<\mu,\alpha_i\right><0$. But $\left<\mu,\alpha_i\right>\in\Z$ since we have assumed that $\mu\in P^\vee$, so we get $\left<\mu,\alpha_i\right>=-1$. Thus, we compute $1\ge\left<\lambda',\alpha_i\right>=\left<\lambda,\alpha_i\right>+1\ge1$ and hence, $\left<\lambda',\alpha_i\right>=1$. Furthermore, we have
\[1\ge\left<\lambda',\alpha_0\right>=\sum_j n_j\left<\lambda',\alpha_j\right>=n_i+\sum_{j\ne i}n_j\underbrace{\left<\lambda',\alpha_j\right>}_{\ge0}\ge n_i\ge1~\Rightarrow~\left<\lambda',\alpha_0\right>=n_i=1\]
and thus, $\left<\lambda',\alpha_0+\alpha_i\right>=2$, a contradiction since $\lambda'\in F_{P^\vee}$.

We have shown that $\mu\in{\mathcal{A}_0}\cap P^\vee$. By \cite[VI, \S 2.2, Prop. 5]{bourbaki456}, if $\mu\ne0$, then there is some $k\in J$ such that $\mu=\varpi_k^\vee$. Since $\lambda\in F_{P^\vee}$, we have $1\ge\left<\lambda,\alpha_0+\alpha_k\right>=\underbrace{\left<\lambda',\alpha_0+\alpha_k\right>}_{\ge0}+2\ge2$, which is absurd.
\end{proof}

\begin{center}
\resizebox{0.99\textwidth}{!}{
\begin{tabular}{|c|c|c|c|}
\hline
Type & Extended Dynkin diagram & Fundamental group $\Omega\simeq P/Q$ & Permutations $\sigma_i$ representing non-trivial elements $\omega_i$ of $\Omega\le\mathrm{Aut}(\widehat{\mathcal{D}})$ \\
\hline
\hline
$\widetilde{A_1}$ & \begin{minipage}{0.5\textwidth}\centering \begin{tikzpicture}
	\coordinate (a) at (0,0);
	\coordinate (b) at (1,0);
	
	\draw (a) node[below]{$1$};
	\draw (b) node[below]{$0$};
	
	\draw (a)--(b);
	
	\fill[fill=white] (a) circle (2.5pt);
	\fill[fill=white] (b) circle (2.5pt);
	\node[mark size=2.5pt] at (b) {\pgfuseplotmark{otimes}};
	\draw (a) circle (2.5pt);
	
	\draw (1/2,0) node[above]{$\infty$};
\end{tikzpicture}\end{minipage} & $\Z/2\Z$ & $\sigma_1=(0,1)$ \\
\hline
 $\widetilde{A_n}~(n\ge2)$ & \begin{minipage}{0.5\textwidth}\centering \begin{tikzpicture}[scale=1]
	\coordinate (a) at (-2,0);
	\coordinate (b) at (-1,0);
	\coordinate (c) at (0,0);
	\coordinate (d) at (1,0);
	\coordinate (e) at (2,0);
	\coordinate (f) at (0,1);
	\coordinate (bc) at (-1/2,0);
	\coordinate (cd) at (1/2,0);
	
	\draw (a) node[below]{$1$};
	\draw (b) node[below]{$2$};
	\draw (c) node{$\cdots$};
	\draw (d) node[below]{${n-1}$};
	\draw (e) node[below]{$n$};
	\draw (f) node[above]{$0$};
	
	\draw (a)--(b)
	(b)--(bc)
	(cd)--(d)
	(d)--(e)
	(e)--(f)
	(f)--(a);
	
	\fill[fill=white] (a) circle (2.5pt);
	\fill[fill=white] (b) circle (2.5pt);
	\fill[fill=white] (d) circle (2.5pt);
	\fill[fill=white] (e) circle (2.5pt);
	\fill[fill=white] (f) circle (2.5pt);
	
	\draw (a) circle (2.5pt);
	\draw (b) circle (2.5pt);
	\draw (d) circle (2.5pt);
	\draw (e) circle (2.5pt);
	\node[mark size=2.5pt] at (f) {\pgfuseplotmark{otimes}};
\end{tikzpicture}\end{minipage} & $\Z/(n+1)\Z$ & $\begin{array}{ll} \sigma_{1}=(0,1,2,\cdots,n) \\ \\ \sigma_{i}=(\sigma_{1})^i,~1\le i \le n\end{array}$ \\
\hline
 & & & \\
$\widetilde{B_2}=\widetilde{C_2}$ & \begin{minipage}{0.5\textwidth}\centering \begin{tikzpicture}[scale=1]
	\coordinate (a) at (0,0);
	\coordinate (e) at (1,0);
	\coordinate (f) at (2,0);
	
	\draw (a) node[below]{$0$};
	\draw (e) node[below]{$1$};
	\draw (f) node[below]{$2$};
	
	\draw[decoration={markings, mark=at position 0.75 with {\arrow{angle 60}}},postaction={decorate},double distance=2.5pt] (a)--(e);
	\draw[decoration={markings, mark=at position 0.75 with {\arrow{angle 60}}},postaction={decorate},double distance=2.5pt] (f)--(e);
	
	\fill[fill=white] (a) circle (2.5pt);
	\node[mark size=2.5pt] at (a) {\pgfuseplotmark{otimes}};
	\fill[fill=black] (e) circle (2.5pt);
	\fill[fill=white] (f) circle (2.5pt);
    \draw (f) circle (2.5pt);
\end{tikzpicture}\end{minipage} & $\Z/2\Z$ & $\sigma_{1}=(0,2)$ \\
 & & & \\
\hline
$\widetilde{B_n}~(n\ge3)$ & \begin{minipage}{0.5\textwidth}\centering \begin{tikzpicture}[scale=1]
	\coordinate (a) at (-1/2,1.732/2);
	\coordinate (b) at (-1/2,-1.732/2);
	\coordinate (c) at (1/2,0);
	\coordinate (d) at (1+1/2,0);
	\coordinate (e) at (2+1/2,0);
	\coordinate (f) at (3+1/2,0);
	\coordinate (g) at (4+1/2,0);
	
	\draw (a) node[left]{$1$};
	\draw (b) node[left]{$0$};
	\draw (c) node[below]{$2$};
	\draw (d) node[below]{$3$};
	\draw (e) node{$\cdots$};
	\draw (f) node[below]{${n-1}$};
	\draw (g) node[below]{$n$};
	
	\draw (a)--(c)
	(b)--(c)
	(c)--(d)
	(d)--(2,0)
	(3,0)--(f);
	\draw[decoration={markings, mark=at position 0.75 with {\arrow{angle 60}}},postaction={decorate},double distance=2.5pt] (f)--(g);
	
	\fill[fill=white] (a) circle (2.5pt);
	\fill[fill=black] (c) circle (2.5pt);
	\fill[fill=black] (d) circle (2.5pt);
	\fill[fill=black] (f) circle (2.5pt);
	\fill[fill=black] (g) circle (2.5pt);
	\fill[fill=white] (b) circle (2.5pt);
	\node[mark size=2.5pt] at (b) {\pgfuseplotmark{otimes}};
	\draw (a) circle (2.5pt);
\end{tikzpicture}\end{minipage} & $\Z/2\Z$ & $\sigma_{1}=(0,1)$ \\
\hline
 & & & \\
$\widetilde{C_n}~(n\ge3)$ & \begin{minipage}{0.5\textwidth}\centering \begin{tikzpicture}[scale=1]
	\coordinate (a) at (-3,0);
	\coordinate (b) at (-2,0);
	\coordinate (c) at (-1,0);
	\coordinate (d) at (0,0);
	\coordinate (e) at (1,0);
	\coordinate (f) at (2,0);
	
	\draw (a) node[below]{$0$};
	\draw (b) node[below]{$1$};
	\draw (c) node[below]{$2$};
	\draw (d) node{$\cdots$};
	\draw (e) node[below]{${n-1}$};
	\draw (f) node[below]{$n$};
	
	\draw[decoration={markings, mark=at position 0.75 with {\arrow{angle 60}}},postaction={decorate},double distance=2.5pt] (a)--(b);
	\draw[decoration={markings, mark=at position 0.75 with {\arrow{angle 60}}},postaction={decorate},double distance=2.5pt] (f)--(e);
	\draw (b)--(c)
	(c)--(-1/2,0)
	(1/2,0)--(e);
	
	\fill[fill=white] (a) circle (2.5pt);
	\node[mark size=2.5pt] at (a) {\pgfuseplotmark{otimes}};
	\fill[fill=black] (b) circle (2.5pt);
	\fill[fill=black] (c) circle (2.5pt);
	\fill[fill=black] (e) circle (2.5pt);
	\fill[fill=white] (f) circle (2.5pt);
	
    \draw (f) circle (2.5pt);
\end{tikzpicture}\end{minipage} & $\Z/2\Z$ & $\displaystyle{\sigma_{n}=(0,n)\prod_{i=1}^{\left\lfloor \frac{n-1}{2}\right\rfloor}(i,{n-i})}$ \\
 & & & \\
\hline
 & & & \\
$\widetilde{D_{2n}}~(n\ge2)$ & \begin{minipage}{0.5\textwidth}\centering \begin{tikzpicture}[scale=1]
	\coordinate (a) at (-1/2,1.732/2);
	\coordinate (b) at (-1/2,-1.732/2);
	\coordinate (c) at (1/2,0);
	\coordinate (d) at (1+1/2,0);
	\coordinate (e) at (2+1/2,0);
	\coordinate (f) at (3+1/2,0);
	\coordinate (g) at (4+1/2,1.732/2);
	\coordinate (h) at (4+1/2,-1.732/2);
	
	\draw (a) node[left]{$1$};
	\draw (b) node[left]{$0$};
	\draw (c) node[below]{$2$};
	\draw (d) node[below]{$3$};
	\draw (e) node{$\cdots$};
	\draw (f) node[right]{${2n-2}$};
	\draw (g) node[right]{${2n}$};
	\draw (h) node[right]{${2n-1}$};
	
	\draw (a)--(c)
	(b)--(c)
	(c)--(d)
	(d)--(2,0)
	(3,0)--(f)
	(f)--(g)
	(f)--(h);
	
	\fill[fill=white] (a) circle (2.5pt);
	\fill[fill=black] (c) circle (2.5pt);
	\fill[fill=black] (d) circle (2.5pt);
	\fill[fill=black] (f) circle (2.5pt);
	\fill[fill=white] (g) circle (2.5pt);
	\fill[fill=white] (b) circle (2.5pt);
	\fill[fill=white] (h) circle (2.5pt);
	\node[mark size=2.5pt] at (b) {\pgfuseplotmark{otimes}};
	\draw (a) circle (2.5pt);
	\draw (g) circle (2.5pt);
	\draw (h) circle (2.5pt);
\end{tikzpicture}\end{minipage} & $\Z/2\Z\oplus\Z/2\Z$ & $\displaystyle{\begin{array}{llll} \sigma_{1}=(0,1)({2n-1},{2n}) \\ \\ \sigma_{{2n-1}}=(0,{2n-1})(1,{2n})\prod_{i=2}^{n-1}(i,{2n-i}) \\ \\ \sigma_{{2n}}=(0,{2n})(1,{2n-1})\prod_{i=2}^{n-1}(i,{2n-i})=\sigma_{1}\sigma_{{2n-1}}\end{array}}$ \\
 & & & \\
\hline
 & & & \\
$\widetilde{D_{2n+1}}~(n\ge2)$ & \begin{minipage}{0.5\textwidth}\centering \begin{tikzpicture}[scale=1]
	\coordinate (a) at (-1/2,1.732/2);
	\coordinate (b) at (-1/2,-1.732/2);
	\coordinate (c) at (1/2,0);
	\coordinate (d) at (1+1/2,0);
	\coordinate (e) at (2+1/2,0);
	\coordinate (f) at (3+1/2,0);
	\coordinate (g) at (4+1/2,1.732/2);
	\coordinate (h) at (4+1/2,-1.732/2);
	
	\draw (a) node[left]{$1$};
	\draw (b) node[left]{$0$};
	\draw (c) node[below]{$2$};
	\draw (d) node[below]{$3$};
	\draw (e) node{$\cdots$};
	\draw (f) node[right]{${2n-1}$};
	\draw (g) node[right]{${2n+1}$};
	\draw (h) node[right]{${2n}$};
	
	\draw (a)--(c)
	(b)--(c)
	(c)--(d)
	(d)--(2,0)
	(3,0)--(f)
	(f)--(g)
	(f)--(h);
	
	\fill[fill=white] (a) circle (2.5pt);
	\fill[fill=black] (c) circle (2.5pt);
	\fill[fill=black] (d) circle (2.5pt);
	\fill[fill=black] (f) circle (2.5pt);
	\fill[fill=white] (g) circle (2.5pt);
	\fill[fill=white] (b) circle (2.5pt);
	\fill[fill=white] (h) circle (2.5pt);
	\node[mark size=2.5pt] at (b) {\pgfuseplotmark{otimes}};
	\draw (a) circle (2.5pt);
	\draw (g) circle (2.5pt);
	\draw (h) circle (2.5pt);
\end{tikzpicture}\end{minipage} & $\Z/4\Z$ & $\displaystyle{\begin{array}{llll} \sigma_{1}=(0,1)({2n},{2n+1}) \\ \\ \sigma_{{2n}}=(0,{2n},1,{2n+1})\prod_{i=2}^{n}(i,{2n+1-i}) \\ \\ \sigma_{{2n+1}}=(0,{2n+1},1,{2n})\prod_{i=2}^{n}(i,{2n+1-i})\end{array}}$ \\
 & & & \\
\hline
$\widetilde{E_6}$ & \begin{minipage}{0.5\textwidth}\centering \begin{tikzpicture}
	\coordinate (a) at (-2,0);
	\coordinate (b) at (-1,0);
	\coordinate (c) at (0,0);
	\coordinate (d) at (1,0);
	\coordinate (e) at (2,0);
	\coordinate (f) at (0,1);
	\coordinate (g) at (0,2);
	
	\draw (a) node[below]{$1$};
	\draw (b) node[below]{$3$};
	\draw (c) node[below]{$4$};
	\draw (d) node[below]{$5$};
	\draw (e) node[below]{$6$};
	\draw (f) node[right]{$2$};
	\draw (g) node[right]{$0$};
	
	\draw (a)--(b)
	(b)--(c)
	(c)--(d)
	(d)--(e)
	(c)--(f)
	(f)--(g);
	
	\fill[fill=white] (a) circle (2.5pt);
	\fill[fill=white] (e) circle (2.5pt);
	\fill[fill=white] (g) circle (2.5pt);
	\fill[fill=black] (b) circle (2.5pt);
	\fill[fill=black] (c) circle (2.5pt);
	\fill[fill=black] (d) circle (2.5pt);
	\fill[fill=black] (f) circle (2.5pt);
	\draw (a) circle (2.5pt);
	\draw (e) circle (2.5pt);
	\node[mark size=2.5pt] at (g) {\pgfuseplotmark{otimes}};
\end{tikzpicture}\end{minipage} & $\Z/3\Z$ & $\begin{array}{ll} \sigma_{1}=(0,1,6)(2,3,5) \\ \\ \sigma_{6}=(1,0,6)(3,2,5)=\sigma_{1}^{-1}\end{array}$ \\
\hline
$\widetilde{E_7}$ & \begin{minipage}{0.5\textwidth}\centering \begin{tikzpicture}
	\coordinate (a) at (-2,0);
	\coordinate (b) at (-1,0);
	\coordinate (c) at (0,0);
	\coordinate (d) at (1,0);
	\coordinate (e) at (2,0);
	\coordinate (f) at (0,1);
	\coordinate (g) at (3,0);
	\coordinate (h) at (-3,0);
	
	\draw (a) node[below]{$1$};
	\draw (b) node[below]{$3$};
	\draw (c) node[below]{$4$};
	\draw (d) node[below]{$5$};
	\draw (e) node[below]{$6$};
	\draw (f) node[right]{$2$};
	\draw (g) node[below]{$7$};
	\draw (h) node[below]{$0$};
	
	\draw (a)--(b)
	(b)--(c)
	(c)--(d)
	(d)--(e)
	(c)--(f)
	(e)--(g)
	(h)--(a);
	
	\fill[fill=black] (a) circle (2.5pt);
	\fill[fill=black] (e) circle (2.5pt);
	\fill[fill=white] (g) circle (2.5pt);
	\fill[fill=white] (h) circle (2.5pt);
	\fill[fill=black] (b) circle (2.5pt);
	\fill[fill=black] (c) circle (2.5pt);
	\fill[fill=black] (d) circle (2.5pt);
	\fill[fill=black] (f) circle (2.5pt);
	\draw (g) circle (2.5pt);
	\node[mark size=2.5pt] at (h) {\pgfuseplotmark{otimes}};
\end{tikzpicture}\end{minipage} & $\Z/2\Z$ & $\sigma_{7}=(0,7)(1,6)(3,5)$ \\
\hline
$\widetilde{E_8}$ & \begin{minipage}{0.5\textwidth}\centering \begin{tikzpicture}
	\coordinate (a) at (-2,0);
	\coordinate (b) at (-1,0);
	\coordinate (c) at (0,0);
	\coordinate (d) at (1,0);
	\coordinate (e) at (2,0);
	\coordinate (f) at (0,1);
	\coordinate (g) at (3,0);
	\coordinate (h) at (5,0);
	\coordinate (i) at (4,0);
	
	\draw (a) node[below]{$1$};
	\draw (b) node[below]{$3$};
	\draw (c) node[below]{$4$};
	\draw (d) node[below]{$5$};
	\draw (e) node[below]{$6$};
	\draw (f) node[right]{$2$};
	\draw (g) node[below]{$7$};
	\draw (h) node[below]{$0$};
	\draw (i) node[below]{$8$};
	
	\draw (a)--(b)
	(b)--(c)
	(c)--(d)
	(d)--(e)
	(c)--(f)
	(e)--(g)
	(h)--(i)
	(g)--(i);
	
	\fill[fill=black] (a) circle (2.5pt);
	\fill[fill=black] (e) circle (2.5pt);
	\fill[fill=black] (g) circle (2.5pt);
	\fill[fill=white] (h) circle (2.5pt);
	\fill[fill=black] (b) circle (2.5pt);
	\fill[fill=black] (c) circle (2.5pt);
	\fill[fill=black] (d) circle (2.5pt);
	\fill[fill=black] (f) circle (2.5pt);
	\fill[fill=black] (i) circle (2.5pt);
	\node[mark size=2.5pt] at (h) {\pgfuseplotmark{otimes}};
\end{tikzpicture}\end{minipage} & $1$ & $\varnothing$ \\
\hline
 & & & \\
\centering{$\widetilde{F_4}$} & \begin{minipage}{0.5\textwidth}\centering \begin{tikzpicture}
	\coordinate (a) at (-2,0);
	\coordinate (b) at (-1,0);
	\coordinate (c) at (0,0);
	\coordinate (d) at (1,0);
	\coordinate (e) at (2,0);
	
	\draw (a) node[below]{$0$};
	\draw (b) node[below]{$1$};
	\draw (c) node[below]{$2$};
	\draw (d) node[below]{$3$};
	\draw (e) node[below]{$4$};
	
	\draw (a)--(b)
	(b)--(c)
	(d)--(e);	
	\draw[decoration={markings, mark=at position 0.75 with {\arrow{angle 60}}},postaction={decorate},double distance=2.5pt] (c)--(d);
	
	\fill[fill=white] (a) circle (2.5pt);
	\node[mark size=2.5pt] at (a) {\pgfuseplotmark{otimes}};
	\fill[fill=black] (b) circle (2.5pt);
	\fill[fill=black] (c) circle (2.5pt);
	\fill[fill=black] (d) circle (2.5pt);
	\fill[fill=black] (e) circle (2.5pt);
\end{tikzpicture}\end{minipage} & $1$ & $\varnothing$ \\
\hline
 & & & \\
$\widetilde{G_2}$ & \begin{minipage}{0.5\textwidth}\centering \begin{tikzpicture}
	\coordinate (a) at (-1,0);
	\coordinate (b) at (0,0);
	\coordinate (c) at (1,0);
	
	\draw (a) node[below]{$1$};
	\draw (b) node[below]{$2$};
	\draw (c) node[below]{$0$};
	
	\draw (b)--(c);
	\draw[decoration={markings, mark=at position 0.75 with {\arrow{angle 60}}},postaction={decorate},double distance=2.5pt] (b)--(a);
	\draw (b)--(a);
	
	\fill[fill=white] (c) circle (2.5pt);
	\node[mark size=2.5pt] at (c) {\pgfuseplotmark{otimes}};
	\fill[fill=black] (a) circle (2.5pt);
	\fill[fill=black] (b) circle (2.5pt);
\end{tikzpicture}\end{minipage} & $1$ & $\varnothing$ \\
\hline
\end{tabular}}
\captionof{table}{Extended Dynkin diagrams and fundamental groups elements, represented as permutations of the nodes.}
\label{extendeddynkindiagrams}
\end{center}

\begin{cor}\label{funddomforKadjointcase}
If $K$ is an adjoint group, then $\exp(F_{P^\vee})$ is homeomorphic to the polytope $F_{P^\vee}$ and is a fundamental domain for the action of $W$ on $T$. %In particular, the decomposition \[T=\coprod_{w\in W}w\cdot\exp(F_{P^\vee})\] is cellular, regular and $W$-equivariant.
\end{cor}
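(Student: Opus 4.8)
The plan is to recognize that this corollary is a direct synthesis of results already established, so the strategy is one of assembling the correct pieces rather than proving anything substantially new. The first observation I would make is purely a matter of bookkeeping: an adjoint group corresponds to the character lattice $X = Q$ (Theorem \ref{X(T)charactK}), so its cocharacter lattice is $Y = X^\wedge = Q^\wedge = P^\vee$. Consequently the group $W_Y = \mathrm{t}(P^\vee) \rtimes W$ is exactly the extended affine Weyl group $\widehat{W_\mathrm{a}}$, and the maximal torus is $T \cong V^*/P^\vee$ via the normalized exponential map of Lemma \ref{ker(exp)}.

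With this identification in place, I would invoke Corollary \ref{funddomforextendedaffineweylgroup}, which gives that $F_{P^\vee}$ is a fundamental domain for $\widehat{W_\mathrm{a}} = W_Y$ acting on $V^*$. Applying the first part of Lemma \ref{fromlatticetotorus} with $Y = P^\vee$ then immediately yields that $\mathcal{F} := \pr(F_{P^\vee})$ is a fundamental domain for the induced $W$-action on $V^*/P^\vee$. To upgrade this to the homeomorphism statement, I would verify the injectivity hypothesis of the second part of that same lemma; but this is precisely the content of Lemma \ref{A0inter(A0+P)=0}, namely $F_{P^\vee} \cap ((P^\vee \setminus \{0\}) + F_{P^\vee}) = \emptyset$. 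Hence $\pr$ restricts to a homeomorphism $F_{P^\vee} \xrightarrow{\sim} \mathcal{F}$.

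Finally I would transport everything across the $W$-equivariant isomorphism $V^*/P^\vee \xrightarrow{\sim} T$ of Lemma \ref{ker(exp)}. Under this map $\mathcal{F} = \pr(F_{P^\vee})$ corresponds to $\exp(F_{P^\vee})$; since the isomorphism is a $W$-equivariant homeomorphism, it carries fundamental domains to fundamental domains, so $\exp(F_{P^\vee})$ is a fundamental domain for $W$ on $T$, and it is homeomorphic to $\mathcal{F} \cong F_{P^\vee}$.

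I do not anticipate a genuine obstacle, as every ingredient is already proved. The only point requiring a moment of care is the identification $Y = P^\vee$ in the adjoint case, together with the observation that it is exactly the $W$-equivariance of the exponential isomorphism that allows the fundamental-domain property (phrased entirely in terms of the $W$-action) to pass from $V^*/P^\vee$ to $T$.
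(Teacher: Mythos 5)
Your proposal is correct and is exactly the argument the paper intends: the corollary is stated without a separate proof precisely because it follows by combining Corollary \ref{funddomforextendedaffineweylgroup}, Lemma \ref{A0inter(A0+P)=0} (which the paper explicitly phrases as verifying the hypothesis of Lemma \ref{fromlatticetotorus} in the adjoint case), and the $W$-equivariant identification $V^*/P^\vee\simeq T$ of Lemma \ref{ker(exp)}, with the identification $Y=Q^\wedge=P^\vee$ for an adjoint group. Your assembly of these pieces, including the remark that $W$-equivariance is what transports the fundamental-domain property to $T$, matches the paper's route.
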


The Lemmae \ref{actionwimonroots} and \ref{actionwionroots} may be used to give a useful characterization of the element $w_i=w^i_0w_0$ from the Proposition \ref{descriptionOmega}.
\begin{prop}\label{characterizationwi}
For any $i\in J$, the element $w_i$ is the unique element $w\in W$ of minimal length such that 
\[\varpi_i^\vee\in ww_0({\mathcal{C}_0}),\]
where $w_0$ is the longest element of $W$.
\end{prop}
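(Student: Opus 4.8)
The plan is to recast the condition $\varpi_i^\vee\in ww_0(\mathcal{C}_0)$ geometrically in terms of Weyl chambers and then to optimize the length. Since $w_0$ is an involution, right multiplication $w\mapsto ww_0$ is a bijection of $W$, and $w_0(\mathcal{C}_0)$ is a fixed chamber, so $ww_0(\mathcal{C}_0)=(ww_0)\mathcal{C}_0$. Because $W$ acts simply transitively on the set of (closed) chambers $w'\mathcal{C}_0$, $w'\in W$, the chamber $ww_0(\mathcal{C}_0)$ ranges over all chambers as $w$ runs through $W$, and the constraint on $w$ will turn into a membership condition in a single coset of a parabolic subgroup.

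First I would pin down which closed chambers contain $\varpi_i^\vee$. As $\left<\varpi_i^\vee,\alpha_j\right>=\delta_{ij}\ge0$ for every $j$, the point $\varpi_i^\vee$ lies in the closed fundamental chamber $\mathcal{C}_0$, and its stabilizer in $W$ is the standard parabolic subgroup generated by the simple reflections fixing it, namely $\left<s_j~;~j\ne i\right>=W_i$. Since $\mathcal{C}_0$ is a \emph{strict} fundamental domain for $W$ (it meets each $W$-orbit in exactly one point), for $u\in W$ one has $\varpi_i^\vee\in u\mathcal{C}_0$ if and only if $u^{-1}\varpi_i^\vee\in\mathcal{C}_0$, hence if and only if $u^{-1}\varpi_i^\vee=\varpi_i^\vee$, i.e. $u\in W_i$. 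By simple transitivity the closed chambers containing $\varpi_i^\vee$ are therefore exactly $\{u\mathcal{C}_0~;~u\in W_i\}$. Consequently $\varpi_i^\vee\in ww_0(\mathcal{C}_0)$ if and only if $ww_0\in W_i$, that is, if and only if $w\in W_iw_0$.

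It then remains to minimize the length $\ell(w)$ over the coset $W_iw_0$. Writing $w=uw_0$ with $u\in W_i$ and using the standard identity $\ell(uw_0)=\ell(w_0)-\ell(u)$ for the longest element $w_0$, minimizing $\ell(w)$ is equivalent to maximizing $\ell(u)$ over $u\in W_i$. The unique element of maximal length in $W_i$ is its longest element $w^i_0$, so the unique minimizer is $w=w^i_0w_0=w_i$, precisely the element of Proposition \ref{descriptionOmega}. One checks directly that it satisfies the constraint, since $w_iw_0=w^i_0\in W_i$ and hence $\varpi_i^\vee\in w^i_0\mathcal{C}_0=w_iw_0(\mathcal{C}_0)$.

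The only genuinely non-formal inputs are the identification of the stabilizer of $\varpi_i^\vee$ with the parabolic $W_i$ and the fact that $\mathcal{C}_0$ meets each orbit once; both are classical. I expect the main (minor) obstacle to be the bookkeeping around the involution $w_0$ and the coset-versus-subgroup distinction: one must keep track that the chamber condition reads $ww_0\in W_i$ (a coset condition on $w$) and that the length identity is applied to $uw_0$ rather than to $w$ directly. As an alternative route, the characterization can be extracted from Lemmae \ref{actionwimonroots} and \ref{actionwionroots}, which describe the action of $w_i^{-1}$ on simple roots, but the chamber-theoretic argument above is shorter and yields the uniqueness at once.
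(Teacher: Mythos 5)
Your proof is correct, and it takes a genuinely different route from the paper's. You argue entirely within the finite Weyl group: strictness of the fundamental domain $\mathcal{C}_0$ plus the classical theorem that the stabilizer of a point of $\mathcal{C}_0$ is the standard parabolic subgroup generated by the simple reflections fixing it convert the condition $\varpi_i^\vee\in ww_0(\mathcal{C}_0)$ into the coset condition $w\in W_iw_0$; the identity $\ell(uw_0)=\ell(w_0)-\ell(u)$ and the uniqueness of the longest element $w^i_0$ of $W_i$ then yield existence and uniqueness of the minimizer $w_i=w^i_0w_0$ in one stroke. The paper instead works in the affine picture: it takes $w$ of minimal length with $w^{-1}(\varpi_i^\vee)\in-\mathcal{C}_0$, proves via three claims (resting on Lemma \ref{diffovA0elementsarelinkedbyW} and alcove geometry) that $\mathrm{t}_{\varpi_i^\vee}w$ stabilizes the alcove $\mathcal{A}_0$ and hence lies in $\Omega$, and then deduces from Lemmae \ref{actionwimonroots} and \ref{actionwionroots} that $ww_i^{-1}$ sends $\Pi$ into $\Phi^+$ and therefore equals $1$. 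Your argument is considerably shorter, relies only on standard finite Coxeter-group facts, and gives slightly more: it identifies the full solution set as exactly the coset $W_iw_0$, so both existence and uniqueness of the minimal-length solution are immediate. The paper's longer proof has the advantage of staying within the machinery it has already developed (the structure of $\Omega$ and its action on the extended Dynkin diagram) and of exhibiting directly that minimality forces $\mathrm{t}_{\varpi_i^\vee}w\in\Omega$, which is precisely the mechanism reused in Remark \ref{extensionofOmegaelementstoP} to extend the construction $\lambda\mapsto u_\lambda=\mathrm{t}_\lambda w_\lambda$ to arbitrary $\lambda\in P^\vee$.
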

\begin{proof}
Let $w\in W$ such that $w^{-1}(\varpi_i^\vee)\in w_0({\mathcal{C}_0})=-{\mathcal{C}_0}$ with $\ell(w)$ minimal with respect to this property. First, we prove that $\mathrm{t}_{\varpi_i^\vee}w\in\Omega$, that is, $w({\mathcal{A}_0})+\varpi_i^\vee={\mathcal{A}_0}$. 

Since ${\mathcal{A}_0}=\conv(\{0\}\cup \{\varpi_j^\vee/n_j\}_{j\in I})$, it is sufficient to prove that for any $j\in I$, we have $w\left(\frac{\varpi_j^\vee}{n_j}\right)+\varpi_i^\vee\in{\mathcal{A}_0}$. Recall that $W$ acts on the set $\{\lambda\in V^*~;~\forall \alpha\in\Phi,~-1\le \left<\lambda,\alpha\right>\le 1\}$. This, together with the fact that $-w^{-1}(\varpi_i^\vee)\in{\mathcal{C}_0}$, implies that $-w^{-1}(\varpi_i^\vee)\in{\mathcal{A}_0}$. Thus, we have $-w^{-1}(\varpi_i^\vee)\in{\mathcal{A}_0}\cap P^\vee$ and so there is some $k\in J$ such that $w^{-1}(\varpi_i^\vee)=-\varpi_k^\vee$ (see \cite[VI, \S 2.2, Proposition 5]{bourbaki456}).

\underline{Claim 1} : For $j\ne i$, we have $w(\varpi_j^\vee)\ne\varpi_j^\vee$.

Suppose the contrary, then 
\[w^{-1}\left(\frac{\varpi_j^\vee}{n_j}+\varpi_i^\vee\right)=\frac{\varpi_j^\vee}{n_j}-\varpi_k^\vee\]
and by the Lemma \ref{diffovA0elementsarelinkedbyW}, there exists some $v\in W$ such that $vw^{-1}\left(\frac{\varpi_j^\vee}{n_j}+\varpi_i^\vee\right)\in{\mathcal{A}_0}$ and thus
\[1\ge\left<vw^{-1}\left(\frac{\varpi_j^\vee}{n_j}+\varpi_i^\vee\right),\underbrace{vw^{-1}(\alpha_0)}_{\in\Phi}\right>=\left<\frac{\varpi_j^\vee}{n_j}+\varpi_i^\vee,\alpha_0\right>=2,\]
a contradiction. 

\underline{Claim 2} : For $j\ne i$, we have $\left<\varpi_j^\vee,w(\alpha_k)\right>\ne0$.

Indeed, for each $k'\ne k$, we have $(ws_{k'})^{-1}(\varpi_i^\vee)=-s_{k'}(\varpi_k^\vee)=-\varpi_k^\vee\in -{\mathcal{C}_0}$ and because $\ell(w)$ is minimal, this implies that $\ell(ws_{k'})=\ell(w)+1$ and thus $w(\alpha_{k'})\in\Phi^+$. This yields $\left<w^{-1}(\varpi_j^\vee),\alpha_{k'}\right>=\left<\varpi_j^\vee,w(\alpha_{k'})\right>\ge0$ for each $k'\ne k$. If $\left<\varpi_j^\vee,w(\alpha_k)\right>=0$, then $w^{-1}(\varpi_j^\vee)\in{\mathcal{C}_0}$ and so $w^{-1}\left(\frac{\varpi_j^\vee}{n_j}\right)\in{\mathcal{A}_0}\cap P^\vee$ and we may choose $k'\ne k$ in $J$ such that $w^{-1}\left(\frac{\varpi_j^\vee}{n_j}\right)=\varpi_{k'}^\vee$. Taking the pairing of this equation against $w^{-1}(\alpha_j)$ yields $1/n_j\in\Z$, i.e. $n_j=1$ and $w^{-1}(\varpi_j^\vee)=\varpi_{k'}^\vee$. Now, the action of $W$ on $P^\vee/Q^\vee$ being trivial, we have that $\varpi_j^\vee \equiv \varpi_{k'}^\vee \pmod {Q^\vee}$ and because $n_j=n_{k'}=1$, this implies $k'=j$ and thus $w^{-1}(\varpi_j^\vee)=\varpi_j^\vee$. The Claim 1 forbids this, so our assumption that $\left<\varpi_j^\vee,w(\alpha_k)\right>=0$ was wrong.

\underline{Claim 3} : We have $w(\alpha_k)=-\alpha_0$.

Indeed, we have seen that $w(\alpha_{k'})\in\Phi^+$ for $k'\ne k$ and because $w\ne 1$, this yields $w(\alpha_k)\in\Phi^-$. Now, for $j\ne i$, we have
\[0\ge \left<\varpi_j^\vee,w(\alpha_k)\right>=n_j\left<w^{-1}\left(\frac{\varpi_j^\vee}{n_j}\right),\alpha_k\right>\]
and $\left<w^{-1}(\varpi_j^\vee),\alpha_k\right>\ne0$ by the Claim 2, so $-1\le \left<w^{-1}\left(\frac{\varpi_j^\vee}{n_j}\right),\alpha_k\right><0$ and it is an integer, so it equals $-1$ and thus 
\[\forall j \ne i,~\left<\varpi_j^\vee,w(\alpha_k)\right>=-n_j\]
and we already have $\left<\varpi_i,w(\alpha_k)\right>=\left<w^{-1}(\varpi_i^\vee),\alpha_k\right>=-1=-n_i$. This gives that $w(\alpha_k)=-\alpha_0$, as claimed.

Let $j\in I$. For $j'\ne i$, we compute
\[\left<w\left(\frac{\varpi_j^\vee}{n_j}\right)+\varpi_i^\vee,\alpha_{j'}\right>=\left<w\left(\frac{\varpi_j^\vee}{n_j}\right),\alpha_{j'}\right>\]
and this is non-negative because $w^{-1}(\alpha_{j'})\in\Phi^+$. This holds because we must have $\ell(w^{-1}s_{j'})=\ell(s_{j'}w)=\ell(w)+1$ since $(s_{j'}w)^{-1}(\varpi_i^\vee)=w^{-1}(\varpi_i^\vee)\in -{\mathcal{C}_0}$ and $\ell(w)$ is minimal with respect to this property. On the other hand, we have
\[\left<w\left(\frac{\varpi_j^\vee}{n_j}\right)+\varpi_i^\vee,\alpha_i\right>=1+\underbrace{\left<w\left(\frac{\varpi_j^\vee}{n_j}\right),\alpha_i\right>}_{\ge -1}\ge 0.\]
This proves that
\[w\left(\frac{\varpi_j^\vee}{n_j}\right)+\varpi_i^\vee\in{\mathcal{C}_0}.\]
Moreover, using the Claim 3 we get
\[\left<w\left(\frac{\varpi_j^\vee}{n_j}\right)+\varpi_i^\vee,\alpha_0\right>=1+\left<\frac{\varpi_j^\vee}{n_j},w^{-1}(\alpha_0)\right>=1-\left<\frac{\varpi_j^\vee}{n_j},\alpha_k\right>\le1\]
and thus $w\left(\frac{\varpi_j^\vee}{n_j}\right)+\varpi_i^\vee\in{\mathcal{A}_0}$, and $\mathrm{t}_{\varpi_i^\vee}w\in\Omega$ as required.

Now, since $\mathrm{t}_{\varpi_i^\vee}w\in\Omega$, the Lemmae \ref{actionwimonroots} and \ref{actionwionroots} are valid for $w$ and let $v:=ww_i^{-1}$ where $w_i=w_0^iw_0$ is the element from the Proposition \ref{descriptionOmega}. By these Lemmae, we have $v(\alpha_i)=-w(\alpha_0)=\alpha_i$ and, if $j\ne i$, we have $v(\alpha_j)\in\Pi$ except in the case where $v(\alpha_j)=-\alpha_0$, so $w_i^{-1}(\alpha_j)=-w^{-1}(\alpha_0)=\alpha_k$ with $k$ such that $-w^{-1}(\varpi_i^\vee)=\varpi_k^\vee=-w_i^{-1}(\varpi_i^\vee)$. In this case, we have $\alpha_j=w_i(\alpha_k)=-\alpha_0$, which is excluded. Hence we get $v(\Pi)\subset\Phi^+$ and $v=1$.%, i.e. $w=w_i$.
\end{proof}

\begin{rem}\label{extensionofOmegaelementstoP}
The above characterization may also be used to generalize the construction of the $\omega_i$'s to $P^\vee$. More precisely, for any $\lambda\in P^\vee$, consider the element $w_\lambda$ with minimal length among those $w\in W$ such that $\lambda\in ww_0({\mathcal{C}_0})$ and define $u_\lambda:=\mathrm{t}_\lambda w_\lambda\in \widehat{W_\mathrm{a}}$. Note that for $i\in J$, we have $\omega_i=u_{\varpi_i^\vee}\in\Omega$. The assignment $\lambda\mapsto u_\lambda$ results in a well-defined map
\[P^\vee \stackrel{\tiny{u}}\longto \widehat{W_\mathrm{a}}.\]
\end{rem}

We finish this section by giving the vertices of the polytope $F_{P^\vee}$. 
\begin{prop}\label{verticesofFP}
Let $\mathcal{B}_m$ be the set of all isobarycenters of points in $\{0\}\cup\{\varpi_j^\vee\}_{j\in J}$ with a non-zero coefficient with respect to the origin. In other words,
\[\mathcal{B}_m:=\left\{\frac{1}{|J'|+1}\sum_{j\in J'}\varpi_j^\vee~;~J'\subseteq J\right\}\]
\[=\{0\}\cup\left\{\frac{\varpi_{i_1}^\vee+\cdots+\varpi_{i_k}^\vee}{k+1}~;~1\le k \le |J|~~\text{and}~~i_j\in J,~\forall j\right\}.\]
Then the vertices of the polytope $F_{P^\vee}$ are given by
\[\vertices(F_{P^\vee})=\mathcal{B}_m\cup\left\{\frac{\varpi_i^\vee}{n_i}\right\}_{i\in I \setminus J}.\]
%In particular, if $|J|>1$ (i.e. if $\Phi$ is of type $A_{n>1}$, $D_n$ or $E_6$), then $|\mathrm{vert}(F_{P^\vee})|>r+1$, so the cellular decomposition from the Corollary \ref{funddomforKadjointcase} is no longer simplicial.
\end{prop}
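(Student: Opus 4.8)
The plan is to introduce the barycentric coordinates attached to the simplex $\mathcal{A}_0$ and then read off the vertices by a short linear-programming argument. For $\lambda\in V^*$ put $x_i:=\langle\lambda,\alpha_i\rangle$ for $i\in I$ and $x_0:=1-\langle\lambda,\alpha_0\rangle=1-\sum_{i\in I}n_ix_i$. Writing $\lambda=\sum_{i\in I}x_i\varpi_i^\vee$, one sees that $(x_0,n_1x_1,\dots,n_rx_r)$ are precisely the barycentric coordinates of $\lambda$ relative to the vertices $(0,\varpi_1^\vee/n_1,\dots,\varpi_r^\vee/n_r)$ of $\mathcal{A}_0$: they are nonnegative on $\mathcal{A}_0$ and sum to $1$. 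Since $\langle\lambda,\alpha_i+\alpha_0\rangle=x_i+(1-x_0)$, the defining inequalities of $F_{P^\vee}$ become
\[x_i\ge0\ (i\in I),\qquad x_0\ge0,\qquad x_j\le x_0\ (j\in J),\]
and it is on this normal form that I would enumerate the vertices.

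First I would check that each listed point is a vertex. For $i\in I\setminus J$ the point $\varpi_i^\vee/n_i$ is a vertex of $\mathcal{A}_0$ and satisfies $\langle\varpi_i^\vee/n_i,\alpha_j+\alpha_0\rangle=1$ for every $j\in J$ (because $i\neq j$), hence lies in $F_{P^\vee}$; an extreme point of $\mathcal{A}_0$ that belongs to the convex subset $F_{P^\vee}$ is automatically a vertex of $F_{P^\vee}$. For $J'\subseteq J$ the isobarycenter $b_{J'}:=\frac{1}{|J'|+1}\sum_{j\in J'}\varpi_j^\vee$ has coordinates $x_j=x_0=\frac{1}{|J'|+1}$ for $j\in J'$ and $x_i=0$ otherwise; the $|J'|$ relations $x_j=x_0$ $(j\in J')$ together with the $r-|J'|$ relations $x_i=0$ $(i\in I\setminus J')$ are then tight, they are readily seen to be independent (the resulting square matrix is of the form $\mathrm{Id}+\mathbf 1\mathbf 1^{\mathsf T}$, hence invertible), and feasibility is clear, so $b_{J'}$ is a vertex.

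For the converse I would take an arbitrary vertex $v$ and distinguish two cases according to $x_0$. If $x_0=0$, then $0\le x_j\le x_0$ forces $x_j=0$ for all $j\in J$, so $v$ lies in the subsimplex $\conv\{\varpi_i^\vee/n_i\}_{i\in I\setminus J}=F_{P^\vee}\cap\{x_0=0\}$, and being a vertex it must be one of the $\varpi_i^\vee/n_i$ with $i\in I\setminus J$. The decisive case is $x_0>0$, which I expect to be the main obstacle: here the only active constraints are $\{x_i=0\}_{i\in T_A}$ and $\{x_j=x_0\}_{j\in T_C}$ for some $T_A\subseteq I$ and $T_C\subseteq J\setminus T_A$. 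After setting $x_i=0$ for $i\in T_A$, these equalities form a linear system in the $m:=|I\setminus T_A|$ remaining coordinates consisting of $|T_C|$ equations; since $v$ is a vertex this system must have $v$ as its unique solution, so its rank is $m$, and as $T_C\subseteq I\setminus T_A$ forces $|T_C|\le m$, we conclude $|T_C|=m$ and $T_C=I\setminus T_A$. Because $T_C\subseteq J$, this gives $I\setminus J\subseteq T_A$, i.e.\ every non-minuscule coordinate vanishes; writing $J':=I\setminus T_A\subseteq J$ and solving $x_j=x_0=1-|J'|x_0$ yields $x_0=\frac{1}{|J'|+1}$ and $v=b_{J'}$. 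Combining the two cases gives exactly $\vertices(F_{P^\vee})=\mathcal{B}_m\cup\{\varpi_i^\vee/n_i\}_{i\in I\setminus J}$, as claimed.
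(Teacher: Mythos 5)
Your proof is correct. It rests on the same underlying principle as the paper's argument --- reading off the vertices of $F_{P^\vee}$ from its half-space description by examining which defining inequalities are tight --- but the execution is genuinely different at each step. The paper works directly with the hyperplanes $H_i=\{\lambda~;~\left<\lambda,\alpha_i\right>=0\}$ and $H_j^0=\{\lambda~;~\left<\lambda,\alpha_0+\alpha_j\right>=1\}$: for a vertex $v$ it fixes subsets $I_v\subseteq I$, $J_v\subseteq J$ with $|I_v|+|J_v|=r$ whose hyperplanes cut out exactly $\{v\}$, splits on whether $I_v\cap J_v$ is empty, and in each case exhibits an explicit point of the candidate set $\mathcal{B}_m\cup\{\varpi_i^\vee/n_i\}_{i\in I\setminus J}$ lying in the same intersection, which forces $v$ to equal that point; for the reverse inclusion it only asserts that each candidate lies on at least $r$ of these hyperplanes. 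You instead pass to the barycentric normal form $x_i\ge0$, $x_0\ge0$, $x_j\le x_0$, split on $x_0=0$ versus $x_0>0$, and use a rank count ($|T_C|\le m$ because $T_C\subseteq I\setminus T_A$, and $|T_C|\ge m$ by uniqueness of the solution at a vertex) to force the active set to be exactly $T_C=I\setminus T_A\subseteq J$, after which solving the system gives the isobarycenter. Your treatment of the inclusion of the candidate set into $\vertices(F_{P^\vee})$ is in fact more complete than the paper's: inheritance of extremality from $\mathcal{A}_0$ handles the points $\varpi_i^\vee/n_i$, and the explicit independence check (invertibility of $\mathrm{Id}+\mathbf{1}\mathbf{1}^{\mathsf{T}}$) handles the barycenters, whereas the paper's ``lies in at least $r$ hyperplanes'' leaves the independence of those hyperplanes implicit. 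Finally, your normal form makes the argument uniform in $J$: the case $J=\emptyset$, which the paper disposes of by citing Bourbaki's description of the alcove, is covered by the same computation. What the paper's formulation buys in exchange is that it stays within the root-system pairing language and meshes directly with the facial analysis used elsewhere (e.g.\ in Proposition \ref{intersectionsalongfacets}).
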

\begin{proof}
Denote $\mathcal{V}:=\mathcal{B}_m\cup\{\varpi_i^\vee/n_i\}_{i\in I\setminus J}$. If $J=\emptyset$, then $\mathcal{B}_m=\{0\}$ and the statement is just the last corollary from \cite[VI,\S 2.2]{bourbaki456}. So we may assume that $J$ is non-empty. Denote
\[\forall i\in I,~H_i:=\{\lambda\in V^*~;~\left<\lambda,\alpha_i\right>=0\}~~\text{and}~~H_0:=\{\lambda\in V^*~;~\left<\lambda,\alpha_0\right>=1\}.\]
We also introduce the following affine hyperplanes
\[\forall j\in J,~H_j^0:=\{\lambda\in V^*~;~\left<\lambda,\alpha_0+\alpha_j\right>=1\}.\]
By construction, any facet of $F_{P^\vee}$ is of the form $F_{P^\vee}\cap H_i$ for some $0\le i \le r$ or $F_{P^\vee}\cap H_j^0$ for some $j\in J$. Therefore, any face $f$ of $F_{P^\vee}$ is of the form
\[f=F_{P^\vee}\cap\left(\bigcap_{i\in I_f}H_i\cap\bigcap_{j\in J_f}H_j^0\right)\]
for some subsets $I_f\subseteq I$ and $J_f\subseteq J$ such that $|I_f|+|J_f|=\codim_{F_{P^\vee}}(f)=r-\dim(f)$. In particular
\[\forall v\in\vertices(F_{P^\vee}),~\exists I_v\subseteq I,~\exists J_v\subseteq J~;~|I_v|+|J_v|=r~~\text{and}~~\{v\}=F_{P^\vee}\cap\left(\bigcap_{i\in I_v}H_i\cap\bigcap_{j\in J_v}H_j^0\right).\]
It is straightforward to check that any point of $\mathcal{V}$ is in at least $r$ hyperplanes among $\{H_i,H_j^0~;~i\in I,~j\in J\}$, so $\mathcal{V}\subseteq\vertices(F_{P^\vee})$.

Conversely, let $v\in\vertices(F_{P^\vee})$ and take $I_v$ and $J_v$ as above. 

Assume first that $I_v\cap J_v\ne\emptyset$ and let $k\in I_v\cap J_v$. As $\left<v,\alpha_0+\alpha_k\right>=1$ and $\left<v,\alpha_k\right>=0$, we get $\left<v,\alpha_0\right>=1$. Since $v\in F_{P^\vee}$, this implies that $\left<v,\alpha_j\right>=0$ for any $j\in J$, so $J_v=J$. If $J=I$, then $v=0\in\mathcal{V}$. Otherwise, there is some $\ell\in I\setminus J$ with $\left<v,\alpha_\ell\right>>0$, so $\ell\notin I_v$ and we have
\[\left\{\frac{\varpi_\ell^\vee}{n_\ell}\right\}=F_{P^\vee}\cap\left(\bigcap_{\ell\ne i\in I}H_i\cap\bigcap_{j\in J}H_j^0\right)\subseteq F_{P^\vee}\cap\left(\bigcap_{i\in I_v}H_i\cap\bigcap_{j\in J_v}H_j^0\right)=\{v\},\]
thus $v=\varpi_\ell^\vee/n_\ell\in\mathcal{V}$. So if $I_v\cap J_v$ is non-empty, then $v\in\mathcal{V}$.

Now, if $I_v\cap J_v=\emptyset$ we have
\[\left\{\frac{1}{|J_v|+1}\sum_{j\in J_v}\varpi_j^\vee\right\}\subseteq F_{P^\vee}\cap\left(\bigcap_{i\in I_v}H_i\cap\bigcap_{j\in J_v}H_j^0\right)=\{v\}\]
and thus $v=\frac{1}{|J_v|+1}\sum_{j\in J_v}\varpi_j^\vee\in\mathcal{V}$.
\end{proof}

\begin{exemple}\label{figures}
The Figures \ref{fig:A2}, \ref{fig:B2} and \ref{fig:B3} display the fundamental domain $F_{P^\vee}$ inside the fundamental alcove, itself inside the fundamental chamber for the respective root data of type $A_2$, $B_2$ and $B_3$.

\begin{figure}[h!]
\begin{tikzpicture}[scale=1.8,rotate=45]
\iffalse
  \coordinate (z) at (0,0);
  \coordinate (a) at (1,-1);
  \coordinate (b) at (x,y);
  \coordinate (ab) at (1+x,-1+y);
  \coordinate (ma) at (-1,1);
  \coordinate (mb) at (-x,-y);
  \coordinate (mab) at (-1-x,1-y);
  
  \coordinate (la) at (2/3+1/3*x,-2/3+1/3*y);
  \coordinate (lb) at (1/3+2/3*x,-1/3+2/3*y);
  \fi
  
  \coordinate (z) at (0,0);
  \coordinate (a) at (1,-1);
  \coordinate (b) at (0.3660254040,1.366025404);
  \coordinate (ab) at (1+0.3660254040,-1+1.366025404);
  \coordinate (ma) at (-1,1);
  \coordinate (mb) at (-0.3660254040,-1.366025404);
  \coordinate (mab) at (-1-0.3660254040,1-1.366025404);
  
  \coordinate (la) at (2/3+1/3*0.3660254040,-2/3+1/3*1.366025404);
  \coordinate (lb) at (1/3+2/3*0.3660254040,-1/3+2/3*1.366025404);
  
  \coordinate (lad) at (1/3+1/6*0.3660254040,-1/3+1/6*1.3660254040);
  \coordinate (lbd) at (1/6+1/3*0.3660254040,-1/6+1/3*1.3660254040);
  \coordinate (barlab) at (1/3+1/3*0.3660254040,-1/3+1/3*1.3660254040);
  
  \fill[fill=black] (a) circle (1pt);
  \fill[fill=black] (b) circle (1pt);
  \fill[fill=black] (ab) circle (1pt);
  \fill[fill=black] (z) circle (1pt);
  \fill[fill=black] (ma) circle (1pt);
  \fill[fill=black] (mb) circle (1pt);
  \fill[fill=black] (mab) circle (1pt);
  
  \fill[fill=red] (la) circle (1pt);
  \fill[fill=red] (lb) circle (1pt);
  
  \draw (z)--(a);
  \draw (z)--(b);
  \draw (z)--(ab);
  \draw[dashed] (z)--(ma);
  \draw[dashed] (z)--(mb);
  \draw[dashed] (z)--(mab);
  
  \draw (a) node[right]{$\alpha$};
  \draw (b) node[left]{$\beta$};
  \draw (ab) node[above]{$\alpha_0=\alpha+\beta$};
  \draw (la) node[right]{$\varpi_\alpha^\vee$};
  \draw (lb) node[left]{$\varpi_\beta^\vee$};
  
  \fill[fill=blue,opacity=0.6] (z)--(la)--(lb);
  \fill[fill=green,opacity=0.8] (z)--(lad)--(barlab)--(lbd);
  
  \draw (z)--(la);
  \draw (z)--(lb);
  \draw (la)--(lb);
  \draw (la)--(lbd);
  \draw (lb)--(lad);
  \draw (z)--(ab);
  
  \coordinate (x) at (2+0.3660254040,-2+1.3660254040);
  \coordinate (y) at (1+2*0.3660254040,-1+2*1.3660254040);
  
  \fill[fill=gray,opacity=0.35] (x)--(y)--(z);
  \draw[dotted] (z)--(x);
  \draw[dotted] (z)--(y);
\end{tikzpicture}
\caption{The fundamental domain $F_{P^\vee}$ (in green) inside ${\mathcal{A}_0}$ (in blue) inside the Weyl chamber ${\mathcal{C}_0}$ (in gray) in type $A_2$.}\label{fig:A2}
\end{figure}

\begin{figure}[h!]
\begin{tikzpicture}[x={(-1cm,0cm)},y={(0cm,1cm)},scale=1.8,rotate=-90]
  \coordinate (z) at (0,0);
  \coordinate (b) at (1,-1);
  \coordinate (a) at (0,1);
  \coordinate (ab) at (1,0);
  \coordinate (dab) at (1,1);
  \coordinate (ma) at (0,-1);
  \coordinate (mb) at (-1,1);
  \coordinate (mab) at (-1,0);
  \coordinate (mdab) at (-1,-1);
  
  \coordinate (la) at (1/2,1/2);
  \coordinate (lb) at (1,0);
  \coordinate (lac) at (1,1);
  \coordinate (ldb) at (1/2,0);
  
  \fill[fill=black] (z) circle (1pt);
  \fill[fill=black] (a) circle (1pt);
  \fill[fill=black] (b) circle (1pt);
  \fill[fill=black] (ab) circle (1pt);
  \fill[fill=black] (dab) circle (1pt);
  \fill[fill=black] (ma) circle (1pt);
  \fill[fill=black] (mb) circle (1pt);
  \fill[fill=black] (mab) circle (1pt);
  \fill[fill=black] (mdab) circle (1pt);
  
  \fill[fill=red] (lac) circle (1pt);
  \fill[fill=red] (lb) circle (1pt);
  
  \draw (a) node[right]{$\alpha$};
  \draw (b) node[left]{$\beta$};
  \draw (ab) node[above]{$\alpha+\beta=\varpi_\beta^\vee$};
  \draw (dab) node[right]{$\alpha_0=2\alpha+\beta=\varpi_\alpha^\vee$};
  %\draw (lac) node[above]{$\varpi_\alpha^\vee$};
  %\draw (lb) node[above]{$\varpi_\beta^\vee$};
  
  \draw (z)--(a);
  \draw (z)--(b);
  \draw (z)--(ab);
  \draw (z)--(dab);
  \draw (la)--(ldb);
  \draw (la)--(lb);
  \draw[dashed] (z)--(ma);
  \draw[dashed] (z)--(mb);
  \draw[dashed] (z)--(mab);
  \draw[dashed] (z)--(mdab);
  
  \fill[fill=blue,opacity=0.6] (z)--(la)--(lb);
  \fill[fill=green,opacity=0.8] (z)--(la)--(ldb);
  
  \coordinate (x) at (1.7,1.7);
  \coordinate (y) at (1.7,0);
  
  \fill[fill=gray,opacity=0.35] (x)--(y)--(z);
  \draw[dotted] (z)--(x);
  \draw[dotted] (z)--(y);
\end{tikzpicture}
\caption{The fundamental domain $F_{P^\vee}$ (in green) inside ${\mathcal{A}_0}$ (in blue) inside the Weyl chamber ${\mathcal{C}_0}$ (in gray) in type $B_2=C_2$.}\label{fig:B2}
\end{figure}

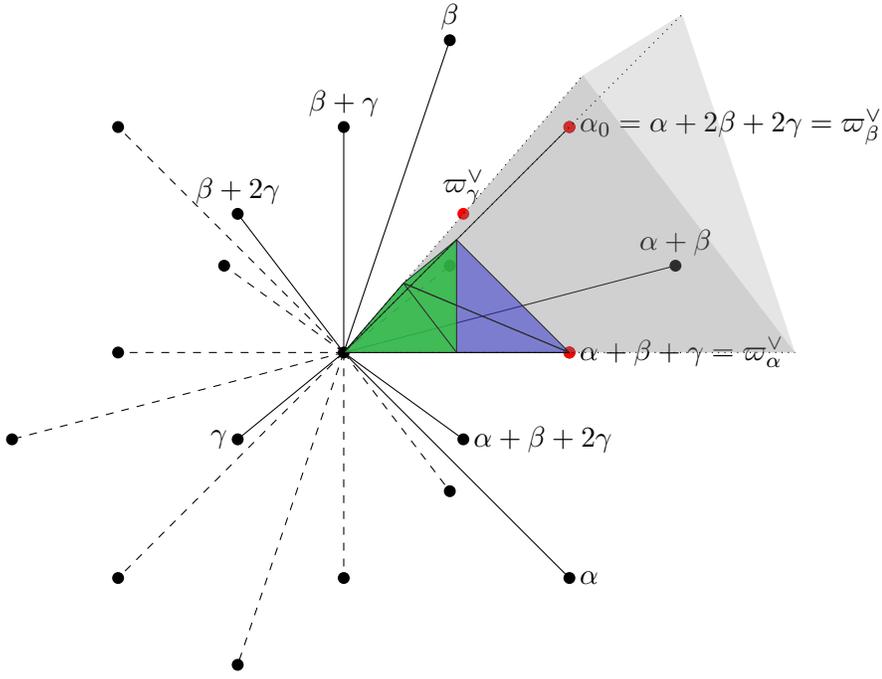
\begin{figure}[h!]
\begin{tikzpicture}[x={(1cm,0cm)},y={(0cm,1cm)},z={(-4.7mm,-3.85mm)},scale=3]
  \coordinate (z) at (0,0,0);
  \coordinate (a) at (1,-1,0);
  \coordinate (b) at (0,1,-1);
  \coordinate (c) at (0,0,1);
  \coordinate (ab) at (1,0,-1);
  \coordinate (dadbc) at (1,1,0);
  \coordinate (abdc) at (1,0,1);
  \coordinate (bdc) at (0,1,1);
  \coordinate (abc) at (1,0,0);
  \coordinate (bc) at (0,1,0);
  \coordinate (ma) at (-1,1,0);
  \coordinate (mb) at (0,-1,1);
  \coordinate (mc) at (0,0,-1);
  \coordinate (mab) at (-1,0,1);
  \coordinate (mdadbc) at (-1,-1,0);
  \coordinate (mabdc) at (-1,0,-1);
  \coordinate (mbdc) at (0,-1,-1);
  \coordinate (mabc) at (-1,0,0);
  \coordinate (mbc) at (0,-1,0);
  
  \fill[fill=black] (z) circle (0.75pt);
  \fill[fill=black] (a) circle (0.75pt);
  \fill[fill=black] (b) circle (0.75pt);
  \fill[fill=black] (c) circle (0.75pt);
  \fill[fill=black] (ab) circle (0.75pt);
  \fill[fill=black] (dadbc) circle (0.75pt);
  \fill[fill=black] (abdc) circle (0.75pt);
  \fill[fill=black] (bdc) circle (0.75pt);
  \fill[fill=black] (abc) circle (0.75pt);
  \fill[fill=black] (bc) circle (0.75pt);
  \fill[fill=black] (ma) circle (0.75pt);
  \fill[fill=black] (mb) circle (0.75pt);
  \fill[fill=black] (mc) circle (0.75pt);
  \fill[fill=black] (mab) circle (0.75pt);
  \fill[fill=black] (mdadbc) circle (0.75pt);
  \fill[fill=black] (mabdc) circle (0.75pt);
  \fill[fill=black] (mbdc) circle (0.75pt);
  \fill[fill=black] (mabc) circle (0.75pt);
  \fill[fill=black] (mbc) circle (0.75pt);
  
  \draw (z)--(a);
  \draw (z)--(b);
  \draw (z)--(c);
  \draw (z)--(ab);
  \draw (z)--(dadbc);
  \draw (z)--(abdc);
  \draw (z)--(bdc);
  \draw (z)--(abc);
  \draw (z)--(bc);
  \draw[dashed] (z)--(ma);
  \draw[dashed] (z)--(mb);
  \draw[dashed] (z)--(mc);
  \draw[dashed] (z)--(mab);
  \draw[dashed] (z)--(mdadbc);
  \draw[dashed] (z)--(mabdc);
  \draw[dashed] (z)--(mbdc);
  \draw[dashed] (z)--(mabc);
  \draw[dashed] (z)--(mbc);
  
  \draw (a) node[right]{$\alpha$};
  \draw (b) node[above]{$\beta$};
  \draw (c) node[left]{$\gamma$};
  \draw (ab) node[above]{$\alpha+\beta$};
  \draw (abdc) node[right]{$\alpha+\beta+2\gamma$};
  \draw (bdc) node[above]{$\beta+2\gamma$};
  \draw (abc) node[right]{$\alpha+\beta+\gamma=\varpi_\alpha^\vee$};
  \draw (bc) node[above]{$\beta+\gamma$};
  \draw (dadbc) node[right]{$\alpha_0=\alpha+2\beta+2\gamma=\varpi_\beta^\vee$};
  
  \coordinate (la) at (1,0,0);
  \coordinate (lb) at (1,1,0);
  \coordinate (lc) at (1,1,1);
  \coordinate (lad) at (1/2,0,0);
  \coordinate (lbd) at (1/2,1/2,0);
  \coordinate (lcd) at (1/2,1/2,1/2);
  
  \fill[fill=red] (la) circle (0.75pt);
  \fill[fill=red] (lb) circle (0.75pt);
  \fill[fill=red] (lc) circle (0.75pt);
  
  \draw (lc) node[above]{$\varpi_\gamma^\vee$};
  
  \fill[fill=blue,opacity=0.3] (z)--(la)--(lbd)--(z);
  \fill[fill=blue,opacity=0.3] (z)--(la)--(lcd)--(z);
  \fill[fill=blue,opacity=0.3] (z)--(lbd)--(lcd)--(z);
  \fill[fill=blue,opacity=0.3] (la)--(lbd)--(lcd);
  
  \fill[fill=green,opacity=0.5] (z)--(lad)--(lbd)--(z);
  \fill[fill=green,opacity=0.5] (z)--(lad)--(lcd)--(z);
  \fill[fill=green,opacity=0.5] (z)--(lbd)--(lcd)--(z);
  \fill[fill=green,opacity=0.5] (lad)--(lbd)--(lcd);
  
  \draw (z)--(lcd);
  \draw (z)--(lbd);
  %\draw (z)--(lad);
  \draw (z)--(la);
  \draw (la)--(lbd);
  \draw (lcd)--(la);
  \draw (lad)--(lbd);
  \draw (lbd)--(lcd);
  \draw (lcd)--(lad);
  \draw (la)--(lcd);

  \coordinate (x) at (2,2,2);
  \coordinate (y) at (1.5,1.5,0);
  \coordinate (t) at (2,0,0);
  
  \fill[fill=gray,opacity=0.2] (z)--(x)--(y);
  \fill[fill=gray,opacity=0.2] (z)--(y)--(t);
  \fill[fill=gray,opacity=0.2] (z)--(t)--(x);
  \draw[dotted] (z)--(x);
  \draw[dotted] (z)--(y);
  \draw[dotted] (z)--(t);
\end{tikzpicture}
\caption{The fundamental domain $F_{P^\vee}$ (in green) inside the fundamental alcove ${\mathcal{A}_0}$ (in blue) in type $B_3$.}\label{fig:B3}
\end{figure}
\end{exemple}

\subsection{The general case $Q^\vee\subseteq Y\subseteq P^\vee$}\label{subsec:general}
\hfill

We now have to see what happens if the $W$-lattice $Y$ is such that $Q^\vee \subsetneq Y \subsetneq P^\vee$. To simplify notations of this section, we identify a lattice $L\subset V^*$ with its translation group $\mathrm{t}(L)\subset \mathrm{Aff}(V^*)$. We shall give a fundamental domain for the \emph{intermediate affine Weyl group} $W_Y:=\mathrm{t}(Y)\rtimes W$ and prove that the quotient map is injective on it. It will be described as a \emph{polytopal complex}. In this paper, by a \emph{polytopal complex} we mean a finite union of closed convex polytopes, each one of which intersecting any other one along a common facet. For a more general definition and treatment of polytopal complexes, we refer the reader to \cite[Chapter 5, \S 5.1]{ziegler-poly}.

First, we shall identify $Y$ with a subgroup of $\Omega$. In fact, there is a correspondence between $W$-lattices $Q^\vee\subseteq\Lambda\subseteq P^\vee$ and the subgroups of $\Omega$. In order to state this correspondence properly, we temporarily drop the letter $Y$ and we work in the root system $\Phi$ only.

\begin{prop}\label{identlatticessubgroupsofOmega}
Recall that $\widehat{W_\mathrm{a}}\simeq W_\mathrm{a}\rtimes\Omega$ and denote by 
\[\pi : \widehat{W_\mathrm{a}}\twoheadrightarrow \Omega\]
the natural projection. Given a $W$-lattice $Q^\vee\subset\Lambda\subset P^\vee$, we define a subgroup $W_\Lambda:=\Lambda\rtimes W\le\widehat{W_\mathrm{a}}$. Then we have a bijective correspondence
\[\begin{array}{ccc}
\left\{\Lambda~;~Q^\vee\subseteq\Lambda\subseteq P^\vee~\text{is a } W\text{-lattice}\right\} & \stackrel{\tiny{\mathrm{1-1}}}\longleftrightarrow & \left\{H\le \Omega\right\} \\
\Lambda & \longmapsto & \Omega_\Lambda:=\pi(W_\Lambda) \\ \pi^{-1}(H)\cap P^\vee=:\Lambda(H) & \longmapsfrom & H \end{array}\]
Moreover, for a $W$-lattice $Q^\vee \subset \Lambda\subset P^\vee$, we have
\[[\Omega:\Omega_\Lambda]=[P^\vee:\Lambda],\]
or, equivalently
\[|\Omega_\Lambda|=[\Lambda:Q^\vee].\]
Finally, we have a decomposition
\[W_\Lambda\simeq W_\mathrm{a}\rtimes \Omega_\Lambda.\]
\end{prop}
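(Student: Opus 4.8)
The plan is to build everything on the projection $\pi : \widehat{W_\mathrm{a}}\twoheadrightarrow\Omega$, whose kernel is exactly $W_\mathrm{a}$, together with the splitting $\widehat{W_\mathrm{a}}\simeq W_\mathrm{a}\rtimes\Omega$ and the identification $\Omega\simeq P^\vee/Q^\vee$ already recorded in \S\ref{subsec::adjoint}. The first step is to make $\pi$ explicit: since $W\subseteq W_\mathrm{a}=\ker\pi$ and $\mathrm{t}(Q^\vee)\subseteq W_\mathrm{a}$, any element $\mathrm{t}_v w\in\widehat{W_\mathrm{a}}$ (with $v\in P^\vee$, $w\in W$) satisfies $\pi(\mathrm{t}_v w)=[v]\in P^\vee/Q^\vee$. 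In particular $\pi(W_\Lambda)=\{[v]~;~v\in\Lambda\}=\Lambda/Q^\vee$, which realizes $\Omega_\Lambda$ concretely inside $P^\vee/Q^\vee\simeq\Omega$.

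The core of the argument is the correspondence theorem applied to $\pi$: subgroups $W_\mathrm{a}\subseteq G\subseteq\widehat{W_\mathrm{a}}$ correspond bijectively to subgroups $H\le\Omega$ via $G\mapsto\pi(G)$ and $H\mapsto\pi^{-1}(H)$. To translate this into the language of lattices, I would show that the intermediate subgroups are exactly the groups $W_\Lambda=\mathrm{t}(\Lambda)\rtimes W$. Indeed, given such a $G$, set $\mathrm{t}(\Lambda):=G\cap\mathrm{t}(P^\vee)$; since $W_\mathrm{a}\cap\mathrm{t}(P^\vee)=\mathrm{t}(Q^\vee)$ one gets $Q^\vee\subseteq\Lambda\subseteq P^\vee$, and since $W\subseteq W_\mathrm{a}\subseteq G$ normalizes $\mathrm{t}(P^\vee)$ via $w\mathrm{t}_v w^{-1}=\mathrm{t}_{wv}$, the lattice $\Lambda$ is automatically $W$-stable (this is just the triviality of the $W$-action on $P^\vee/Q^\vee$). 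Writing any $g=\mathrm{t}_v w\in G$ and observing that $\mathrm{t}_v=gw^{-1}\in G\cap\mathrm{t}(P^\vee)$ forces $v\in\Lambda$, whence $G=W_\Lambda$. Thus $\Lambda\mapsto W_\Lambda$ is a bijection between intermediate lattices and intermediate subgroups, and composing with the correspondence theorem yields the asserted bijection. The two explicit maps in the statement are then seen to be mutually inverse: $\pi^{-1}(\Omega_\Lambda)=W_\Lambda$ (valid because $\ker\pi=W_\mathrm{a}\subseteq W_\Lambda$), so $\Lambda(\Omega_\Lambda)=W_\Lambda\cap\mathrm{t}(P^\vee)=\mathrm{t}(\Lambda)$ recovers $\Lambda$, while conversely $\Omega_{\Lambda(H)}=\pi(W_{\Lambda(H)})=\pi(\pi^{-1}(H))=H$.

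For the index formulas I would simply transport through $\Omega\simeq P^\vee/Q^\vee$ and $\Omega_\Lambda\simeq\Lambda/Q^\vee$ and invoke the third isomorphism theorem, giving $[\Omega:\Omega_\Lambda]=[P^\vee/Q^\vee:\Lambda/Q^\vee]=[P^\vee:\Lambda]$ and $|\Omega_\Lambda|=[\Lambda:Q^\vee]$. For the semidirect-product decomposition I would use the identity $W_\Lambda=\pi^{-1}(\Omega_\Lambda)$ once more together with the elementary fact that, in any semidirect product $G=N\rtimes H$ with projection $\pi$ of kernel $N$, the preimage of a subgroup $H'\le H$ equals $N\rtimes H'$ (because $H'$ normalizes $N$ and meets it trivially). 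Applying this with $N=W_\mathrm{a}$, $H=\Omega$ and $H'=\Omega_\Lambda$ gives $W_\Lambda\simeq W_\mathrm{a}\rtimes\Omega_\Lambda$ at once. There is no genuine obstacle in this proposition, which is essentially formal once $\pi$ is understood concretely; the only point demanding a little care is the identification $G=W_\Lambda$ for a general intermediate subgroup, that is, checking that every such subgroup is ``straightened'' into a translation lattice times $W$ and that the required $W$-stability of $\Lambda$ comes for free from the trivial action on $P^\vee/Q^\vee$.
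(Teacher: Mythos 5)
Your proof is correct, and its architecture is genuinely different from the paper's, in a worthwhile way. The paper never leaves the two maps of the statement: it checks directly that $\Lambda(H)=\pi^{-1}(H)\cap P^\vee$ is a $W$-lattice (by conjugating translations by elements of $W$, which lie in $\ker\pi$), proves $\Lambda(\Omega_\Lambda)=\Lambda$ and $\Omega_{\Lambda(H)}=H$ by hand --- the key computation being $\pi(\mathrm{t}_\mu)=\pi(\mathrm{t}_\nu w)\Rightarrow\mathrm{t}_{\nu-w(\mu)}w\in W_\mathrm{a}\Rightarrow\nu-w(\mu)\in Q^\vee$ --- derives $|\Omega_\Lambda|=[\Lambda:Q^\vee]$ from the first isomorphism theorem applied to $\pi|_{W_\Lambda}$, and obtains the splitting $W_\Lambda\simeq W_\mathrm{a}\rtimes\Omega_\Lambda$ by restricting the section of $\widehat{W_\mathrm{a}}\twoheadrightarrow\Omega$. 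You instead compute $\pi$ once and for all ($\mathrm{t}_vw\mapsto[v]\in P^\vee/Q^\vee$) and reduce the bijection to the standard correspondence theorem for subgroups containing $\ker\pi=W_\mathrm{a}$; the price is a classification the paper never states, namely that every subgroup $W_\mathrm{a}\subseteq G\subseteq\widehat{W_\mathrm{a}}$ equals $W_\Lambda$ for $\mathrm{t}(\Lambda)=G\cap\mathrm{t}(P^\vee)$, and your crucial step $\mathrm{t}_v=gw^{-1}\in G\cap\mathrm{t}(P^\vee)$ is essentially the paper's displayed computation in another guise. Your route buys two things: injectivity and surjectivity of the correspondence come for free from general group theory, and it becomes visible that the ``$W$-lattice'' hypothesis is redundant, since the trivial action of $W$ on $P^\vee/Q^\vee$ makes every intermediate subgroup automatically $W$-stable (the paper instead verifies stability of $\Lambda(H)$ ad hoc, using that $\Omega$ is abelian). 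What the paper's route buys is brevity: working only with the two stated maps, it skips the classification step entirely. Two details to pin down in your write-up: first, the formula $\pi(\mathrm{t}_vw)=[v]$ presupposes that $[v]\mapsto\pi(\mathrm{t}_v)$ is a well-defined isomorphism $P^\vee/Q^\vee\to\Omega$ (well-definedness follows from $\mathrm{t}(Q^\vee)\subseteq\ker\pi$, surjectivity from $W\subseteq\ker\pi$, and injectivity is exactly the identity $W_\mathrm{a}\cap\mathrm{t}(P^\vee)=\mathrm{t}(Q^\vee)$ that you use later); second, your final equality $\pi(W_{\Lambda(H)})=\pi(\pi^{-1}(H))$ silently uses $W_{\Lambda(H)}=\pi^{-1}(H)$, which is your classification applied to $G=\pi^{-1}(H)$ and is worth stating explicitly.
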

\begin{proof}
First, we prove that the maps $\Omega_\bullet$ and $\Lambda(\bullet)$ are well-defined. It is clear that, given a $W$-lattice $\Lambda$, the set $\Omega_\Lambda=\pi(\Lambda\rtimes W)=\{\phi\in W_\Lambda~;~\phi(\mathcal{A}_0)=\mathcal{A}_0\}$ is a subgroup of $\Omega$. Conversely, le $H\le\Omega$ be a subgroup. We have that $\Lambda(H)=\pi^{-1}(H)\cap P^\vee$ is a subgroup of $\widehat{W_\mathrm{a}}$ and since $\Lambda(H)\subset P^\vee$, $\Lambda(H)$ is countable, hence discrete. We readily have $Q^\vee\subset\Lambda(H)\subset P^\vee$, which gives $\rk(\Lambda(H))=\rk(Q)=\rk(\Phi)$.
Moreover, if $x\in\Lambda(H)$ and $w\in W$, then we have $\pi(x)=\pi(x\cdot\mathrm{t}_0w)=\pi(\mathrm{t}_0w\cdot x)$ (recall that $\Omega$ is abelian) so $wx\in\pi^{-1}(H)$ that is, $wx\in\Lambda(H)$ and $\Lambda(H)$ is indeed a $W$-lattice lying between $Q^\vee$ and $P^\vee$.

We have to prove that $\Lambda\circ \Omega_\bullet=id$ and $\Omega_{\Lambda(\bullet)}=id$. Take a $W$-lattice $\Lambda$ between $Q^\vee$ and $P^\vee$ and let us show that $\Lambda(\Omega_\Lambda)=\Lambda$. If $\mu\in \Lambda$, then $\mu\in P^\vee$ and by construction, $\pi(\mathrm{t}_\mu)\in\pi(\Lambda\rtimes W)$ so $\mu\in \pi^{-1}(\pi(W_\Lambda))\cap P^\vee\stackrel{\tiny{\text{def}}}=\Lambda(\Omega_\Lambda)$. On the other hand, if $\mu\in \pi^{-1}(\pi(W_\Lambda))\cap P^\vee$, then there are $\nu\in \Lambda$ and $w\in W$ such that $\pi(\mathrm{t}_\mu)=\pi(\mathrm{t}_\nu w)$, that is, $1=\pi(\mathrm{t}_\nu w \mathrm{t}_{-\mu})=\pi(\mathrm{t}_{\nu-w(\mu)}w)$. Hence $\mathrm{t}_{\nu-w(\mu)}w\in W_\mathrm{a}$, which means that $\nu-w(\mu)\in Q^\vee\subset\Lambda$, but since $\nu\in\Lambda$ and $\Lambda$ is a $W$-lattice, we have $\mu\in\Lambda$.

Now, let $H\le\Omega$ be a subgroup and let us prove that $\Omega_{\Lambda(H)}=H$. Let $x\in \Omega_{\Lambda(H)}=\pi(W_{\Lambda(H)})$. There exist $\mathrm{t}_\mu w\in \Lambda(H)\rtimes W$ such that 
\[x=\pi(\mathrm{t}_\mu w)=\pi(\mathrm{t}_\mu 1\cdot \mathrm{t}_0w)=\pi(\mathrm{t}_\mu)\underbrace{\pi(\overset{\substack{W_\mathrm{a} \\ \inabove}}{\mathrm{t}_0w})}_{=1}=\pi(\mathrm{t}_\mu)\]
and this is in $H$ because $\mu\in\Lambda(H)=\pi^{-1}(H)\cap P^\vee$, so $x\in H$. Now, if $y\in H$, then we have $y=\pi(\mathrm{t}_\mu w)$ for some $\mu\in P^\vee$ and $w\in W$ because $\pi$ is onto, but the same calculation as above shows that $y=\pi(\mathrm{t}_\mu w)=\pi(\mathrm{t}_\mu)$, so $\mu\in P^\vee\cap\pi^{-1}(H)=\Lambda(H)$ and thus $y\in\pi(W_{\Lambda(H)})=\Omega_{\Lambda(H)}$, as required.

Next, the fact that $[P^\vee:\Lambda]\cdot [\Lambda:Q^\vee]=[P^\vee:Q^\vee]=|\Omega|$ ensures that the two equalities are indeed equivalent. But since $\ker(\pi)=W_\mathrm{a}\le W_\Lambda$ we have
\[\Omega_\Lambda=\pi(W_\Lambda)=\ima(W_\Lambda \stackrel{\tiny{\pi}}\to\Omega)\simeq W_\Lambda/\ker(\pi_{|W_\Lambda})=W_\Lambda/\ker(\pi)=W_\Lambda/W_\mathrm{a}\simeq \Lambda/Q^\vee.\]

Finally, using the above equality we see that the short exact sequence
\[\xymatrix{1 \ar[r] & W_\mathrm{a} \ar[r] & W_\Lambda \ar[r] & \Omega_\Lambda \ar[r] & 1}\]
is exact. It splits since the short exact sequence
\[\xymatrix{1 \ar[r] & W_\mathrm{a} \ar[r] & \widehat{W_\mathrm{a}} \ar^\pi[r] & \Omega \ar[r] & 1}\]
is split.
\end{proof}

We have a useful characterization of elements of $\Omega_\Lambda$, once $\Lambda$ is known.
\begin{lem}\label{characteltsofOmega_Lambda}
Let $\Lambda$ be a $W$-lattice between $Q^\vee$ and $P^\vee$ and consider $\omega_i,\omega_j\in\Omega$. Then, the following holds
\begin{enumerate}[label=\alph*)]
\item We have $\omega_i\in\Omega_\Lambda$ if and only if $\varpi_i^\vee\in\Lambda$.
\item The group $\Omega_\Lambda$ acts on the set ${\mathcal{A}_0}\cap\Lambda$.
\item We have $\omega_i\Omega_\Lambda=\omega_j\Omega_\Lambda$ if and only if $\varpi_i^\vee-\varpi_j^\vee\in\Lambda$.
\end{enumerate}
\end{lem}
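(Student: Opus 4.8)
The plan is to deduce all three assertions from the bijective correspondence of Proposition \ref{identlatticessubgroupsofOmega}, whose relevant content here is the equality $\Lambda=\Lambda(\Omega_\Lambda)=\pi^{-1}(\Omega_\Lambda)\cap P^\vee$, combined with the elementary normalisation $\pi(\mathrm{t}_{\varpi_i^\vee})=\omega_i$. This last identity holds because $\omega_i=\mathrm{t}_{\varpi_i^\vee}w_i$ with $w_i\in W\subseteq W_\mathrm{a}=\ker\pi$, so that $\pi(\mathrm{t}_{\varpi_i^\vee})=\pi(\omega_iw_i^{-1})=\pi(\omega_i)=\omega_i$, the last step using that $\pi$ restricts to the identity on $\Omega$ in the decomposition $\widehat{W_\mathrm{a}}=W_\mathrm{a}\rtimes\Omega$. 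Throughout I keep the identification, made in this section, of a lattice $L$ with its translation group $\mathrm{t}(L)$.

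For a), since $\varpi_i^\vee\in P^\vee$, membership $\varpi_i^\vee\in\Lambda=\pi^{-1}(\Omega_\Lambda)\cap P^\vee$ is equivalent to $\mathrm{t}_{\varpi_i^\vee}\in\pi^{-1}(\Omega_\Lambda)$, that is, to $\pi(\mathrm{t}_{\varpi_i^\vee})\in\Omega_\Lambda$. By the normalisation above this reads $\omega_i\in\Omega_\Lambda$, which is exactly the claim.

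For b), any $\omega\in\Omega_\Lambda\subseteq\Omega$ already stabilises ${\mathcal{A}_0}$ by definition of $\Omega$, so it remains to check that it preserves $\Lambda$. Writing $\omega=\mathrm{t}_\nu w$ with $\nu\in P^\vee$ and $w\in W$, the same computation gives $\pi(\mathrm{t}_\nu)=\pi(\omega w^{-1})=\pi(\omega)=\omega\in\Omega_\Lambda$, whence $\nu\in\pi^{-1}(\Omega_\Lambda)\cap P^\vee=\Lambda$. Then for $\lambda\in{\mathcal{A}_0}\cap\Lambda$ we have $\omega(\lambda)=w(\lambda)+\nu\in\Lambda$, since $\Lambda$ is $W$-stable and $\nu\in\Lambda$; as $\omega(\lambda)$ also lies in ${\mathcal{A}_0}$, we conclude $\omega(\lambda)\in{\mathcal{A}_0}\cap\Lambda$. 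For c), as $\Omega$ is abelian, the coset equality $\omega_i\Omega_\Lambda=\omega_j\Omega_\Lambda$ is equivalent to $\omega_j^{-1}\omega_i\in\Omega_\Lambda$, and using that $\pi$ is a homomorphism together with $\pi(\mathrm{t}_{\varpi_i^\vee})=\omega_i$ we get
\[\omega_j^{-1}\omega_i=\pi(\mathrm{t}_{\varpi_j^\vee})^{-1}\pi(\mathrm{t}_{\varpi_i^\vee})=\pi(\mathrm{t}_{\varpi_i^\vee-\varpi_j^\vee}).\]
Since $\varpi_i^\vee-\varpi_j^\vee\in P^\vee$, this element lies in $\Omega_\Lambda$ if and only if $\mathrm{t}_{\varpi_i^\vee-\varpi_j^\vee}\in\pi^{-1}(\Omega_\Lambda)$, i.e. if and only if $\varpi_i^\vee-\varpi_j^\vee\in\pi^{-1}(\Omega_\Lambda)\cap P^\vee=\Lambda$, as desired.

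I do not expect a serious obstacle: the lemma is essentially bookkeeping around the equality $\Lambda=\pi^{-1}(\Omega_\Lambda)\cap P^\vee$ and the normalisation $\pi(\mathrm{t}_{\varpi_i^\vee})=\omega_i$. The one point demanding care is to keep the translation $\mathrm{t}_{\varpi_i^\vee}$ distinct from the element $\omega_i$ it projects to, and—especially in b)—to verify that the translation vector $\nu$ of a general $\omega\in\Omega_\Lambda$ genuinely lies in $P^\vee$, so that the characterisation of $\Lambda$ can be applied; this is the crux of that part and is settled by the computation $\pi(\mathrm{t}_\nu)=\pi(\omega)=\omega$.
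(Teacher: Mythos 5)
Your proof is correct, and it takes a somewhat different route from the paper's, although both ultimately rest on Proposition \ref{identlatticessubgroupsofOmega}. You derive all three parts formally from the preimage formula $\Lambda=\Lambda(\Omega_\Lambda)=\pi^{-1}(\Omega_\Lambda)\cap P^\vee$ together with the normalisation $\pi(\mathrm{t}_{\varpi_i^\vee})=\omega_i$, i.e.\ you do the bookkeeping downstairs in $\Omega$ through the homomorphism $\pi$; the paper instead works upstairs in $\mathrm{Aff}(V^*)$, using the characterisation $\Omega_\Lambda=\Omega\cap W_\Lambda$ and reading off translation parts of explicit affine decompositions: for a) it argues $\omega_i\in\Omega_\Lambda\Leftrightarrow\omega_i\in W_\Lambda\Leftrightarrow\omega_i(0)=\varpi_i^\vee\in\Lambda$, and for c) it computes $\omega_j^{-1}\omega_i=\mathrm{t}_{w_j^{-1}(\varpi_i^\vee-\varpi_j^\vee)}w_j^{-1}w_i$ and then invokes $W$-stability of $\Lambda$, where you compute $\omega_j^{-1}\omega_i=\pi(\mathrm{t}_{\varpi_i^\vee-\varpi_j^\vee})$ and conclude directly. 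The difference is most visible in b): the paper first cites \cite[VI, \S 2.2, Proposition 5]{bourbaki456} to write any point of $\mathcal{A}_0\cap\Lambda$ as $0$ or a minuscule coweight $\varpi_k^\vee$, and tacitly uses the enumeration of $\Omega\setminus\{1\}$ by the $\omega_i$ from Proposition \ref{descriptionOmega}; your decomposition $\omega=\mathrm{t}_\nu w$ with $\nu\in P^\vee$, followed by $\pi(\mathrm{t}_\nu)=\pi(\omega)=\omega$ to get $\nu\in\Lambda$, avoids both of these root-system-specific inputs and handles an arbitrary element of $\Omega_\Lambda$ at once. What each buys: your argument is more uniform and purely formal (semidirect-product bookkeeping only), at the cost of leaning on the full bijectivity statement $\Lambda(\Omega_\Lambda)=\Lambda$ from the Proposition, which the paper's proof does not need. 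One cosmetic remark: in c) the appeal to commutativity of $\Omega$ is superfluous, since $\omega_i\Omega_\Lambda=\omega_j\Omega_\Lambda\Leftrightarrow\omega_j^{-1}\omega_i\in\Omega_\Lambda$ holds for left cosets in any group.
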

\begin{proof}
\begin{enumerate}[label=\alph*)]
\item This is obvious, we have 
\[\omega_i\in\Omega_\Lambda~\Leftrightarrow~\omega_i\in W_\Lambda~\Leftrightarrow~\varpi_i^\vee=\omega_i(0)\in\Lambda.\]
\item If $\lambda\in{\mathcal{A}_0}\cap\Lambda$, then in particular we have $\lambda\in{\mathcal{A}_0}\cap P^\vee$ so by \cite[VI, \S 2.2, Proposition 5]{bourbaki456} there is some $k\in J$ such that $\lambda=\varpi_k^\vee\in\Lambda$. Thus, if $\omega_i\in\Omega_\Lambda$, we have $\omega_i(\lambda)=\omega_i(\varpi_k^\vee)=w_i(\varpi_k^\vee)+\varpi_i^\vee\in\Lambda$ because $\varpi_i^\vee\in\Lambda$ by $a)$ and $w_i(\varpi_k^\vee)\in\Lambda$ since $\Lambda$ is a $W$-lattice.
\item First, we compute
\[\omega_j^{-1}\omega_i=(\mathrm{t}_{\varpi_j^\vee}w_j)^{-1}\mathrm{t}_{\varpi_i^\vee}w_i=w_j^{-1}\mathrm{t}_{\varpi_i^\vee-\varpi_j^\vee}w_i=\mathrm{t}_{w_j^{-1}(\varpi_i^\vee-\varpi_j^\vee)}w_j^{-1}w_i.\]
Thus, we have
\[\omega_j^{-1}\omega_i\in\Omega_\Lambda~\Leftrightarrow~\mathrm{t}_{w_j^{-1}(\varpi_i^\vee-\varpi_j^\vee)}w_j^{-1}w_i\in\Omega_\Lambda~\Leftrightarrow~w_j^{-1}(\varpi_i^\vee-\varpi_j^\vee)\in\Lambda~\Leftrightarrow~\varpi_i^\vee-\varpi_j^\vee\in\Lambda.\]
\end{enumerate}
\end{proof}
Return to the case $\Lambda=Y$ and recall the notations from the preceding subsection: $I=\{1,\dotsc,r\}$, $\Pi=\{\alpha_i,~i\in I\}$, $\alpha_0=\sum_i n_i\alpha_i$ and $J=\{i\in I~;~n_i=1\}$. Recall also that the polytope
\[F:=F_{P^\vee}=\{\lambda\in{\mathcal{A}_0}~;~\forall j\in J,~\left<\lambda,\alpha_0+\alpha_i\right>\le1\}\]
given in Proposition \ref{funddomforOmega} is a fundamental domain for $\Omega$ in ${\mathcal{A}_0}$.

\begin{prop}\label{intersectionsalongfacets}
The following holds
\[\forall \omega\in\Omega\setminus\{1\},~\mathrm{codim}_F(F\cap\omega F)=1.\]
More precisely, for $1\ne \omega\in \Omega$, the polytopes $F$ and $\omega F$ intersect along their common facet $F\cap\{\lambda\in V^*~;~\left<\lambda,\alpha_0+\alpha_j\right>=1\}$, where $j\in J$ is such that $\omega(\varpi_j^\vee)=0$.
\end{prop}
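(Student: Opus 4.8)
\emph{Proof proposal.} The plan is to extract everything from the sharp form of Proposition~\ref{funddomforOmega}. Parametrise $\Omega\setminus\{1\}$ by the bijection $i\mapsto\omega_i=\mathrm{t}_{\varpi_i^\vee}w_i$ of Proposition-Definition~\ref{descriptionOmega}, write $F:=F_{P^\vee}$, and set $H_i^0:=\{\lambda\in V^*~;~\langle\lambda,\alpha_0+\alpha_i\rangle=1\}$ for $i\in J$. A point $\mu$ lies in $F\cap\omega_i F$ exactly when $\mu\in F$ and $\omega_i^{-1}(\mu)\in F$; the last assertion of Proposition~\ref{funddomforOmega} says that, for $\mu\in F$, the condition $\omega_i^{-1}(\mu)\in F$ holds if and only if $\langle\mu,\alpha_0+\alpha_i\rangle=1$. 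Hence I would first record the exact equality
\[F\cap\omega_i F=F\cap H_i^0,\]
which already exhibits $F\cap\omega_i F$ as the face of $F$ carved out by a single one of its defining hyperplanes, so that $\mathrm{codim}_F(F\cap\omega_i F)\ge1$.

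The crux is then to promote this face to a genuine facet, i.e. to check that the codimension is exactly $1$ and not larger. Since $F$ is full-dimensional (it has interior points by Proposition~\ref{funddomforOmega}), this amounts to showing that the inequality $\langle\lambda,\alpha_0+\alpha_i\rangle\le1$ is irredundant in the half-space presentation of $F$ from Corollary~\ref{funddomforextendedaffineweylgroup}; for a full-dimensional polytope an irredundant defining inequality supports a facet (see \cite[\S2.1]{ziegler-poly}). I would produce a witness violating only this inequality: for $t\in(\tfrac12,1]$ put $\lambda=t\varpi_i^\vee$. As $i\in J$ we have $n_i=1$, so $\langle\lambda,\alpha_k\rangle=t\,\delta_{i,k}\ge0$ and $\langle\lambda,\alpha_0\rangle=t\,n_i=t\le1$, whence $\lambda\in\mathcal{A}_0$; moreover for $j\in J\setminus\{i\}$ one computes $\langle\lambda,\alpha_0+\alpha_j\rangle=t\le1$, while $\langle\lambda,\alpha_0+\alpha_i\rangle=2t>1$. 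Thus the $i$-th inequality is not implied by the remaining defining inequalities of $F$, so $H_i^0$ supports a true facet and $\mathrm{codim}_F(F\cap H_i^0)=1$. (Alternatively one could read off from Proposition~\ref{verticesofFP} the vertices of $F$ lying on $H_i^0$ and verify that they affinely span $H_i^0$, but the single witness above is lighter.)

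Finally, the facet is common to both polytopes: applying the same computation to $\omega_i^{-1}=\omega_{i'}$ yields $F\cap\omega_i^{-1}F=F\cap H_{i'}^0$, whence $F\cap\omega_i F=\omega_i\bigl(F\cap\omega_i^{-1}F\bigr)$ is the $\omega_i$-image of a facet of $F$, hence a facet of $\omega_i F$ as well. As for the labelling, the hyperplane attached to $\omega=\omega_i$ is $H_i^0$, and this index is intrinsically the $i\in J$ determined by $\omega(0)=\varpi_i^\vee$, equivalently $\omega^{-1}(\varpi_i^\vee)=0$; by Lemmae~\ref{actionwimonroots} and~\ref{actionwionroots} it is precisely the node of the extended Dynkin diagram onto which $\omega$ sends the affine node. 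I expect the codimension-exactly-one step to be the only real obstacle: the containment $F\cap\omega F\subseteq H_i^0$ is immediate from Proposition~\ref{funddomforOmega}, whereas ruling out a degenerate lower-dimensional intersection is exactly what requires the irredundancy witness above.
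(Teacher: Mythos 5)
Your proposal is correct, and on two points it is more complete than the paper's own argument. The core identification $F\cap\omega_i F=F\cap H_i^0$ you get in one line by quoting the final (``precisely those with $\left<\lambda,\alpha_0+\alpha_i\right>=1$'') clause of Proposition \ref{funddomforOmega}; the paper does not quote that clause but re-derives the same equivalence inside its proof, via the identity $\left<\lambda,\alpha_0+\alpha_j\right>+\left<\omega_i(\lambda),\alpha_0+\alpha_i\right>=2$ (valid when $\omega_i(\varpi_j^\vee)=0$) obtained from Lemmas \ref{actionwimonroots} and \ref{actionwionroots}, so the substance there is the same. What the paper's proof does \emph{not} contain is any verification that this intersection has codimension exactly one, nor that it is a facet of $\omega_i F$ and not merely of $F$: its proof stops at the set equality. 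Your irredundancy witness $t\varpi_i^\vee$, $t\in(\tfrac12,1]$, settles the first point correctly (the computations $\left<t\varpi_i^\vee,\alpha_0\right>=t$ and $\left<t\varpi_i^\vee,\alpha_0+\alpha_j\right>=t$ for $j\in J\setminus\{i\}$ do use $n_i=1$, i.e.\ $i\in J$), given that $F$ is full-dimensional (immediate, e.g.\ $\tfrac12\mathcal{A}_0\subseteq F$) and the standard fact that an irredundant inequality of a full-dimensional polytope is facet-defining; and transporting the facet property to $\omega_i F$ through $F\cap\omega_i F=\omega_i\left(F\cap\omega_i^{-1}F\right)$ is exactly the right justification of the word ``common''. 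So your instinct that the codimension step is the real content was sound: it is precisely the step the paper leaves implicit.

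One remark on the labelling, which you should state explicitly rather than in passing: your convention --- the facet attached to $\omega$ is $H_i^0$ with $i$ determined by $\omega(0)=\varpi_i^\vee$, equivalently $\omega^{-1}(\varpi_i^\vee)=0$ --- is \emph{not} the one in the printed statement, which asks for $j$ with $\omega(\varpi_j^\vee)=0$; the two differ by exchanging $\omega$ and $\omega^{-1}$. Yours is the correct one. It agrees with what the paper's own proof establishes (there the facet found for $\omega=\omega_i$ is cut out by $\left<\lambda,\alpha_0+\alpha_i\right>=1$, while $j$ denotes the index with $\omega_i(\varpi_j^\vee)=0$), and one can confirm it in type $A_2$: $F\cap\omega_1F$ is the edge of $F$ joining $\tfrac12\varpi_1^\vee$ to $\tfrac13(\varpi_1^\vee+\varpi_2^\vee)$, i.e.\ $F\cap H_1^0$, whereas the printed indexing would name $F\cap H_2^0$. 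The statement as printed thus carries an $\omega\leftrightarrow\omega^{-1}$ slip --- harmless for the quantified assertion and for all later uses (inversion permutes $\Omega\setminus\{1\}$, and only the existence of common facets is ever invoked) --- and your proof proves the corrected version.
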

\begin{proof}
We can write $\omega=\omega_i$ for some $i\in J$. Observe that if $\omega(\varpi_j^\vee)=0$ then, using the Lemmae \ref{actionwimonroots} and \ref{actionwionroots}, we compute for $\lambda \in V^*$,
\[\left<\omega^{-1}(\lambda),\alpha_0+\alpha_j\right>=\left<w_i^{-1}\lambda+\varpi_j^\vee,\alpha_0+\alpha_j\right>=2+\left<\lambda,w_i\alpha_0+w_i\alpha_j\right>=2-\left<\lambda,\alpha_0+\alpha_i\right>.\]
Hence, we have
\begin{equation}\label{exchangeroots}
\omega_i(\varpi_j^\vee)=0~\Rightarrow~\forall \lambda\in V^*,~\left<\lambda,\alpha_0+\alpha_j\right>+\left<\omega_i(\lambda),\alpha_0+\alpha_i\right>=2.
\end{equation}

With that being said, let us prove that 
\[F\cap \{\lambda~;~\left<\lambda,\alpha_0+\alpha_i\right>=1\}=F\cap\omega F.\]
Let $\lambda\in F$ such that $\left<\lambda,\alpha_0+\alpha_i\right>=1$. By (\ref{exchangeroots}), we have $\left<\omega^{-1}(\lambda),\alpha_0+\alpha_j\right>=1$ and if $k\ne j$, we have $w_i\alpha_k=\alpha_{k'}$ for some $k'\in J$ and we compute
\[\left<\omega^{-1}(\lambda),\alpha_0+\alpha_k\right>=\left<w_i^{-1}\lambda+\varpi_j^\vee,\alpha_0+\alpha_k\right>=1+\left<\lambda,\alpha_{k'}-\alpha_i\right>=\left<\lambda,\alpha_0+\alpha_{k'}\right>\le 1.\]
Hence $\omega^{-1}(\lambda)\in F$ and so $\lambda\in F\cap \omega F$.

Conversely, if $\lambda\in F\cap\omega F$, then by (\ref{exchangeroots}) we get
\[1\ge\left<\lambda,\alpha_0+\alpha_i\right>=2-\underbrace{\left<\omega^{-1}(\lambda),\alpha_0+\alpha_j\right>}_{\le1}\ge1\]
hence $\left<\lambda,\alpha_0+\alpha_i\right>=1$, as required.
\end{proof}

\begin{cor}\label{funddominA0intermediatelattice}
To the $W$-lattice $Y$ we associate the following subgroup $\Omega_Y$ of $\Omega$ from Proposition \ref{identlatticessubgroupsofOmega}:
\[\Omega_Y=\{\phi\in W_Y~;~\phi(\mathcal{A}_0)=\mathcal{A}_0\}\le\Omega.\]
Choose a representative $\omega\in\Omega$ for each coset $[\omega]\in\Omega/\Omega_Y$ and define
\[F_Y:=\bigcup_{[\omega]\in \Omega/\Omega_Y}\omega\cdot F_{P^\vee}.\]
Then $F_Y$ is a polytopal complex and is a fundamental domain for the action of $\Omega_Y$ on ${\mathcal{A}_0}$.
\end{cor}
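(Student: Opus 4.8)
The plan is to verify two independent assertions: that $F_Y$ is a fundamental domain for $\Omega_Y$ acting on ${\mathcal{A}_0}$, and that it is a polytopal complex in the sense introduced above. The first is an instance of the classical device by which, once a fundamental domain for a group is known, a fundamental domain for any subgroup is produced by amalgamating one translate per coset; here the ambient fundamental domain $F_{P^\vee}$ for $\Omega$ is supplied by Proposition \ref{funddomforOmega}. The second will follow at once from the facet-adjacency established in Proposition \ref{intersectionsalongfacets}, and will also deliver connectedness for free.

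For the covering property, I would observe that every $\omega'\in\Omega$ can be written as $\omega'=\eta\omega$ with $\eta\in\Omega_Y$ and $\omega$ the chosen representative of its coset, so that
\[\bigcup_{\eta\in\Omega_Y}\eta\cdot F_Y=\bigcup_{\omega'\in\Omega}\omega'\cdot F_{P^\vee}={\mathcal{A}_0}\]
by Proposition \ref{funddomforOmega}. For the overlap condition, I would fix $1\ne\eta\in\Omega_Y$ and write $F_Y\cap\eta F_Y$ as the finite union, over pairs of representatives $\omega,\omega'$, of the sets $\omega\bigl(F_{P^\vee}\cap g\,F_{P^\vee}\bigr)$ with $g:=\omega^{-1}\eta\omega'=\eta\,\omega^{-1}\omega'$, using that $\Omega$ is abelian. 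The key point is that $g=1$ would force $\omega'=\omega\eta^{-1}$, placing $\omega'$ in the same $\Omega_Y$-coset as $\omega$ and hence, for distinct representatives, $\omega=\omega'$; but then $g=\eta\ne1$. Thus $g\ne1$ in every summand, so each $F_{P^\vee}\cap g\,F_{P^\vee}$ has empty interior in ${\mathcal{A}_0}$ by Proposition \ref{funddomforOmega}, and the finite union of their images under the affine isomorphisms $\omega$ is again nowhere dense. This yields the two defining properties of a fundamental domain.

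For the polytopal complex structure, I would note that each $\omega\cdot F_{P^\vee}$ is a closed convex polytope and that, for two distinct representatives $\omega\ne\omega'$,
\[\omega F_{P^\vee}\cap\omega' F_{P^\vee}=\omega\bigl(F_{P^\vee}\cap(\omega^{-1}\omega')F_{P^\vee}\bigr).\]
Since $\omega^{-1}\omega'\in\Omega\setminus\{1\}$, Proposition \ref{intersectionsalongfacets} identifies $F_{P^\vee}\cap(\omega^{-1}\omega')F_{P^\vee}$ with a common facet of $F_{P^\vee}$ and $(\omega^{-1}\omega')F_{P^\vee}$; applying the affine isomorphism $\omega$ shows that $\omega F_{P^\vee}$ and $\omega' F_{P^\vee}$ meet exactly along a common facet, which is precisely the polytopal complex condition. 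As these facets are nonempty, all cells touch pairwise, so $F_Y$ is connected and closed, completing the verification.

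I expect the only delicate point to be the bookkeeping in the overlap estimate, namely the use of the abelianness of $\Omega$ together with the distinctness of the coset representatives to exclude $g=1$; by contrast, the polytopal complex assertion is essentially immediate once one recognizes that Proposition \ref{intersectionsalongfacets} applies verbatim to the single element $\omega^{-1}\omega'\in\Omega$, regardless of how far apart $\omega F_{P^\vee}$ and $\omega' F_{P^\vee}$ sit in the tiling of ${\mathcal{A}_0}$.
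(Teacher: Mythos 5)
Your proof is correct and takes essentially the same route as the paper's: both deduce the covering and overlap properties from Proposition \ref{funddomforOmega} via the coset decomposition of $\Omega$ (crucially using that $\Omega$ is abelian to move the subgroup element past the representatives), and both obtain the polytopal-complex structure, closedness and connectedness from the facet-adjacency of Proposition \ref{intersectionsalongfacets}. The only differences are cosmetic: you exclude $g=1$ directly where the paper argues by contradiction from a nonempty interior, and your covering step is a set-theoretic identity rather than point-chasing.
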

\begin{proof}
By the Proposition \ref{intersectionsalongfacets}, for every subset $A\subseteq \Omega$, the subset $\bigcup_{a\in A}aF$ is a connected closed polytopal complex. 

The rest of this proof is very standard. Let $k:=[P^\vee:Y]=[\Omega:\Omega_Y]$, choose representatives $x_1,\dotsc,x_k\in \Omega$ such that $\Omega/\Omega_Y=\{x_i\Omega_Y,~1\le i \le k\}$ and define $F_Y:=\bigcup_{i=1}^k x_i F_{P^\vee}$. Let $\lambda\in{\mathcal{A}_0}$. Since $F_{P^\vee}$ is a fundamental domain for $\widehat{W_\mathrm{a}}$, there is some $\omega\in \Omega$ such that $\omega^{-1}(\lambda)\in F_{P^\vee}$ and writing $\omega=ux_i$ for some $1\le i \le k$ and some $u\in \Omega_Y$, we obtain $u^{-1}\lambda\in x_i F_{P^\vee}\subset F_Y$. On the other hand, if $u\in \Omega_Y$, then
\[F_Y\cap uF_Y=\bigcup_{1\le i,j\le k}x_iF_{P^\vee} \cap ux_jF_{P^\vee}=\bigcup_{i,j}x_i(F_{P^\vee}\cap x_i^{-1}ux_jF_{P^\vee}).\]
If $F_Y\cap uF_Y$ has non-empty interior, then there are some $i,j$ such that $F_{P^\vee}\cap ux_jx_i^{-1}F_{P^\vee}$ has non-empty interior. This implies that $ux_jx_i^{-1}=1$, thus $x_ix_j^{-1}\in \Omega_Y$ so $i=j$ and $u=1$.
\end{proof}

From this, we deduce the main result of this section:
\begin{theo}\label{funddomintermediatelattice}
If $K$ is only supposed to be compact, let $W_Y:=\mathrm{t}(Y)\rtimes W$ (where $Y=Y(T)$ is the cocharacter lattice) and consider the subgroup
\[\Omega_Y=\{\phi\in W_Y~;~\phi(\mathcal{A}_0)=\mathcal{A}_0\}\le\Omega\]
and choose a representative for each coset in $\Omega/\Omega_Y$. Define
\[F_Y:=\bigcup_{[\omega]\in \Omega/\Omega_Y}\omega\cdot F_{P^\vee},\]
where $F_{P^\vee}$ is the fundamental domain for $\Omega$ in ${\mathcal{A}_0}$ given in Proposition \ref{funddomforOmega}. Then $F_Y$ is a polytopal complex and is a fundamental domain for the action of $W_Y\simeq W_\mathrm{a}\rtimes \Omega_Y$ on $V^*$.
\end{theo}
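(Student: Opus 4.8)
The plan is to deduce the statement from the two fundamental-domain results already in hand, glued together along the semidirect decomposition $W_Y \simeq W_\mathrm{a} \rtimes \Omega_Y$ provided by Proposition \ref{identlatticessubgroupsofOmega} (applied to $\Lambda = Y$). In this decomposition $W_\mathrm{a}$ is the normal factor and $\Omega_Y$ is a complement, so every $g \in W_Y$ factors as $g = w\omega$ with $w \in W_\mathrm{a}$ and $\omega \in \Omega_Y$. Two features will be used repeatedly: by Corollary \ref{funddominA0intermediatelattice} the set $F_Y$ is a closed, connected polytopal complex and a fundamental domain for $\Omega_Y$ acting on $\mathcal{A}_0$; and since $\Omega_Y \le \Omega$, every element of $\Omega_Y$ stabilizes $\mathcal{A}_0$, whence $F_Y \subseteq \mathcal{A}_0$ and $\omega F_Y \subseteq \mathcal{A}_0$ for all $\omega \in \Omega_Y$. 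The closedness and connectedness required of a fundamental domain are thus already granted, so only the covering and empty-interior conditions remain.

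For the covering condition I would start from an arbitrary $\lambda \in V^*$. Since $\mathcal{A}_0$ is a fundamental domain for $W_\mathrm{a}$ on $V^*$ (Theorem \ref{fundalcovefunddom}), there is $w \in W_\mathrm{a}$ with $w^{-1}\lambda \in \mathcal{A}_0$; then, as $F_Y$ is a fundamental domain for $\Omega_Y$ on $\mathcal{A}_0$, there is $\omega \in \Omega_Y$ with $\omega^{-1}w^{-1}\lambda \in F_Y$. Hence $\lambda \in w\omega F_Y$ with $w\omega \in W_Y$, and the $W_Y$-translates of $F_Y$ cover $V^*$.

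The empty-interior condition is where the two layers interact, and I expect the case analysis on the $W_\mathrm{a}$-part to be the only genuinely delicate point. Fix $1 \ne g = w\omega \in W_Y$ as above. If $w = 1$ then $g = \omega \in \Omega_Y \setminus \{1\}$, and $F_Y \cap \omega F_Y$ has empty interior in $\mathcal{A}_0$ by Corollary \ref{funddominA0intermediatelattice}; because $\mathcal{A}_0$ is full-dimensional in $V^*$, having empty interior relative to $\mathcal{A}_0$ is the same as having empty interior in $V^*$. If instead $w \ne 1$, the key observation is that $gF_Y = w(\omega F_Y) \subseteq w\mathcal{A}_0$, so $F_Y \cap gF_Y \subseteq \mathcal{A}_0 \cap w\mathcal{A}_0$; since $W_\mathrm{a}$ acts simply transitively on open alcoves (Theorem \ref{fundalcovefunddom}), the closed alcoves $\mathcal{A}_0$ and $w\mathcal{A}_0$ are distinct and therefore meet only in a union of walls, a set of empty interior. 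In both cases $F_Y \cap gF_Y$ has empty interior, completing the verification. The crux is precisely that the complement $\Omega_Y$ stabilizes $\mathcal{A}_0$: this is what confines each overlap either to the single alcove $\mathcal{A}_0$ (when $w=1$) or to an intersection of two distinct alcoves (when $w \ne 1$), independently of how $g$ mixes its translation and rotation parts. Finally, the claim that $F_Y$ is a polytopal complex is immediate from Corollary \ref{funddominA0intermediatelattice} (equivalently from Proposition \ref{intersectionsalongfacets}).
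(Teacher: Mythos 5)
Your proof is correct and follows exactly the route the paper intends: the paper states Theorem \ref{funddomintermediatelattice} as an immediate consequence of Corollary \ref{funddominA0intermediatelattice} (mirroring the adjoint-case deduction of Corollary \ref{funddomforextendedaffineweylgroup} from Proposition \ref{funddomforOmega}), namely combining the $\Omega_Y$-fundamental domain in $\mathcal{A}_0$ with the $W_\mathrm{a}$-fundamental domain $\mathcal{A}_0$ in $V^*$ through the decomposition $W_Y\simeq W_\mathrm{a}\rtimes\Omega_Y$. You have simply written out the covering and empty-interior verifications that the paper leaves implicit, and your case analysis on the $W_\mathrm{a}$-part (using that $\Omega_Y$ stabilizes $\mathcal{A}_0$) is exactly the right way to do it.
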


We still have to investigate the question of the injectivity of the exponential map on $F_Y$. This is done in the following proposition:
\begin{prop}\label{injectivityofexponfunddomintermediatecase}
The hypothesis of the Lemma \ref{fromlatticetotorus} is fulfilled under the ones of the Theorem \ref{funddomintermediatelattice}. In other words, one has
\[F_Y\cap(Y\setminus\{0\}+F_Y)=\emptyset.\]
\end{prop}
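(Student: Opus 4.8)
The plan is to prove injectivity of $\pr$ on $F_Y$ directly, i.e. that $\lambda,\lambda'\in F_Y$ with $\lambda-\lambda'\in Y$ forces $\lambda=\lambda'$. Recall from Theorem \ref{funddomintermediatelattice} that $F_Y=\bigcup_i x_iF_{P^\vee}$ with $x_i\in\Omega$ ranging over the chosen representatives of $\Omega/\Omega_Y$. So write $\lambda=x_i\mu$, $\lambda'=x_j\mu'$ with $\mu,\mu'\in F_{P^\vee}$, and set $y:=\lambda-\lambda'\in Y$. Since $x_j$ is affine with linear part $\tilde{w}_j\in W$, applying $x_j^{-1}$ gives
\[\nu-\mu'=y',\qquad\text{where }\ \nu:=(x_j^{-1}x_i)\mu=:h\mu,\ \ h:=x_j^{-1}x_i\in\Omega,\ \ y':=\tilde{w}_j^{-1}(y)\in Y,\]
the last membership because $Y$ is a $W$-lattice; note $y=0\Leftrightarrow y'=0$. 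Thus I have reduced to the configuration $\nu\in hF_{P^\vee}$, $\mu'\in F_{P^\vee}$, both inside $\mathcal{A}_0$, with $\nu-\mu'=y'\in Y$; and since $\nu,\mu'\in\mathcal{A}_0$, the quantity $\left<y',\alpha\right>$ is an integer of $[-1,1]$, hence lies in $\{-1,0,1\}$, for every $\alpha\in\Phi$ (as in the proof of Lemma \ref{diffovA0elementsarelinkedbyW}).

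If $x_i,x_j$ represent the same coset then $h=1$, so $\nu=\mu\in F_{P^\vee}$ and $y'\in Y\setminus\{0\}\subseteq P^\vee\setminus\{0\}$ is excluded by the adjoint case already settled in Lemma \ref{A0inter(A0+P)=0}; hence $y'=0$. I may therefore assume $x_i,x_j$ represent distinct cosets, so that $h\neq1$ and $h\notin\Omega_Y$; writing $h=\omega_m$ for the unique $m\in J$ furnished by Proposition-Definition \ref{descriptionOmega}, Lemma \ref{characteltsofOmega_Lambda} gives $\varpi_m^\vee\notin Y$. The crux is now to show $y'\in\mathcal{A}_0$. Suppose some simple root satisfied $\left<y',\alpha_\ell\right>=-1$. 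Then $\left<\mu',\alpha_\ell\right>=\left<\nu,\alpha_\ell\right>+1\ge1$ since $\nu\in\mathcal{A}_0\subseteq\mathcal{C}_0$, whence
\[\left<\mu',\alpha_0\right>=\sum_{p}n_p\left<\mu',\alpha_p\right>\ge n_\ell\left<\mu',\alpha_\ell\right>\ge n_\ell.\]
If $n_\ell\ge2$ this contradicts $\left<\mu',\alpha_0\right>\le1$; if $n_\ell=1$, i.e. $\ell\in J$, it forces $\left<\mu',\alpha_0\right>=\left<\mu',\alpha_\ell\right>=1$, hence $\left<\mu',\alpha_0+\alpha_\ell\right>=2$, contradicting $\mu'\in F_{P^\vee}$. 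So no $\left<y',\alpha_\ell\right>$ equals $-1$, and therefore $\left<y',\alpha_\ell\right>\ge0$ for all $\ell\in I$, while $\left<y',\alpha_0\right>\le1$; that is, $y'\in\mathcal{A}_0$.

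Consequently $y'\in\mathcal{A}_0\cap P^\vee$, which by \cite[VI, \S 2.2, Prop. 5]{bourbaki456} equals $\{0\}\cup\{\varpi_k^\vee\}_{k\in J}$. If $y'\neq0$ then $y'=\varpi_k^\vee$ for some $k\in J$, so $\left<y',\alpha_0\right>=n_k=1$; as $\left<\nu,\alpha_0\right>,\left<\mu',\alpha_0\right>\in[0,1]$ this forces $\left<\mu',\alpha_0\right>=0$, hence $\mu'=0$ and $\nu=y'=\varpi_k^\vee$. But $\varpi_k^\vee$ is a vertex of $\mathcal{A}_0$, so, since $\nu\in\omega_mF_{P^\vee}\subseteq\mathcal{A}_0$, it is an extreme point of $\omega_mF_{P^\vee}$; thus $\omega_m^{-1}\varpi_k^\vee$ is a vertex of $F_{P^\vee}$ lying in $\{0\}\cup\{\varpi_j^\vee\}_{j\in J}$ (Lemma \ref{Omegaactsonvertices}). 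By the description of $\vertices(F_{P^\vee})$ in Proposition \ref{verticesofFP} the only such point is $0$ (each $\varpi_j^\vee$, $j\in J$, fails to be a vertex), so $\varpi_k^\vee=\omega_m(0)=\varpi_m^\vee$, i.e. $k=m$. Then $\varpi_m^\vee=y'\in Y$, contradicting $\varpi_m^\vee\notin Y$. Hence $y'=0$, so $y=0$ and $\lambda=\lambda'$, proving $F_Y\cap(Y\setminus\{0\}+F_Y)=\emptyset$.

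The main obstacle is precisely the off-diagonal case $h\neq1$: in contrast with the adjoint situation of Lemma \ref{A0inter(A0+P)=0}, only one of the two points ($\mu'$) is constrained to lie in $F_{P^\vee}$, the other ($\nu$) lying in a translate $\omega_mF_{P^\vee}$, so the symmetric argument there does not transfer verbatim. The asymmetry is overcome in the sign analysis showing $y'\in\mathcal{A}_0$ (which crucially uses the defining inequalities of $F_{P^\vee}$ at $\mu'$) and in the final extreme-point argument, into which the hypothesis $\varpi_m^\vee\notin Y$—equivalently $\omega_m\notin\Omega_Y$—enters to produce the contradiction.
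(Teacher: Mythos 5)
Your proof is correct, and it is organized genuinely differently from the paper's. The paper keeps both points $\lambda\in\omega F_{P^\vee}$, $\lambda'\in\omega' F_{P^\vee}$ in place and case-splits on whether $\mu=\lambda-\lambda'$ lies in $\mathcal{A}_0$; since neither point is then known to satisfy the defining inequalities of $F_{P^\vee}$, its sign analysis only yields that $\lambda'$ (and possibly $\lambda$) is a minuscule coweight, and it concludes via a standalone Claim (for $h\in J$ and $\omega\in\Omega$, $\varpi_h^\vee\in\omega F_{P^\vee}$ forces $\omega=\omega_h$) together with parts a) and c) of Lemma \ref{characteltsofOmega_Lambda} and the normalization that the trivial coset of $\Omega/\Omega_Y$ is represented by $1$. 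You instead conjugate by $x_j^{-1}$ --- using that the linear part of $x_j$ lies in $W$ and hence preserves $Y$ --- which reduces everything to the asymmetric configuration $\nu\in hF_{P^\vee}$, $\mu'\in F_{P^\vee}$, $\nu-\mu'\in Y$. This buys you two things: the diagonal case $h=1$ is disposed of by quoting Lemma \ref{A0inter(A0+P)=0} as a black box, and in the off-diagonal case the $F_{P^\vee}$-inequalities are available at $\mu'$, so your sign analysis shows directly that $y'\in\mathcal{A}_0$, after which $y'=\varpi_k^\vee$, $\mu'=0$ and $\nu=\varpi_k^\vee$ follow quickly; you need neither the paper's case distinction on the position of the difference vector nor its choice of coset representatives. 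The one place where you are heavier than necessary is the final pin-down $k=m$: you invoke extreme points and the vertex description of Proposition \ref{verticesofFP}, whereas membership alone suffices --- $\omega_m^{-1}(\varpi_k^\vee)$ lies in $\{0\}\cup\{\varpi_\ell^\vee\}_{\ell\in J}$ by Lemma \ref{Omegaactsonvertices} and lies in $F_{P^\vee}$, yet $\varpi_\ell^\vee\notin F_{P^\vee}$ because $\left<\varpi_\ell^\vee,\alpha_0+\alpha_\ell\right>=2>1$; this shortcut is exactly the paper's Claim. Either way, the contradiction with $\varpi_m^\vee\notin Y$ (Lemma \ref{characteltsofOmega_Lambda}, part a)) closes the argument, and every result you cite is established before this proposition, so there is no circularity.
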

\begin{proof}
Let $\lambda,\lambda'\in F_Y$ and suppose by contradiction that $0\ne \mu:=\lambda-\lambda'\in Y$. In particular we have $\mu\in P^\vee$ and since $\lambda,\lambda'\in{\mathcal{A}_0}$, we have $\left<\mu,\alpha\right>\in\{-1,0,1\}$ for every $\alpha\in\Phi$. Without loss of generality, as $\Omega_Y$ acts on $Y$, we may assume that the trivial coset in $\Omega/\Omega_Y$ is represented by $1\in\Omega$. 

First, suppose that $\mu\notin{\mathcal{A}_0}$. Since $\left<\mu,\alpha_0\right>\le1$, we may choose $i\in I$ such that $\left<\mu,\alpha_i\right>=-1$. Then, we have $1\ge\left<\lambda',\alpha_i\right>=\left<\lambda,\alpha_i\right>+1\ge1$ so $\left<\lambda',\alpha_i\right>=1$ and thus $\left<\lambda',\alpha_k\right>=0$ for $k\ne i$ since $\lambda'\in{\mathcal{A}_0}$ and therefore $\lambda'=\varpi_i^\vee$. 

Now, if there is some $j$ such that $\left<\mu,\alpha_j\right>\ne0$, then $\left<\mu,\alpha_j\right>=1$ and similarly as above, we get $\left<\lambda,\alpha_j\right>=1$ and $\lambda=\varpi_j^\vee$. Notice that $j\in J$ because $\lambda\in {\mathcal{A}_0}$.

\underline{Claim} : For every $h\in J$ and every $\omega\in\Omega$, we have $\omega=\omega_h$ as soon as $\varpi_h^\vee\in \omega F_{P^\vee}$.
\newline Indeed, it suffices to show that $\omega^{-1}(\varpi_h^\vee)=0$. Suppose otherwise, then there exists $\ell\in J$ such that $\omega^{-1}(\varpi_h^\vee)=\varpi_\ell^\vee$ and this would be in $F_{P^\vee}$, a contradiction since $\left<\varpi_\ell^\vee,\alpha_0+\alpha_\ell\right>=2$ and $\ell\in J$.

This proves that if $\lambda=\varpi_j^\vee\in \omega F_{P^\vee}$ for some $\omega$, then $\omega=\omega_j$ and similarly for $\lambda'$. Since $F_Y$ is the union of the $\omega F_{P^\vee}$ for $\omega$ describing a set of coset representatives modulo $\Omega_Y$, and since $\lambda-\lambda'=\varpi_j^\vee-\varpi_i^\vee\in Y$, we must have $\omega_i\Omega_Y=\omega_j\Omega_Y$ by the Lemma \ref{characteltsofOmega_Lambda} and thus $\lambda=\lambda'$, a contradiction.

Hence, if $\mu\notin{\mathcal{A}_0}$, then for all $k\ne i$, we have $\left<\mu,\alpha_k\right>=0$, so $\mu=-\varpi_i^\vee\in Y$ and $\lambda'=\lambda+\varpi_i^\vee$. Henceforth, $\lambda'=\varpi_i^\vee\in Y$ and the only $\omega\in\Omega$ such that $\varpi_i^\vee\in\omega F_{P^\vee}$ is $\omega=\omega_i$ by the claim and this is in $\Omega_Y$ by the Lemma \ref{characteltsofOmega_Lambda}. By definition of $F_Y$ and since the trivial coset in $\Omega/\Omega_Y$ is represented by $1$, this implies that $\varpi_i^\vee\in Y\cap F_{P^\vee}\subset P^\vee\cap F_{P^\vee}=\{0\}$ by the Lemma \ref{A0inter(A0+P)=0}, a contradiction again.

The only remaining possibility is $\mu\in{\mathcal{A}_0}$. But in this case $\mu\in Y\cap{\mathcal{A}_0}$ and $\mu=\varpi_i^\vee\in Y$ for some $i\in J$. Therefore, we obtain as above $\left<\lambda,\alpha_i\right>=\left<\lambda',\alpha_i\right>+1$, so $\left<\lambda,\alpha_i\right>=1$ and so $\lambda=\varpi_i^\vee\in Y$, but we have seen this to be impossible.
\end{proof}

\begin{cor}\label{funddomforKanycase}
If $K$ is a compact connected Lie group and if $Y:=X(T)^\vee$ is the cocharacter lattice of the chosen maximal torus $T\le K$, let $F_Y$ be the fundamental domain from the Theorem \ref{funddomintermediatelattice}. Then $\exp(F_Y)$ is homeomorphic to the polytopal complex $F_Y$ and is a fundamental domain for the action of $W$ on $T$. %In particular, the decomposition \[T=\coprod_{w\in W}w\cdot \exp(F_Y)\] is cellular, regular and $W$-equivariant.
\end{cor}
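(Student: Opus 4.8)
The plan is to assemble the three results that the preceding subsections were built to feed into here; the corollary is essentially a bookkeeping statement chaining them together. First I would recall that, by Theorem \ref{funddomintermediatelattice}, the polytopal complex $F_Y$ is a fundamental domain for the intermediate affine Weyl group $W_Y=\mathrm{t}(Y)\rtimes W$ acting on $V^*$. Applying the first part of Lemma \ref{fromlatticetotorus} with the lattice $Y$ and the group $W_Y$, the image $\pr(F_Y)$ under the $W$-equivariant projection $\pr:V^*\twoheadrightarrow V^*/Y$ is then a fundamental domain for the induced action of $W$ on the quotient $V^*/Y$.

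Next I would upgrade this to a homeomorphism by invoking the injectivity condition. Proposition \ref{injectivityofexponfunddomintermediatecase} gives precisely $F_Y\cap(Y\setminus\{0\}+F_Y)=\emptyset$, which is the hypothesis of the second part of Lemma \ref{fromlatticetotorus}; consequently the restricted map $\pr:F_Y\to\pr(F_Y)$ is a homeomorphism. It remains only to transport the picture from the quotient $V^*/Y$ to the torus $T$. For this I would use Lemma \ref{ker(exp)}: since $Y=Y(T)$ is the kernel of the normalized exponential map, the latter descends to a $W$-equivariant isomorphism of Lie groups $V^*/Y\xrightarrow{\sim}T$. Under this isomorphism $\pr(F_Y)$ corresponds to $\exp(F_Y)$, and because the isomorphism is both a homeomorphism and $W$-equivariant, the fundamental domain property for $W$ on $V^*/Y$ transports verbatim to the fundamental domain property for $W$ on $T$. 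Composing $\pr:F_Y\to\pr(F_Y)$ with the quotient isomorphism produces the desired homeomorphism $\exp:F_Y\to\exp(F_Y)$.

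The only point that deserves a moment's care — and it is not really an obstacle, since the required equivariance is already recorded in Lemma \ref{ker(exp)} — is checking that the three maps in play ($\pr$, the quotient isomorphism $V^*/Y\simeq T$, and hence their composite $\exp$) respect the $W$-action compatibly, so that the phrase \emph{``fundamental domain for $W$ on $V^*/Y$''} genuinely becomes \emph{``fundamental domain for $W$ on $T$''}. Everything else is a direct citation of the preceding results, which were designed expressly to be combined in this final step; in particular no new geometric input about the polytopal complex $F_Y$ is needed beyond what Theorem \ref{funddomintermediatelattice} and Proposition \ref{injectivityofexponfunddomintermediatecase} already supply.
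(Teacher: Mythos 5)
Your proposal is correct and is precisely the argument the paper intends: the corollary is stated without proof exactly because it is the direct assembly of Theorem \ref{funddomintermediatelattice} (fundamental domain for $W_Y$ on $V^*$), Proposition \ref{injectivityofexponfunddomintermediatecase} (the injectivity hypothesis of Lemma \ref{fromlatticetotorus}), and the $W$-equivariant isomorphism $V^*/Y\simeq T$ of Lemma \ref{ker(exp)}. Your attention to transporting the fundamental-domain property through the $W$-equivariant homeomorphism matches the paper's setup, where that equivariance was recorded in advance for exactly this purpose.
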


\begin{prop}\label{anotherfunddomintermediatecases}
Let
\[J_Y:=\{j\in J~;~\varpi_j^\vee\in Y\}=\{j\in J~;~\omega_j\in\Omega_Y\}\] and define
\[F'_Y:=\{\lambda\in{\mathcal{A}_0}~;~\forall j\in J_Y,~\left<\lambda,\alpha_0+\alpha_j\right>\le1\}.\]
Then $F_Y'$ is a convex polytope and is a fundamental domain for $W_Y=\mathrm{t}(Y)\rtimes W$ on $V^*$.
\end{prop}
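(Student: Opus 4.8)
The plan is to reduce everything to showing that $F'_Y$ is a fundamental domain for $\Omega_Y$ acting on $\mathcal{A}_0$, and then to conclude exactly as Corollary \ref{funddomforextendedaffineweylgroup} was deduced from Proposition \ref{funddomforOmega}: since $\mathcal{A}_0$ is a fundamental domain for $W_\mathrm{a}$ on $V^*$ (Theorem \ref{fundalcovefunddom}) and $W_Y\simeq W_\mathrm{a}\rtimes\Omega_Y$ by Proposition \ref{identlatticessubgroupsofOmega} (applied to $\Lambda=Y$), a fundamental domain for $\Omega_Y$ on $\mathcal{A}_0$ is automatically a fundamental domain for $W_Y$ on $V^*$. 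Convexity is immediate, as $F'_Y$ is the intersection of the polytope $\mathcal{A}_0$ with the finitely many closed half-spaces $\langle\lambda,\alpha_0+\alpha_j\rangle\le1$, $j\in J_Y$.

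For the fundamental-domain claim I would replay the three-step proof of Proposition \ref{funddomforOmega} verbatim, with $J$ replaced by $J_Y$ and $\Omega$ by $\Omega_Y$. The starting remark is that, by Lemma \ref{characteltsofOmega_Lambda} a) together with the bijection $i\mapsto\omega_i$ of Proposition-Definition \ref{descriptionOmega}, one has $\Omega_Y\setminus\{1\}=\{\omega_j~;~j\in J_Y\}$, so the non-trivial elements of $\Omega_Y$ are precisely indexed by $J_Y$. The emptiness of interior of $F'_Y\cap\omega F'_Y$ and the covering property then rest on the very same computations as before, and the key Lemmae \ref{actionwimonroots} and \ref{actionwionroots} apply unchanged since they concern individual elements $\omega_i$ with $i\in J\supseteq J_Y$.

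The genuinely non-formal point is to check that the indices produced by those computations remain inside $J_Y$, so that the defining inequalities of $F'_Y$ (which only range over $j\in J_Y$) are available when needed. Two closure facts suffice: (i) if $\omega_i\in\Omega_Y$ and $k\in J$ is the index with $\omega_i(\varpi_k^\vee)=0$ (equivalently $w_i^{-1}\alpha_0=-\alpha_k$, by Lemma \ref{actionwimonroots}), then $k\in J_Y$; and (ii) if $\omega_i\in\Omega_Y$, $j\in J_Y$ and $\omega_i(\varpi_j^\vee)=\varpi_k^\vee\ne0$, then $k\in J_Y$. Both follow from a single principle: since $\omega_i\in\Omega_Y\subseteq W_Y=\mathrm{t}(Y)\rtimes W$, its translation part lies in $Y$, and $Y$ is $W$-stable. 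Concretely, in (i) one has $\varpi_k^\vee=\omega_i^{-1}(0)\in Y$, and in (ii) one has $\omega_i(\varpi_j^\vee)=w_i(\varpi_j^\vee)+\varpi_i^\vee\in Y$ because $\varpi_i^\vee,\varpi_j^\vee\in Y$; in both cases $\varpi_k^\vee\in Y$, i.e.\ $k\in J_Y$.

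With these in hand the two non-trivial steps go through. For the empty-interior step, fact (i) gives, for $\lambda\in F'_Y$ and $\omega_i\in\Omega_Y\setminus\{1\}$,
\[\langle\omega_i(\lambda),\alpha_0+\alpha_i\rangle=2-\langle\lambda,\alpha_0+\alpha_k\rangle\ge1,\]
so $F'_Y\cap\omega_i^{-1}F'_Y$ lies in the hyperplane $\langle\omega_i(\lambda),\alpha_0+\alpha_i\rangle=1$ and has empty interior. For the covering step, given $\lambda\in\mathcal{A}_0$ one chooses $i\in J_Y$ maximizing $\langle\lambda,\alpha_i\rangle$ over $J_Y$ (if $J_Y=\emptyset$ then $\Omega_Y=\{1\}$, $F'_Y=\mathcal{A}_0$ and there is nothing to prove); if $\langle\lambda,\alpha_0+\alpha_i\rangle\le1$ then $\lambda\in F'_Y$, and otherwise one verifies $\omega_i^{-1}(\lambda)\in F'_Y$ by checking each $j\in J_Y$: when $\omega_i(\varpi_j^\vee)=0$ one gets $\langle\omega_i^{-1}(\lambda),\alpha_0+\alpha_j\rangle=2-\langle\lambda,\alpha_0+\alpha_i\rangle<1$, and when $\omega_i(\varpi_j^\vee)=\varpi_k^\vee\ne0$ fact (ii) ensures $k\in J_Y$, whence $\langle\omega_i^{-1}(\lambda),\alpha_0+\alpha_j\rangle=1+\langle\lambda,\alpha_k-\alpha_i\rangle\le1$ by maximality of $\langle\lambda,\alpha_i\rangle$. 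Thus $F'_Y$ is a fundamental domain for $\Omega_Y$ on $\mathcal{A}_0$, and the reduction above completes the proof. I expect the only real obstacle to be isolating the closure facts (i)–(ii); once these are established, the remainder is a line-by-line transcription of the proof of Proposition \ref{funddomforOmega}.
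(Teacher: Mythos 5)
Your proposal is correct and follows essentially the same route as the paper: the paper likewise reduces to showing that $F_Y'$ is a fundamental domain for $\Omega_Y$ on $\mathcal{A}_0$ and then replays the proof of Proposition \ref{funddomforOmega} with $J$ replaced by $J_Y$, justifying your closure facts (i) and (ii) by citing Lemma \ref{characteltsofOmega_Lambda} (that $\Omega_Y$ acts on $Y\cap\mathcal{A}_0$), which is the same observation you derive directly from the $W$-stability of $Y$ and the fact that translation parts of elements of $\Omega_Y$ lie in $Y$.
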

\begin{proof}
We may proceed as in the adjoint case, replacing $\Omega$ by $\Omega_Y$. It clearly suffices to show that $F_Y'$ is a fundamental domain for $\Omega_Y$ in ${\mathcal{A}_0}$.

First, since $F_Y'$ is an intersection of closed half-spaces, it is a convex polytope in ${\mathcal{A}_0}$. In particular, it is closed and connected.

If $\omega_i\in\Omega_Y$ and $\lambda\in F_Y'$, then there exists $j\in J$ such that $\omega_i(\varpi_j^\vee)=0$ and we have $j\in J_Y$ since $\Omega_Y$ acts on $Y\cap{\mathcal{A}_0}$. We have
\[\left<\omega_i(\lambda),\alpha_0+\alpha_i\right>=2-\left<\lambda,\alpha_0+\alpha_j\right>\]
and
\[F_Y'\cap \omega_i^{-1}F'_Y\subset {\mathcal{A}_0}\cap \{\mu\in V^*~;~\left<\mu,\alpha_0+\alpha_i\right>=1\}\]
and thus $F_Y'\cap \omega_i^{-1}F_Y'$ has empty interior in ${\mathcal{A}_0}$

Let now $\lambda\in{\mathcal{A}_0}$ and choose $i\in J_Y$ such that $\left<\lambda,\alpha_i\right>=\max_{j\in J_Y}\left<\lambda,\alpha_j\right>$. If $\left<\lambda,\alpha_0+\alpha_i\right>\le1$, then $\lambda\in F_Y'$. Otherwise, choose $j\in J_Y$ such that $\omega_i(\varpi_j^\vee)\ne0$ and take some $k\in J$ such that $\omega_i(\varpi_j^\vee)=\varpi_k^\vee$. Then $k\in J_Y$ because $j\in J_Y$ and $\omega_i\in\Omega_Y$ by the Lemma \ref{characteltsofOmega_Lambda}. Hence
\[\left<\omega_i^{-1}(\lambda),\alpha_0+\alpha_j\right>=1+\underbrace{\left<\lambda,\alpha_k-\alpha_i\right>}_{\le0}\le1.\]
Now, if $j\in J_Y$ is such that $\omega_i(\varpi_j^\vee)=0$, then $w_i(\alpha_j)=-\alpha_0$ and
\[\left<\omega_i^{-1}(\lambda),\alpha_0+\alpha_j\right>=2-\underbrace{\left<\lambda,\alpha_0+\alpha_i\right>}_{>1}<1\]
and thus $\omega_i^{-1}(\lambda)\in F_Y'$, as required.
\end{proof}

Reproducing the proof of the Proposition \ref{verticesofFP} \emph{verbatim} in this case yields the following description of the vertices of $F_Y'$:
\begin{prop}\label{verticesofFY}
Define the following set of isobarycenters
\[\mathcal{B}_m^Y:=\left\{\frac{1}{|J'_Y|+1}\sum_{j\in J'_Y}\varpi_j^\vee~;~\emptyset\subseteq J'_Y\subseteq J_Y\right\},\]
where $J_Y=\{j\in J~;~\varpi_j^\vee\in Y\}$. Then the vertices of the polytope $F_Y'$ are given by
\[\vertices(F_Y')=\mathcal{B}_m^Y\cup\left\{\frac{\varpi_i^\vee}{n_i}\right\}_{i\in I\setminus J_Y}.\]
\end{prop}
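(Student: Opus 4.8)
The plan is to transcribe the proof of Proposition \ref{verticesofFP} with the set $J$ everywhere replaced by $J_Y$ and $F_{P^\vee}$ replaced by $F_Y'$, the only real content being the verification that every step survives this substitution. Write $\mathcal{V}^Y:=\mathcal{B}_m^Y\cup\{\varpi_i^\vee/n_i\}_{i\in I\setminus J_Y}$ for the candidate vertex set. As in the adjoint case, if $J_Y=\emptyset$ then $F_Y'={\mathcal{A}_0}$, $\mathcal{B}_m^Y=\{0\}$, and the claim reduces to the description of $\vertices({\mathcal{A}_0})$ recalled in Remark \ref{A0ispolytope}; so assume $J_Y\ne\emptyset$. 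Introduce the hyperplanes $H_i:=\{\lambda~;~\left<\lambda,\alpha_i\right>=0\}$ for $i\in I$, $H_0:=\{\lambda~;~\left<\lambda,\alpha_0\right>=1\}$, and $H_j^0:=\{\lambda~;~\left<\lambda,\alpha_0+\alpha_j\right>=1\}$ now only for $j\in J_Y$. These bound exactly the defining half-spaces of the polytope $F_Y'$, which is a genuine polytope and a fundamental domain by Proposition \ref{anotherfunddomintermediatecases}; hence every face of $F_Y'$ is the intersection of $F_Y'$ with the sub-family of these hyperplanes on which it is supported, a vertex being supported on at least $r$ of them.

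For the inclusion $\mathcal{V}^Y\subseteq\vertices(F_Y')$ I would check, just as before, that each candidate lies in $F_Y'$ and on at least $r$ of the hyperplanes. The computation to redo is that for $J_Y'\subseteq J_Y$ the isobarycenter $b:=\frac{1}{|J_Y'|+1}\sum_{j\in J_Y'}\varpi_j^\vee$ satisfies $\left<b,\alpha_i\right>=0$ for $i\notin J_Y'$ and, crucially using $n_j=1$ for $j\in J_Y\subseteq J$, that $\left<b,\alpha_0\right>=|J_Y'|/(|J_Y'|+1)$, whence $\left<b,\alpha_0+\alpha_j\right>=1$ for $j\in J_Y'$ and strictly less than $1$ for $j\in J_Y\setminus J_Y'$; thus $b\in F_Y'$ lies on the $r$ hyperplanes $\{H_i\}_{i\notin J_Y'}\cup\{H_j^0\}_{j\in J_Y'}$. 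Similarly $\varpi_\ell^\vee/n_\ell$ for $\ell\in I\setminus J_Y$ lies on $H_i$ for $i\ne\ell$ and on every $H_j^0$ (as $j\in J_Y$ forces $j\ne\ell$), giving at least $r$ hyperplanes.

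For the reverse inclusion, take $v\in\vertices(F_Y')$ and write $\{v\}=F_Y'\cap\left(\bigcap_{i\in I_v}H_i\cap\bigcap_{j\in J_v}H_j^0\right)$ with $I_v\subseteq\{0\}\cup I$, $J_v\subseteq J_Y$ and $|I_v|+|J_v|=r$. If $I_v\cap J_v\ne\emptyset$, pick $k$ in it: then $\left<v,\alpha_k\right>=0$ and $\left<v,\alpha_0+\alpha_k\right>=1$ give $\left<v,\alpha_0\right>=1$, and here is the single point where the passage from $J$ to $J_Y$ must be watched — the constraints defining $F_Y'$ yield $\left<v,\alpha_j\right>\le0$, hence $\left<v,\alpha_j\right>=0$, only for $j\in J_Y$, but that is exactly enough to force $J_v=J_Y$. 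Then either $J_Y=I$ and $v=0\in\mathcal{B}_m^Y$, or some $\ell\in I\setminus J_Y$ has $\left<v,\alpha_\ell\right>>0$, so $\ell\notin I_v$ and the inclusion $\{\varpi_\ell^\vee/n_\ell\}=F_Y'\cap\left(\bigcap_{\ell\ne i\in I}H_i\cap\bigcap_{j\in J_Y}H_j^0\right)\subseteq\{v\}$ identifies $v=\varpi_\ell^\vee/n_\ell$. If instead $I_v\cap J_v=\emptyset$, the isobarycenter computation above shows $\frac{1}{|J_v|+1}\sum_{j\in J_v}\varpi_j^\vee$ lies in $F_Y'$ and on all the hyperplanes cutting out $v$, so $v$ equals it and lies in $\mathcal{B}_m^Y$. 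The main, and essentially only, obstacle is thus the bookkeeping check that every inequality used in the adjoint argument involved an index of $J$ that here lies in $J_Y$ — which holds because $J_Y\subseteq J$ guarantees $n_j=1$ for $j\in J_Y$ and because $F_Y'$ is cut out precisely by the constraints indexed by $J_Y$.
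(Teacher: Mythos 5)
Your proposal takes exactly the paper's route: the paper's own proof of this proposition is literally the single statement that the proof of Proposition \ref{verticesofFP} goes through verbatim with $J$ replaced by $J_Y$, and the checkpoint you isolate --- that all the inequalities used are those indexed by $J_Y$, while $J_Y\subseteq J$ still guarantees $n_j=1$ --- is precisely the content of that transcription. So in substance you have reproduced the intended argument.

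One slip is worth repairing, and it is a place where you deviate from (rather than inherit from) the paper's bookkeeping. You allow $0\in I_v$, i.e.\ you admit the hyperplane $H_0$ in the representation of a vertex, but your final case ($I_v\cap J_v=\emptyset$) does not survive this: the isobarycenter $b=\frac{1}{|J_v|+1}\sum_{j\in J_v}\varpi_j^\vee$ has $\langle b,\alpha_0\rangle=|J_v|/(|J_v|+1)<1$, so it does \emph{not} lie on $H_0$, and the inclusion identifying $v$ with $b$ collapses. This sub-case is not vacuous: for any $\ell\in I\setminus J_Y$ the vertex $\varpi_\ell^\vee/n_\ell$ admits the representation $\{\varpi_\ell^\vee/n_\ell\}=F_Y'\cap H_0\cap\bigcap_{i\in I\setminus\{\ell\}}H_i$, which has $I_v\cap J_v=\emptyset$ and $0\in I_v$, and there $v\ne b=0$. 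The fix is one line and is what the paper does implicitly by writing $I_v\subseteq I$: in the reduced case $J_Y\ne\emptyset$, the inequality $\langle\lambda,\alpha_0\rangle\le1$ is implied by $\langle\lambda,\alpha_0+\alpha_j\rangle\le1$ and $\langle\lambda,\alpha_j\rangle\ge0$ for any single $j\in J_Y$, so $H_0$ supports no facet of $F_Y'$ and every face representation may be taken with $I_v\subseteq I$. (Alternatively, keep $0\in I_v$ but observe that $v\in H_0$ forces $\langle v,\alpha_j\rangle=0$ and $\langle v,\alpha_0+\alpha_j\rangle=1$ for all $j\in J_Y$, which places you back in your first case.) With that adjustment your argument is complete and coincides with the paper's.
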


\begin{exemple}\label{F'notinj}
This fundamental domain $F_Y'$ can contain different points that are congruent modulo $Y$.

For instance, let $(X(T),\Phi,Y(T),\Phi^\vee)$ be the root datum of the Lie group $SU_4(\C)/\{\pm1\}$, with maximal torus being the the image in the quotient of the diagonal matrices of $SU_4$. In particular, $\Phi$ is of type $A_3$. Denote the simple roots by $\Pi:=\{\alpha,\beta,\gamma\}$ with extended Dynkin diagram

\begin{center}
\begin{tikzpicture}
	\coordinate (a) at (-1,0);
	\coordinate (b) at (0,0);
	\coordinate (c) at (1,0);
	\coordinate (a0) at (0,1);
	
	\draw (a) node[below]{$\alpha$};
	\draw (b) node[below]{$\beta$};
	\draw (c) node[below]{$\gamma$};
	\draw (a0) node[above]{$-\alpha_0$};
	
	\draw (a)--(b)
	(b)--(c)
	(c)--(a0)
	(a0)--(a);
	
	\fill[fill=white] (a) circle (2.5pt);
	\fill[fill=white] (b) circle (2.5pt);
	\fill[fill=white] (c) circle (2.5pt);
	\fill[fill=white] (a0) circle (2.5pt);
	\node[mark size=2.5pt] at (a0) {\pgfuseplotmark{otimes}};
	\draw (a) circle (2.5pt);
	\draw (b) circle (2.5pt);
	\draw (c) circle (2.5pt);
\end{tikzpicture}	
\end{center}

Let also $\varpi_\alpha^\vee$, $\varpi_\beta^\vee$ and $\varpi_\gamma^\vee$ be the corresponding fundamental weights. We have (see \cite[\S 13.2, Table 1]{humlie})
\[\varpi_\alpha=\frac{3\alpha+2\beta+\gamma}{4},~\varpi_\beta=\frac{2\alpha+4\beta+2\gamma}{4},~\varpi_\gamma=\frac{\alpha+2\beta+3\gamma}{4},\]
and every fundamental weight is minuscule since the highest root is $\alpha_0=\alpha+\beta+\gamma$. In this case, the cocharacter lattice $Y=Y(T)$ is given by
\[Y=\Z\left<\alpha^\vee\right>\oplus\Z\left<\beta^\vee\right>\oplus\Z\left<\frac{\alpha^\vee+\gamma^\vee}{2}\right>=:\Z\left<\alpha^\vee\right>\oplus\Z\left<\beta^\vee\right>\oplus\Z\left<y\right>.\]

We have
\[\gamma^\vee=2y-\alpha^\vee\in Y,~\varpi_\alpha^\vee=\frac{\alpha^\vee+\beta^\vee+y}{2}\notin Y,~\varpi_\gamma^\vee=\frac{\beta^\vee-\alpha^\vee+3y}{2}\notin Y\]
and hence $Q^\vee\subsetneq Y\subsetneq P^\vee$. Furthermore, $Y\cap{\mathcal{A}_0}=\{\varpi_\beta^\vee\}$ and
\[F_Y'=\{\lambda\in{\mathcal{A}_0}~;~\left<\lambda,\alpha_0+\beta\right>\le1\}=\{\lambda\in \conv(0,\varpi_\alpha^\vee,\varpi_\beta^\vee,\varpi_\gamma^\vee)~;~\left<\lambda,\alpha+2\beta+\gamma\right>\le1\}.\]
So $\varpi_\alpha^\vee$ and $\varpi_\gamma^\vee$ are distinct elements of $F_Y'$ and however,
\[\varpi_\gamma^\vee-\varpi_\alpha^\vee=\varpi_\beta^\vee-\alpha^\vee-\beta^\vee\in Y.\]
Note that, in this case we have
\[\Omega=\left<\omega_\alpha\right>=\{1,\underbrace{\mathrm{t}_{\varpi_\alpha^\vee}(s_\alpha s_\beta s_\gamma)}_{\omega_\alpha},\underbrace{\mathrm{t}_{\varpi_\beta^\vee}(s_\beta s_\gamma s_\alpha s_\beta)}_{\omega_\beta},\underbrace{\mathrm{t}_{\varpi_\gamma^\vee}(s_\gamma s_\beta s_\alpha)}_{\omega_\gamma}\}\simeq \Z/4\Z\]
and 
\[\Omega_Y=\left<\omega_\beta\right>=\{1,\omega_\beta\}\le\Omega~~\text{and}~~\Omega/\Omega_Y\simeq\pi_1(SU_4(\C)/\{\pm1\})\simeq\Z/2\Z.\]
By the Proposition \ref{identlatticessubgroupsofOmega}, we see that $Y$ is the only $W$-lattice that stands strictly between $Q^\vee$ and $P^\vee$. Thus, the group $SU_4(\C)/\{\pm1\}$ is the only non-adjoint and non-simply-connected compact Lie group of type $A_3$.

Besides, observe that we have indeed
\[F_{P^\vee}=\conv\left(0,\frac{\varpi_\alpha}{2},\frac{\varpi_\beta}{2},\frac{\varpi_\gamma}{2},\frac{\varpi_\alpha+\varpi_\beta}{3},\frac{\varpi_\beta+\varpi_\gamma}{3},\frac{\varpi_\alpha+\varpi_\gamma}{3},\frac{\varpi_\alpha+\varpi_\beta+\varpi_\gamma}{4}\right).\]

\begin{figure}[h!]
\begin{tikzpicture}[x={(-1cm,0cm)},y={(0cm,-1cm)},z={(-5mm,2mm)},scale=2.25]
  \coordinate (z) at (0,0,0);

  \coordinate (a) at (1,-1,0);
  \coordinate (b) at (0,1,-1);
  \coordinate (c) at (-1,-1,0);
  \coordinate (ab) at (1,0,-1);
  \coordinate (bc) at (-1,0,-1);
  \coordinate (abc) at (0,-1,-1);
  \coordinate (ma) at (-1,1,0);
  \coordinate (mb) at (0,-1,1);
  \coordinate (mc) at (1,1,0);
  \coordinate (mab) at (-1,0,1);
  \coordinate (mbc) at (1,0,1);
  \coordinate (mabc) at (0,1,1);
  
  \coordinate (la) at (1/2,-1/2,-1/2);
  \coordinate (lb) at (0,0,-1);
  \coordinate (lc) at (-1/2,-1/2,-1/2);
  
  \coordinate (lad) at (1/4,-1/4,-1/4);
  \coordinate (lbd) at (0,0,-1/2);
  \coordinate (lcd) at (-1/4,-1/4,-1/4);
  \coordinate (lbct) at (-1/6,-1/6,-1/2);
  \coordinate (lact) at (0,-1/3,-1/3);
  \coordinate (labt) at (1/6,-1/6,-1/2);
  \coordinate (labcq) at (0,-1/4,-1/2);
  
  \coordinate (barlac) at (0,-1/2,-1/2);
  \coordinate (barlbc) at (-1/4,-1/4,-3/4);
  \coordinate (barlabc) at (0,-1/3,-2/3);
  
  \fill[fill=black] (z) circle (1pt);
  \fill[fill=black] (a) circle (1pt);
  \fill[fill=black] (b) circle (1pt);
  \fill[fill=black] (c) circle (1pt);
  \fill[fill=black] (ab) circle (1pt);
  \fill[fill=black] (bc) circle (1pt);
  \fill[fill=black] (abc) circle (1pt);
  \fill[fill=black] (ma) circle (1pt);
  \fill[fill=black] (mb) circle (1pt);
  \fill[fill=black] (mc) circle (1pt);
  \fill[fill=black] (mab) circle (1pt);
  \fill[fill=black] (mbc) circle (1pt);
  \fill[fill=black] (mabc) circle (1pt);
  
  \draw (a) node[above]{$\alpha$};
  \draw (b) node[below]{$\beta$};
  \draw (c) node[above]{$\gamma$};
  \draw (ab) node[left]{$\alpha+\beta$};
  \draw (bc) node[right]{$\beta+\gamma$};
  \draw (abc) node[above]{$\alpha_0$};
  \draw (la) node[above left]{$\varpi_\alpha$};
  \draw (lb) node[below]{$\varpi_\beta$};
  \draw (lc) node[right]{$\varpi_\gamma$};
  
  \draw (z)--(a);
  \draw (z)--(b);
  \draw (z)--(c);
  \draw (z)--(ab);
  \draw (z)--(bc);
  \draw (z)--(abc);
  \draw[dashed] (z)--(ma);
  \draw[dashed] (z)--(mb);
  \draw[dashed] (z)--(mc);
  \draw[dashed] (z)--(mab);
  \draw[dashed] (z)--(mbc);
  \draw[dashed] (z)--(mabc);
  
  \fill[fill=red] (la) circle (1pt);
  \fill[fill=red] (lb) circle (1pt);
  \fill[fill=red] (lc) circle (1pt);

  \fill[fill=blue,opacity=0.5] (z)--(la)--(lb)--(z);
  \fill[fill=blue,opacity=0.5] (z)--(lb)--(lc)--(z);
  \fill[fill=blue,opacity=0.5] (z)--(lc)--(la)--(z);
  \fill[fill=blue,opacity=0.5] (la)--(lb)--(lc)--(la);

  \draw (z)--(lc);
  \draw (z)--(lb);
  \draw (z)--(la);
  \draw (la)--(lb);
  \draw (lb)--(lc);
  \draw (lc)--(la);
  
  \coordinate (x) at (1.3,-1.3,-1.3);
  \coordinate (y) at (0,0,-2.5);
  \coordinate (t) at (-1.3,-1.3,-1.3);
  
  \fill[fill=gray,opacity=0.2] (z)--(x)--(y);
  \fill[fill=gray,opacity=0.2] (z)--(y)--(t);
  \fill[fill=gray,opacity=0.2] (z)--(t)--(x);
  \draw[dotted] (z)--(x);
  \draw[dotted] (z)--(y);
  \draw[dotted] (z)--(t);
\end{tikzpicture}
\caption{The fundamental alcove ${\mathcal{A}_0}=\conv(0,\varpi_\alpha,\varpi_\beta,\varpi_\gamma)$ in type $A_3$.}\label{fig:A3}
\end{figure}
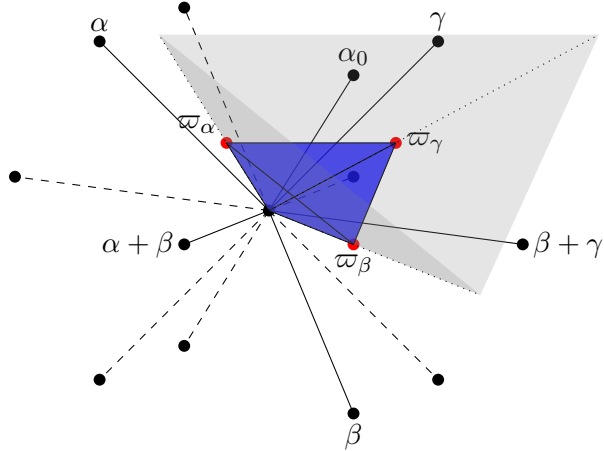

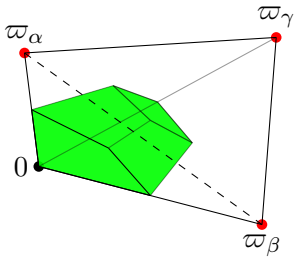
\begin{figure}[h!]
\begin{tikzpicture}[x={(-1cm,0cm)},y={(0cm,-1cm)},z={(-5mm,2mm)},scale=4,rotate around y=15]
  \coordinate (z) at (0,0,0);

  \coordinate (a) at (1,-1,0);
  \coordinate (b) at (0,1,-1);
  \coordinate (c) at (-1,-1,0);
  \coordinate (ab) at (1,0,-1);
  \coordinate (bc) at (-1,0,-1);
  \coordinate (abc) at (0,-1,-1);
  \coordinate (ma) at (-1,1,0);
  \coordinate (mb) at (0,-1,1);
  \coordinate (mc) at (1,1,0);
  \coordinate (mab) at (-1,0,1);
  \coordinate (mbc) at (1,0,1);
  \coordinate (mabc) at (0,1,1);
  
  \coordinate (la) at (1/2,-1/2,-1/2);
  \coordinate (lb) at (0,0,-1);
  \coordinate (lc) at (-1/2,-1/2,-1/2);
  
  \coordinate (lad) at (1/4,-1/4,-1/4);
  \coordinate (lbd) at (0,0,-1/2);
  \coordinate (lcd) at (-1/4,-1/4,-1/4);
  \coordinate (lbct) at (-1/6,-1/6,-1/2);
  \coordinate (lact) at (0,-1/3,-1/3);
  \coordinate (labt) at (1/6,-1/6,-1/2);
  \coordinate (labcq) at (0,-1/4,-1/2);
  
  \coordinate (barlac) at (0,-1/2,-1/2);
  \coordinate (barlbc) at (-1/4,-1/4,-3/4);
  \coordinate (barlabc) at (0,-1/3,-2/3);
  
  \fill[fill=black] (z) circle (0.5pt);
  
  \draw (la) node[above]{$\varpi_\alpha$};
  \draw (lb) node[below]{$\varpi_\beta$};
  \draw (lc) node[above]{$\varpi_\gamma$};
  \draw (z) node[left]{$0$};
  
  \fill[fill=red] (la) circle (0.5pt);
  \fill[fill=red] (lb) circle (0.5pt);
  \fill[fill=red] (lc) circle (0.5pt);
  
  \fill[fill=green,opacity=0.7] (z)--(lad)--(labt)--(lbd)--(z);
  \fill[fill=green,opacity=0.7] (z)--(lbd)--(lbct)--(lcd)--(z);
  \fill[fill=green,opacity=0.7] (z)--(lcd)--(lact)--(lad)--(z);
  \fill[fill=green,opacity=0.7] (lad)--(labt)--(labcq)--(lact)--(lad);
  \fill[fill=green,opacity=0.7] (lbd)--(lbct)--(labcq)--(labt)--(lbd);
  \fill[fill=green,opacity=0.7] (lcd)--(lact)--(labcq)--(lbct)--(lcd);
  %\draw (z)--(labt);
  %\draw (z)--(lbct);
  %\draw (z)--(lact);
  \draw (lad)--(labt);
  \draw (lbd)--(labt);
  \draw[opacity=0.4] (lad)--(lact);
  \draw[opacity=0.4] (lcd)--(lact);
  \draw[opacity=0.4] (lcd)--(lbct);
  \draw[opacity=0.4] (lbd)--(lbct);
  \draw[opacity=0.4] (labt)--(labcq);
  \draw[opacity=0.4] (lbct)--(labcq);
  \draw[opacity=0.4] (lact)--(labcq);
  \draw (z)--(la);
  \draw (z)--(lb);
  \draw[opacity=0.4] (z)--(lc);
  \draw[dashed] (la)--(lb);
  \draw (lb)--(lc);
  \draw (lc)--(la);
  \draw (z)--(lad);
  \draw (z)--(lbd);
  %\draw[opacity=0.5] (z)--(lcd);
  \draw[opacity=0.4] (lact)--(labcq);
  \draw[opacity=0.4] (lbct)--(labcq);
  \draw[opacity=0.4] (lcd)--(lbct);
  \draw[opacity=0.4] (lcd)--(lact);
  
\end{tikzpicture}
\caption{The fundamental domain $F_{P^\vee}$ for $\Omega\simeq \Z/4\Z$ in ${\mathcal{A}_0}$.}
\label{otherfunddom}
\end{figure}

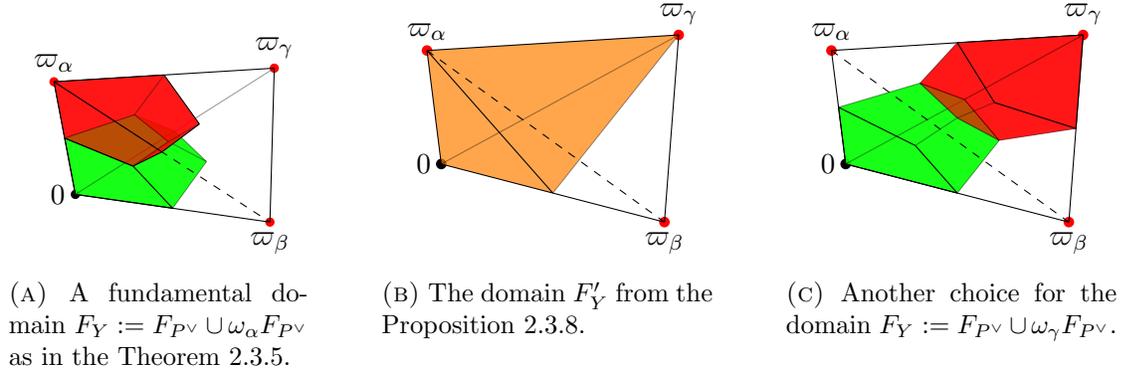
\begin{figure}[h!]
\subcaptionbox{A fundamental domain $F_Y:=F_{P^\vee}\cup \omega_\alpha F_{P^\vee}$ as in the Theorem \ref{funddomintermediatelattice}.}
{
\begin{tikzpicture}[x={(-1cm,0cm)},y={(0cm,-1cm)},z={(-5mm,2mm)},scale=3.5,rotate around y=15,rotate around x=-5]
  \coordinate (z) at (0,0,0);

  \coordinate (a) at (1,-1,0);
  \coordinate (b) at (0,1,-1);
  \coordinate (c) at (-1,-1,0);
  \coordinate (ab) at (1,0,-1);
  \coordinate (bc) at (-1,0,-1);
  \coordinate (abc) at (0,-1,-1);
  \coordinate (ma) at (-1,1,0);
  \coordinate (mb) at (0,-1,1);
  \coordinate (mc) at (1,1,0);
  \coordinate (mab) at (-1,0,1);
  \coordinate (mbc) at (1,0,1);
  \coordinate (mabc) at (0,1,1);
  
  \coordinate (la) at (1/2,-1/2,-1/2);
  \coordinate (lb) at (0,0,-1);
  \coordinate (lc) at (-1/2,-1/2,-1/2);
  
  \coordinate (lad) at (1/4,-1/4,-1/4);
  \coordinate (lbd) at (0,0,-1/2);
  \coordinate (lcd) at (-1/4,-1/4,-1/4);
  \coordinate (lbct) at (-1/6,-1/6,-1/2);
  \coordinate (lact) at (0,-1/3,-1/3);
  \coordinate (labt) at (1/6,-1/6,-1/2);
  \coordinate (labcq) at (0,-1/4,-1/2);
  
  \coordinate (barlab) at (1/4,-1/4,-3/4);
  \coordinate (barlac) at (0,-1/2,-1/2);
  \coordinate (barlbc) at (-1/4,-1/4,-3/4);
  \coordinate (barlabc) at (0,-1/3,-2/3);
  
  \fill[fill=black] (z) circle (0.5pt);
  
  \draw (la) node[above]{$\varpi_\alpha$};
  \draw (lb) node[below]{$\varpi_\beta$};
  \draw (lc) node[above]{$\varpi_\gamma$};
  \draw (z) node[left]{$0$};
  
  \fill[fill=red] (la) circle (0.5pt);
  \fill[fill=red] (lb) circle (0.5pt);
  \fill[fill=red] (lc) circle (0.5pt);
  
  \fill[fill=green,opacity=0.7] (z)--(lad)--(labt)--(lbd)--(z);
  \fill[fill=green,opacity=0.7] (z)--(lbd)--(lbct)--(lcd)--(z);
  \fill[fill=green,opacity=0.7] (z)--(lcd)--(lact)--(lad)--(z);
  \fill[fill=green,opacity=0.7] (lad)--(labt)--(labcq)--(lact)--(lad);
  \fill[fill=green,opacity=0.7] (lbd)--(lbct)--(labcq)--(labt)--(lbd);
  \fill[fill=green,opacity=0.7] (lcd)--(lact)--(labcq)--(lbct)--(lcd);
  %\draw (z)--(labt);
  %\draw (z)--(lbct);
  %\draw (z)--(lact);
  \draw (lad)--(labt);
  \draw (lbd)--(labt);
  \draw[opacity=0.4] (lad)--(lact);
  \draw[opacity=0.4] (lcd)--(lact);
  \draw[opacity=0.4] (lcd)--(lbct);
  \draw[opacity=0.4] (lbd)--(lbct);
  \draw[opacity=0.4] (labt)--(labcq);
  \draw[opacity=0.4] (lbct)--(labcq);
  \draw[opacity=0.4] (lact)--(labcq);
  \draw (z)--(la);
  \draw (z)--(lb);
  \draw[opacity=0.4] (z)--(lc);
  \draw[dashed] (la)--(lb);
  \draw (lb)--(lc);
  \draw (lc)--(la);
  \draw (z)--(lad);
  \draw (z)--(lbd);
  %\draw[opacity=0.5] (z)--(lcd);
  
  \fill[fill=red,opacity=0.7] (la)--(lad)--(lact)--(barlac)--(la);
  \fill[fill=red,opacity=0.7] (la)--(lad)--(labt)--(barlab)--(la);
  \fill[fill=red,opacity=0.7] (la)--(barlac)--(barlabc)--(barlab)--(la);
  \fill[fill=red,opacity=0.7] (barlac)--(lact)--(labcq)--(barlabc)--(barlac);
  \fill[fill=red,opacity=0.7] (barlab)--(labt)--(labcq)--(barlabc)--(barlab);
  \draw (la)--(lad);
  \draw (la)--(barlab);
  \draw (la)--(barlac);
  %\draw (la)--(lact);
  %\draw (la)--(labt);
  \draw[opacity=0.4] (lact)--(barlac);
  \draw (labt)--(barlab);
  \draw[opacity=0.4] (lact)--(labcq);
  \draw[opacity=0.4] (labt)--(labcq);
  %\draw (la)--(barlabc);
  \draw (barlab)--(barlabc);
  \draw (barlac)--(barlabc);
  \draw[opacity=0.4] (barlabc)--(labcq);
  \draw (lad)--(labt);
  \draw[opacity=0.4] (lad)--(lact);  
\end{tikzpicture}
}
\hspace{7mm}
\subcaptionbox{The domain $F_Y'$ from the Proposition \ref{anotherfunddomintermediatecases}.}
{
\begin{tikzpicture}[x={(-1cm,0cm)},y={(0cm,-1cm)},z={(-5mm,2mm)},scale=4,rotate around y=15]
  \coordinate (z) at (0,0,0);

  \coordinate (a) at (1,-1,0);
  \coordinate (b) at (0,1,-1);
  \coordinate (c) at (-1,-1,0);
  \coordinate (ab) at (1,0,-1);
  \coordinate (bc) at (-1,0,-1);
  \coordinate (abc) at (0,-1,-1);
  \coordinate (ma) at (-1,1,0);
  \coordinate (mb) at (0,-1,1);
  \coordinate (mc) at (1,1,0);
  \coordinate (mab) at (-1,0,1);
  \coordinate (mbc) at (1,0,1);
  \coordinate (mabc) at (0,1,1);
  
  \coordinate (la) at (1/2,-1/2,-1/2);
  \coordinate (lb) at (0,0,-1);
  \coordinate (lc) at (-1/2,-1/2,-1/2);
  
  \coordinate (lad) at (1/4,-1/4,-1/4);
  \coordinate (lbd) at (0,0,-1/2);
  \coordinate (lcd) at (-1/4,-1/4,-1/4);
  \coordinate (lbct) at (-1/6,-1/6,-1/2);
  \coordinate (lact) at (0,-1/3,-1/3);
  \coordinate (labt) at (1/6,-1/6,-1/2);
  \coordinate (labcq) at (0,-1/4,-1/2);
  
  \coordinate (barlac) at (0,-1/2,-1/2);
  \coordinate (barlbc) at (-1/4,-1/4,-3/4);
  \coordinate (barlabc) at (0,-1/3,-2/3);
  
  \fill[fill=black] (z) circle (0.5pt);
  
  \draw (la) node[above]{$\varpi_\alpha$};
  \draw (lb) node[below]{$\varpi_\beta$};
  \draw (lc) node[above]{$\varpi_\gamma$};
  \draw (z) node[left]{$0$};
  
  \fill[fill=red] (la) circle (0.5pt);
  \fill[fill=red] (lb) circle (0.5pt);
  \fill[fill=red] (lc) circle (0.5pt);
  
  \fill[fill=orange,opacity=0.7] (z)--(lbd)--(lc)--(la)--(z);
  \draw (la)--(lbd);
  \draw[opacity=0.4] (lc)--(lbd);
  \draw (la)--(lc);
  \draw (z)--(lb);
  \draw (z)--(la);
  \draw[opacity=0.4] (z)--(lc);
  \draw[dashed] (la)--(lb);
  \draw (lb)--(lc);
\end{tikzpicture}
}
\hspace{7mm}
\subcaptionbox{Another choice for the domain $F_Y:=F_{P^\vee}\cup \omega_\gamma F_{P^\vee}$.}
{
\begin{tikzpicture}[x={(-1cm,0cm)},y={(0cm,-1cm)},z={(-5mm,2mm)},scale=4,rotate around y=15]
  \coordinate (z) at (0,0,0);

  \coordinate (a) at (1,-1,0);
  \coordinate (b) at (0,1,-1);
  \coordinate (c) at (-1,-1,0);
  \coordinate (ab) at (1,0,-1);
  \coordinate (bc) at (-1,0,-1);
  \coordinate (abc) at (0,-1,-1);
  \coordinate (ma) at (-1,1,0);
  \coordinate (mb) at (0,-1,1);
  \coordinate (mc) at (1,1,0);
  \coordinate (mab) at (-1,0,1);
  \coordinate (mbc) at (1,0,1);
  \coordinate (mabc) at (0,1,1);
  
  \coordinate (la) at (1/2,-1/2,-1/2);
  \coordinate (lb) at (0,0,-1);
  \coordinate (lc) at (-1/2,-1/2,-1/2);
  
  \coordinate (lad) at (1/4,-1/4,-1/4);
  \coordinate (lbd) at (0,0,-1/2);
  \coordinate (lcd) at (-1/4,-1/4,-1/4);
  \coordinate (lbct) at (-1/6,-1/6,-1/2);
  \coordinate (lact) at (0,-1/3,-1/3);
  \coordinate (labt) at (1/6,-1/6,-1/2);
  \coordinate (labcq) at (0,-1/4,-1/2);
  
  \coordinate (barlac) at (0,-1/2,-1/2);
  \coordinate (barlbc) at (-1/4,-1/4,-3/4);
  \coordinate (barlabc) at (0,-1/3,-2/3);
  
  \fill[fill=black] (z) circle (0.5pt);
  
  \draw (la) node[above]{$\varpi_\alpha$};
  \draw (lb) node[below]{$\varpi_\beta$};
  \draw (lc) node[above]{$\varpi_\gamma$};
  \draw (z) node[left]{$0$};
  
  \fill[fill=red] (la) circle (0.5pt);
  \fill[fill=red] (lb) circle (0.5pt);
  \fill[fill=red] (lc) circle (0.5pt);
  
  \fill[fill=green,opacity=0.7] (z)--(lad)--(labt)--(lbd)--(z);
  \fill[fill=green,opacity=0.7] (z)--(lbd)--(lbct)--(lcd)--(z);
  \fill[fill=green,opacity=0.7] (z)--(lcd)--(lact)--(lad)--(z);
  \fill[fill=green,opacity=0.7] (lad)--(labt)--(labcq)--(lact)--(lad);
  \fill[fill=green,opacity=0.7] (lbd)--(lbct)--(labcq)--(labt)--(lbd);
  \fill[fill=green,opacity=0.7] (lcd)--(lact)--(labcq)--(lbct)--(lcd);
  %\draw (z)--(labt);
  %\draw (z)--(lbct);
  %\draw (z)--(lact);
  \draw (lad)--(labt);
  \draw (lbd)--(labt);
  \draw[opacity=0.4] (lad)--(lact);
  \draw (lcd)--(lact);
  \draw (lcd)--(lbct);
  \draw[opacity=0.4] (lbd)--(lbct);
  \draw[opacity=0.4] (labt)--(labcq);
  \draw[opacity=0.4] (lbct)--(labcq);
  \draw[opacity=0.4] (lact)--(labcq);
  \draw (z)--(la);
  \draw (z)--(lb);
  \draw[opacity=0.4] (z)--(lc);
  \draw[dashed] (la)--(lb);
  \draw (lb)--(lc);
  \draw (lc)--(la);
  \draw (z)--(lad);
  \draw (z)--(lbd);
  %\draw (z)--(lcd);
  
  \fill[fill=red,opacity=0.7] (lc)--(lcd)--(lact)--(barlac)--(lc);
  \fill[fill=red,opacity=0.7] (lc)--(lcd)--(lbct)--(barlbc)--(lc);
  \fill[fill=red,opacity=0.7] (lc)--(barlac)--(barlabc)--(barlbc)--(lc);
  \fill[fill=red,opacity=0.7] (barlac)--(lact)--(labcq)--(barlabc)--(barlac);
  \fill[fill=red,opacity=0.7] (barlbc)--(lbct)--(labcq)--(barlabc)--(barlbc);
  \draw[opacity=0.4] (lc)--(lcd);
  \draw (lc)--(barlbc);
  \draw (lc)--(barlac);
  %\draw (lc)--(lact);
  %\draw (lc)--(lbct);
  \draw[opacity=0.4] (lact)--(barlac);
  \draw[opacity=0.4] (lbct)--(barlbc);
  \draw[opacity=0.4] (lact)--(labcq);
  \draw[opacity=0.4] (lbct)--(labcq);
  %\draw (lc)--(barlabc);
  \draw (barlbc)--(barlabc);
  \draw (barlac)--(barlabc);
  \draw[opacity=0.4] (barlabc)--(labcq);
  \draw[opacity=0.4] (lcd)--(lbct);
  \draw[opacity=0.4] (lcd)--(lact);
  
\end{tikzpicture}
}
\caption{Fundamental domains $F_Y$ and $F_Y'$ inside ${\mathcal{A}_0}$ in type $A_3$.}\label{fig:interA3}
\end{figure}
\end{exemple}

\printbibliography
%\bibliographystyle{alpha}
%\bibliography{references}

\end{document}